\documentclass[11pt]{article}

\usepackage[utf8]{inputenc}
\usepackage[T1]{fontenc}
\usepackage{lmodern} 
\usepackage{microtype}
\usepackage{amsmath,amssymb,amsthm,mathtools,stmaryrd}
\usepackage{enumitem}
\usepackage{hyperref}
\usepackage[titletoc,title]{appendix}
\usepackage[nameinlink,capitalise]{cleveref}
\usepackage{mathrsfs}
\usepackage{booktabs}
\usepackage{tabularx}
\usepackage{array}
\usepackage{float}
\usepackage{needspace}
\newcolumntype{Y}{>{\raggedright\arraybackslash}X}
\newcolumntype{P}[1]{>{\raggedright\arraybackslash}p{#1}}
\newcolumntype{L}[1]{>{\raggedright\arraybackslash}p{#1}}
\newcolumntype{L}{>{\raggedright\arraybackslash}X}
\newcolumntype{C}{>{\centering\arraybackslash}p{0.20\textwidth}}
\newcolumntype{M}{>{\raggedright\arraybackslash}p{0.28\textwidth}}

\DeclareMathOperator{\Fix}{Fix}

\theoremstyle{plain}
\newtheorem{theorem}{Theorem}[section]
\newtheorem{lemma}[theorem]{Lemma}
\newtheorem{corollary}[theorem]{Corollary}
\newtheorem{proposition}[theorem]{Proposition}
\newtheorem{construction}[theorem]{Construction}
\theoremstyle{remark}
\newtheorem{remark}[theorem]{Remark}
\theoremstyle{definition}
\newtheorem{definition}[theorem]{Definition}
\newtheorem{example}[theorem]{Example}
\newtheorem{claim}[theorem]{Claim}

\crefname{theorem}{theorem}{theorems}
\Crefname{theorem}{Theorem}{Theorems}
\crefname{lemma}{lemma}{lemmas}
\Crefname{lemma}{Lemma}{Lemmas}
\crefname{corollary}{corollary}{corollaries}
\Crefname{corollary}{Corollary}{Corollaries}
\crefname{remark}{remark}{remarks}
\Crefname{remark}{Remark}{Remarks}
\crefname{example}{example}{examples}
\Crefname{example}{Example}{Examples}
\crefname{definition}{definition}{definitions}
\Crefname{definition}{Definition}{Definitions}
\crefname{proposition}{proposition}{propositions}
\Crefname{proposition}{Proposition}{Propositions}
\crefname{claim}{claim}{claims}
\Crefname{claim}{Claim}{Claims}

\newcommand{\Boxsym}{\mathop{\Box_{\mathrm{sym}}}}

\newcommand{\HS}{\mathsf{HS}}
\newcommand{\IS}{\mathsf{IS}}
\newcommand{\Aut}{\mathrm{Aut}}
\newcommand{\Sym}{\mathrm{Sym}}
\newcommand{\Add}{\mathrm{Add}}
\newcommand{\forces}{\Vdash}

\newcommand{\ZF}{\mathsf{ZF}}
\newcommand{\ZFC}{\mathsf{ZFC}}

\newcommand{\Sel}{\mathrm{Sel}}
\newcommand{\Pblk}{\mathcal{P}}
\newcommand{\Qblk}{\mathcal{Q}}
\newcommand{\Rblk}{\mathcal{R}}
\newcommand{\forcesIS}{\Vdash^{\IS}}
\newcommand{\FSI}{finite symmetry-preserving iteration}
\newcommand{\FSIs}{finite symmetry-preserving iterations}
\newcommand{\symext}{symmetric extension}         

\providecommand{\Nessym}{\Box_{\mathrm{sym}}}      
\providecommand{\Possym}{\Diamond_{\mathrm{sym}}}  

\title{The Modal Logic of Finitely Symmetry-Preserving Iterated Extensions is Exactly S4}
\author{Frank Gilson}
\date{June 21, 2026}

\begin{document}
	\maketitle
	
	\begin{abstract}
		We determine the $\ZF$-provable modal logic of the modality $\Boxsym$, where
		$\Boxsym\varphi$ means ``$\varphi$ holds in every \emph{\FSI},'' i.e.,
		along finite, symmetry-preserving iterations of the symmetric method. We prove that the
		exact logic is \textbf{S4}. Soundness (axioms T and 4) follows from reflexivity and transitivity of
		the underlying accessibility relation. Exactness is obtained by (i) a non-amalgamation
		lemma showing that axiom $\mathbf{.2}$ fails for \FSIs \ (no common \FSI\ above the parent; see \Cref{lem:non-amalgamation}), and (ii) a
		$p$-morphism/finite-frame realization producing, within $\ZF$, models whose $\Boxsym$-theory
		matches any finite reflexive–transitive frame.
	\end{abstract}
	
	\section{Introduction}
	\paragraph{Standing background.}
	We work in a ground $V\models\ZFC$ as a metatheory for the forcing and symmetry constructions; generic filters are assumed to exist externally. The forcing $\Add(\omega,\omega)$ is homogeneous, and we use its homogeneity explicitly when passing from automorphism-invariance to the evaluation notion of finite support.
	
	\emph{Metatheoretic convention.} Completeness is established externally 
	in $\mathsf{ZFC}$: if $\alpha\notin\mathrm{S4}$, we (in the metatheory) 
	build a generic extension and take a symmetric \emph{ZF} submodel $N$, 
	which is a transitive model of $\mathsf{ZF}$ in $V$. The operator 
	$\Boxsym$ is interpreted externally: $N\models\neg\,\Boxsym\alpha$ means 
	that there exists in $V$ a finite \FSI{} $N'$ of $N$ (also a transitive 
	model of $\mathsf{ZF}$ in $V$) with $N'\models\neg\alpha$. Since $\Boxsym$ 
	quantifies over generics existing in $V$ but not in $N$, it is not 
	$\mathsf{ZF}$-definable inside $N$; all such quantification is metatheoretic. 
	All reasoning \emph{inside} the symmetric models is in $\mathsf{ZF}$.
	
	Symmetric extensions are the standard tool for producing models of $\ZF$ with various failures of choice. They sit strictly between ground $V$ and a generic extension $V[G]$ by modding out names with respect to a group of automorphisms and a normal filter of subgroups. This paper studies the modal operator
	\[
	\Boxsym\varphi \quad:\Longleftrightarrow\quad \text{``$\varphi$ holds in every \FSI''}.
	\]
	\paragraph{Possible vs.\ necessary.}
	We read $\Nessym\varphi$ as “$\varphi$ is \emph{necessary} under \FSIs{}”
	(i.e., true in every \FSI), and we read
	$\Possym\varphi$ as “$\varphi$ is \emph{possible} under \FSIs”
	(i.e., true in some \FSI). We treat $\Possym$ as a
	\emph{derived} operator by duality:
	\[
	\Possym\varphi \;:=\; \neg\,\Nessym\neg\varphi.
	\]
	All results below are stated for $\Nessym$; by duality they immediately transfer to $\Possym$.
	
	In particular, because the accessibility relation arising from finite,
	symmetry-preserving iterations is not directed (Section 6), the axiom
	\(\mathbf{.2}\) fails in this setting. We prove below that the ZF-provable
	valid principles of \(\Boxsym\) are exactly \textbf{S4}.
	
	\paragraph{Scope and relation to prior work.}
	Throughout, $\Boxsym$ quantifies over \emph{\FSIs{}} (“true in every \FSI”). By contrast, Block--L\"owe\footnote{Block--L\"owe quantify over \emph{all symmetric submodels of forcing extensions} $V[G]$. By Proposition~\ref{prop:full-filter}, taking the full normal filter (generated by $\{1\}$) yields the \emph{entire} forcing extension $V^P$ as a symmetric extension. Hence their class includes all forcing extensions and is directed under products: given $V^P$ and $V^Q$ there is a common extension $V^{P\times Q}$ (and more generally, see Grigorieff \cite{Grigorieff1975} on intermediate submodels in product extensions). Directedness validates axiom~$\mathbf{.2}$, so the resulting modal logic is $\mathrm{S4.2}$, as in the forcing case (cf.\ Hamkins--L\"owe \cite{HamkinsLowe2008}). Our setting restricts to \emph{finite, symmetry-preserving iterations} (Def.~\ref{def:SPI}) with specific normal filters; the resulting class is not directed (Lemma~\ref{lem:sibling-incompatibility}), so $\mathbf{.2}$ fails and the exact logic drops to $\mathrm{S4}$.}
	study a translation restoring directedness and validating $\mathbf{.2}$, hence \textbf{S4.2} \cite[Thm.~18]{BlockLoewe2015}. Our non-amalgamation (Lemma~\ref{lem:sibling-incompatibility})
	shows that directedness fails inside the finite, symmetry-preserving iteration regime we study.
	
	A recent note of Duncan \cite{Duncan2026} studies the modal logic of the 
	full symmetric extension multiverse (in the sense of Block--L\"owe), 
	proving that independent systems of choice-switches cannot be independent 
	from standard cardinal-property buttons. Duncan works with the unrestricted 
	class of all symmetric extensions of models of $\mathsf{ZFC}$, for which the 
	modal logic remains $\mathrm{S4.2}$. Our result is complementary: by 
	restricting to \emph{finite symmetry-preserving iterations} with the 
	productive filter discipline, the resulting class loses directedness 
	(Lemma~\ref{lem:sibling-incompatibility}), and the exact modal logic drops 
	to $\mathrm{S4}$.
	
	\paragraph{Historical context.}
	The symmetric method traces back to early work on forcing and failures of choice; see Feferman~\cite{Feferman1964}. 
	Intermediate submodels of generic extensions were analyzed by Grigorieff~\cite{Grigorieff1975}, which conceptually underlies using symmetric submodels as definable intermediates between $V$ and $V[G]$.
	
	\section{Modal and proof-theoretic preliminaries}
	
	We work with the standard propositional modal language over a base theory of $\ZF$; $\textbf{S4}$ denotes the normal modal logic with $\neg,\wedge,\to,\Box,\Diamond$. \textbf{Soundness, completeness, finite model property} (FMP), and $p$-morphism facts are used in the usual way; see \cite[\S2.3]{BlackburnDeRijkeVenema} or \cite[\S5.3]{ChagrovZakharyaschev}. Throughout, $\Add(\omega,\omega)$ is Cohen forcing. (For a quick-reference table of recurring symbols such as $\Add(\omega,\omega)$, $IS$, $\HS_{\mathcal F}$, $J_d$, and $G_{J_d}$, see Appendix~\ref{app:notation}, Table~\ref{tab:notation}.)
	
	\begin{definition}[Cohen forcing $\Add(\omega,\omega)$]\label[definition]{def:Add-omega-omega}
		Conditions are finite partial functions $p:\omega\times\omega\to 2$ ordered by reverse inclusion ($p\le q$ iff $p\supseteq q$).
		For $n\in\omega$, let $c_n$ denote the $n$th Cohen real added by the generic filter. Thus $\Add(\omega,\omega)$ adds a sequence
		$\langle c_n:n\in\omega\rangle$ of Cohen reals.
	\end{definition}
	
	\paragraph{Homogeneity.}
	We recall that $\Add(\omega,\omega)$ is homogeneous; see, e.g., \cite[Lemma~14.17]{Jech}.
	We use this to pass from automorphism-invariance to agreement of evaluations in \Cref{lem:finite-support} (see Lemmas~4.3–4.4 and Corollary~4.5).
	
	\begin{definition}[Finite support for evaluation]\label[definition]{def:finite-support}
	Work in $V$ and let $\mathbb P=\Add(\omega,\omega)$ with coordinates
	$\langle c_n:n\in\omega\rangle$. A $\mathbb P$-name $\dot x$ has \emph{finite support}
	if there exists a finite set $F\subseteq\omega$ such that for every pair of
	$V$-generics $G,H\subseteq\mathbb P$ with $G\!\restriction F=H\!\restriction F$
	we have $\dot x^G=\dot x^H$. Any such $F$ is called a (evaluation) \emph{support} for $\dot x$.
	
	Equivalently, there is a finite $F\subseteq\omega$ such that every finitary permutation
	$\pi$ of $\omega$ with $\operatorname{supp}(\pi)\cap F=\varnothing$ fixes $\dot x$
	(i.e., $\pi\cdot\dot x=\dot x$). In this case we also say that $\dot x$ is \emph{fixed off $F$}.
	\end{definition}
	
	\begin{remark}[Standard usage]
		This matches the classical ``$\mathrm{Fix}(E)$ support'' perspective in permutation models.
	\end{remark}
	
	\begin{remark}[Meaning of “ZF-provably valid”]\label{rem:ZFprovable}
		By “ZF-provably valid” we mean: ZF proves that the modal axiom holds under the
		$\Box_{\mathrm{sym}}$ semantics (i.e., in every finite, symmetry-preserving iteration).
		Equivalently, if $\alpha\notin\mathrm{S4}$ then (in ZFC) there exists a model of ZF
		refuting $\Box_{\mathrm{sym}}\alpha$.
	\end{remark}
	
	\begin{remark}[Support calculus]\label[remark]{rem:support-calculus}
		If a name $\dot x$ has finite support $F$ (in the evaluation sense of \Cref{def:finite-support}), then:
		\begin{enumerate}[leftmargin=2em]
			\item (\emph{Conjugation}) If $\pi$ is an automorphism decided in the stage below, then $\pi\cdot\dot x$ has support $\pi[F]$.
			\item (\emph{Finite intersections}) If $\dot x,\dot y$ have supports $F_x,F_y$, then any pairing/union formed from them has support contained in $F_x\cup F_y$, and any property depending on both is decided by $G\!\restriction(F_x\cup F_y)$.
			\item (\emph{Excellent supports}) In productive steps, excellent supports exist and are preserved under finite intersections (see \cite[§4]{Karagila2019}).
		\end{enumerate}
		This is the support bookkeeping used implicitly in §§3–5.
	\end{remark}
	
	\begin{remark}[Evaluation vs.\ symmetry support]\label[remark]{rem:eval-vs-sym}
		We distinguish the evaluation notion of finite support (agreement of generics on $F$ implies equal evaluations, Definition~\ref{def:finite-support}) from the automorphism–invariance notion (fixed by a tail stabilizer). Cohen homogeneity bridges the two (\Cref{lem:finite-support}); we use the evaluation phrasing in \S\ref{sec:partition-construction} when describing symmetric systems and finite iterations. Automorphism support is syntactic, while evaluation support is semantic.
	\end{remark}
	
	\begin{remark}[Terminology]\label[remark]{rem:terminology}
	When we speak of a \emph{finite symmetry-preserving iteration}, we mean the notion formalized in Definition~\ref{def:SPI}: a finite productive iteration (PI–1 through PI–4 in \cite{Karagila2019}) obeying our fixed block-partition discipline.
	
	We use \emph{\symext} for the one-step construction of a symmetric ZF submodel arising from a single symmetric system. 
	We use \emph{\FSI} (abbrev.\ FSI) for any finite iteration over $V$ composed of our productive symmetric steps (PI–1–PI–4). 
	Accordingly, phrases like “no common extension” always mean “no common \FSI{} above the parent” unless explicitly stated otherwise.
	\end{remark}
	
	\section{Symmetric systems and finite iterations}\label{sec:symmetric-iterations}
	\paragraph{Notation discipline.}
	Stage-indexed objects carry the stage as a subscript: $\mathbb P_\alpha$ denotes the $\alpha$th iterand, and ``$\mathbb P_\alpha$-name'' means a name built over the ground appropriate to stage $\alpha$. 
	``Ground name'' always refers to the earlier stage in a current factorization. We write $\HS_F$ for the hereditarily symmetric \emph{names} associated to a normal filter $F$; when needed we use $\HS_\alpha$ or $\HS_{F_\alpha}$ to indicate the stage. For a $V$-generic $H\subseteq\mathbb P$, we write $IS=\HS_F^{\,H}$ for the interpreted symmetric \emph{model}\footnote{A compact notation table is in Appendix~\ref{app:notation} (Table~\ref{tab:notation}).}.
	
	A \emph{symmetric system} is $(\mathbb{P},G,\mathcal{F})$ with $\mathbb P$ a notion of forcing, $G\leq \Aut(\mathbb P)$ a group of automorphisms, and $\mathcal F$ a normal filter of subgroups of $G$. The hereditarily symmetric class of names is $\HS_{\mathcal F}$. For any $V$-generic $H\subseteq\mathbb P$, its interpretation $IS=\HS_{\mathcal F}^{\,H}$ is a transitive model with $V\subseteq IS\subseteq V^{\mathbb P}$.
	
	\begin{proposition}[Forcing as a special case of symmetry]\label[proposition]{prop:full-filter}
		Let $(\Pblk,G,\mathcal F_{\mathrm{full}})$ be a symmetric system where
		$\mathcal F_{\mathrm{full}}$ is the \emph{full} normal filter of subgroups of $G$
		(i.e., the upward-closed, conjugation-closed family of \emph{all} subgroups of $G$,
		generated by the trivial subgroup $\{1\}$).
		Then every $\Pblk$-name is hereditarily symmetric and hence $IS^{H}_{\mathcal F_{\mathrm{full}}}(\Pblk)=V^\Pblk$ for any $V$-generic $H\subseteq \Pblk$.
		In particular, every forcing extension is (canonically) a symmetric extension.
	\end{proposition}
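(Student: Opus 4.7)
The plan is to observe that the full normal filter imposes no non-trivial constraint, so that the notion of hereditarily symmetric name degenerates to that of arbitrary $\Pblk$-name. First I would verify that $\mathcal{F}_{\mathrm{full}}$—the collection of all subgroups of $G$—is genuinely a normal filter: it is upward-closed and conjugation-closed trivially, and closed under finite intersection because the intersection of two subgroups of $G$ is itself a subgroup. Hence $(\Pblk, G, \mathcal{F}_{\mathrm{full}})$ is a legitimate symmetric system in the sense of the preceding discussion.

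Second, I would unpack the symmetry condition. A $\Pblk$-name $\dot{x}$ is \emph{symmetric} with respect to $\mathcal{F}$ precisely when its stabilizer $\mathrm{Stab}_G(\dot{x}) := \{g \in G : g \cdot \dot{x} = \dot{x}\}$ belongs to $\mathcal{F}$. Since this stabilizer is always a subgroup of $G$, and $\mathcal{F}_{\mathrm{full}}$ contains every subgroup, every $\Pblk$-name is automatically symmetric under the full filter. I would then conclude that every name is \emph{hereditarily} symmetric by $\in$-induction on names (equivalently, by induction on name rank). The base case handles canonical names $\check{a}$ for $a \in V$, which are fixed pointwise by all of $G$ and are built from lower-rank canonical names. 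For the inductive step, if $\dot{x} = \{(\dot{y}, p) : \ldots\}$ and each $\dot{y}$ is hereditarily symmetric by IH, then since $\dot{x}$ itself is symmetric by the above, it lies in $\HS_{\mathcal{F}_{\mathrm{full}}}$. Thus $\HS_{\mathcal{F}_{\mathrm{full}}} = V^{\Pblk}$, and interpreting at any $V$-generic $H \subseteq \Pblk$ yields $\IS^{H}_{\mathcal{F}_{\mathrm{full}}}(\Pblk) = V[H] = V^{\Pblk}$, as desired.

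There is essentially no obstacle here; the proposition is a definitional unpacking. The one point worth flagging is that the definition of normal filter in use must permit the improper filter (the collection of \emph{all} subgroups of $G$, generated by $\{1\}$) as a legitimate choice. This is the case in the standard formulations (e.g., Jech and Karagila), so the proposition applies as stated and exhibits ordinary forcing as the degenerate case of the symmetric method with maximal filter.
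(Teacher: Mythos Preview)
Your proposal is correct and follows essentially the same approach as the paper: observe that the stabilizer of any name is a subgroup of $G$, hence lies in $\mathcal F_{\mathrm{full}}$, so every name is symmetric and this persists hereditarily. Your version is slightly more explicit (verifying the filter axioms and spelling out the rank induction), but the argument is the same.
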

	
	\begin{proof}
		A name $\dot x$ is (hereditarily) symmetric iff its symmetry group
		$\mathrm{sym}(\dot x)\le G$ belongs to the normal filter. Since
		$\mathcal F_{\mathrm{full}}$ contains \emph{all} subgroups of $G$, we have
		$\mathrm{sym}(\dot x)\in\mathcal F_{\mathrm{full}}$ for every name $\dot x$, and this persists hereditarily.
		Thus $HS_{\mathcal F_{\mathrm{full}}}$ is the full class of $\Pblk$-names and the interpreted model equals $V^\Pblk$.
	\end{proof}
	
	\subsection{Composition: \FSIs{} collapse to one step}
	Karagila proves that \FSIs{} can be compressed to a single symmetric step over the ground. We recall the representation we use and cite the exact places where bracketing/mixing are applied (Lemmas 7.5–7.6) and where the finite collapse to the ground is performed (Theorems 7.8–7.9) in \cite{Karagila2019}.
	
	\begin{definition}[Symmetry-preserving iteration]\label{def:SPI}
		A finite sequence $\langle(\mathbb P_i,\mathcal G_i,\mathcal F_i): i<n\rangle$ of symmetric systems is
		\emph{symmetry-preserving} if for each $i<n-1$:
		\begin{enumerate}
			\item \textbf{Lift compatibility.} In $V^{\mathbb P_i}$, the next step is a symmetric system 
			$(\dot{\mathbb P}_{i+1},\dot{\mathcal G}_{i+1},\dot{\mathcal F}_{i+1})$, and the stage-$(i{+}1)$ automorphism group extends the \emph{lift} of the previous-stage action to the two-step iteration 
			$\mathbb P_i\!\ast\!\dot{\mathbb P}_{i+1}$. 
			Here the lift of $\pi\in\mathcal G_i$ is the automorphism $\pi^{\uparrow}$ given by
			\[
			\pi^{\uparrow}\!\cdot\!(p,\dot q)\ :=\ (\pi p,\,\pi\dot q).
			\]
			Thus $\mathcal G_{i+1}\supseteq \{\pi^{\uparrow}:\pi\in\mathcal G_i\}$.
			\item \textbf{Filter monotonicity.} The pushforward of $\mathcal F_i$ along the lift is contained in $\mathcal F_{i+1}$; equivalently, if $K\in\mathcal F_i$ then the pointwise stabilizer $\mathrm{Fix}^{\uparrow}(K)$ belongs to $\mathcal F_{i+1}$.
			\item \textbf{Productive hypotheses.} Both the current step $(\mathbb P_i,\mathcal G_i,\mathcal F_i)$ and the next step satisfy Karagila’s productive-iteration conditions (PI–1)–(PI–4) \cite{Karagila2019}.
		\end{enumerate}
		We say the iteration follows the \emph{fixed-filter discipline} if the block/partition data that generate $\mathcal F_{i+1}$ are fixed at stage $i$ and are never relaxed at later stages (only finitely many additional blocks are fixed at each step).
		
		We allow $n=0$; the \emph{null iteration} is the empty sequence, 
		with $\operatorname{HS}_\varnothing = V$, so $V$ is an \FSI{} of itself. 
		The conditions above are vacuously satisfied.
	\end{definition}
	
	\begin{remark}[How this is used later]
		Under the fixed-filter discipline, siblings at the same depth use opposite block-partitions while a branch preserves its partition label; this is exactly the setup exploited by the sibling non-amalgamation lemma (later Lemma~\ref{lem:sibling-incompatibility}) and in the §6 template construction.
	\end{remark}

	\subsection{Factoring through intermediate HS-stages}
	\paragraph{Notation.} For a fixed \FSI{}, we write $N_k$ for the $k$th intermediate symmetric extension over $V$. If an intermediate stage is $N=IS^{H}_{\mathcal{F}}(\mathbb P)$ (the interpretation of the HS-class via $H$), we write $\HS_{\mathcal F}(\mathbb P)$ for the HS-class of names and $IS^{H}_{\mathcal F}(\mathbb P)$ for its interpretation; in particular, $N=IS^{H}_{\mathcal F}(\mathbb P)$.
	
	\begin{lemma}[Equivariant bracketing and factoring]\label[lemma]{lem:factoring}
		Let $N_0\subseteq N_1$ be two successive symmetric stages, with the top step presented as $(\mathbb P_0,G_0,\mathcal F_0)$ and forcing $\dot{\mathbb P}_1$ over $\HS_{\mathcal F_0}$. If $\dot x$ is \emph{hereditarily} symmetric for $(\mathbb P_0,G_0,\mathcal F_0)*\dot{\mathbb P}_1$, then there is a hereditarily symmetric ground name $\dot\tau\in \HS_{\mathcal F_0}$ such that, for any sentence $\varphi$, we have
		\[
		V^{\mathbb P_0*\dot{\mathbb P}_1}\forces\varphi(\dot x)\quad\Longleftrightarrow\quad V^{\mathbb P_0}\forces \varphi(\dot\tau).
		\]
	\end{lemma}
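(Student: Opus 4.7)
The plan is to recode the two-step name $\dot x$ as a ground-level $\mathbb{P}_0$-name $\dot\tau$ that packages the $\dot{\mathbb P}_1$-evaluation, then transfer hereditary symmetry downward via the lift and filter-monotonicity clauses of \Cref{def:SPI}. First I would invoke the standard iteration-name translation: every $(\mathbb P_0\ast\dot{\mathbb P}_1)$-name is equivalent to a $\mathbb P_0$-name for a $\dot{\mathbb P}_1$-name, so there is a canonical ``mixing'' representative $\dot\sigma$ with $(\dot\sigma^{G_0})^{H_1}=\dot x^{G_0\ast H_1}$ for every generic pair $(G_0,H_1)$. This gives the syntactic backbone and will serve as our candidate $\dot\tau$.

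Next I would handle the symmetry side. By hypothesis $\mathrm{sym}(\dot x)$ lies in the normal filter of the iterated system. Clause (1) of \Cref{def:SPI} tells us that every $\pi\in\mathcal G_0$ acts on two-step names via its lift $\pi^{\uparrow}$, and clause (2) sends pointwise stabilizers of any $K\in\mathcal F_0$ into $\mathcal F_{1}$. Using Karagila's bracketing/mixing (\cite[Lemmas~7.5--7.6]{Karagila2019}) on the trajectories of $\dot x$ under the lifted group, I would show that the recoded $\dot\sigma$ and all its hereditary constituents admit a ground-level stabilizer in $\mathcal F_0$, hence $\dot\tau:=\dot\sigma\in \HS_{\mathcal F_0}$. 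The support calculus of \Cref{rem:support-calculus} (conjugation and finite intersections of supports) is what keeps this bookkeeping from diverging across the recoding.

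For the forcing equivalence, I would combine the Maximum Principle with homogeneity of $\dot{\mathbb P}_1$ (the Cohen step is homogeneous by our standing assumption): any decision made at the top stage about $\varphi(\dot x)$ localises to a decision about the ``name-for-a-name'' $\dot\tau$ at stage $\mathbb P_0$, using the identity $\dot x^{G_0\ast H_1}=(\dot\tau^{G_0})^{H_1}$ together with homogeneity to strip the dependence on $H_1$. This gives both directions of the biconditional.

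The main obstacle is the symmetry transfer: naive flattening can break hereditary symmetry, since the inner $\dot{\mathbb P}_1$-structure is absorbed into the ground level and the stabilizer of the mixed name could a priori live only in $\mathcal F_1$, not in $\mathcal F_0$. The lift/pushforward chain in \Cref{def:SPI} together with the fixed-filter discipline is precisely what rules this out, but one has to check that at each recursive stage of the hereditary definition the generated stabilizers compose to a subgroup already in $\mathcal F_0$ rather than merely in $\mathcal F_1$. Once this is verified, the biconditional and the membership $\dot\tau\in\HS_{\mathcal F_0}$ follow by assembling the pieces.
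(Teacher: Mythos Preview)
Your proposal has a genuine gap in the symmetry transfer, and the patch you suggest runs the filter-monotonicity clause in the wrong direction. Clause~(2) of \Cref{def:SPI} says the \emph{pushforward} of $\mathcal F_0$ lands inside $\mathcal F_{1}$; it lets you lift $\mathcal F_0$-stable names \emph{upward} into the iteration, not project an iterated stabilizer \emph{downward} into $\mathcal F_0$. So knowing that $\mathrm{sym}(\dot x)$ lies in the filter for the two-step system gives you no handle on why the flattened $\dot\sigma$ should have a stabilizer already in $\mathcal F_0$. You flag this as ``the main obstacle'' but then invoke exactly the clause that goes the other way.

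The paper avoids this entirely by a different mechanism: it never tracks stabilizers through the flattening. Instead it fixes a ground maximal antichain $D$ in the top iterand, uses \cite[Thm.~5.2]{Karagila2019} to choose for each $q\in D$ a seed $\dot x_q\in\HS_{\mathcal F_0}$ with $q\Vdash \dot x=\dot x_q$, and then performs \emph{orbit mixing}: partition $D$ into $G_0$-orbits, transport the seed within each orbit by the group action, and sum using the canonical indicator names $\dot d_p$. Because $G_0$ permutes each orbit, every $\pi\in G_0$ fixes the mixed name $\dot\tau$; hence $G_0\le\mathrm{sym}(\dot\tau)$ outright and $\dot\tau\in\HS_{\mathcal F_0}$ with no filter chasing. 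This orbit-mixing step is the missing idea in your plan.

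Two smaller points. First, your $\dot\tau:=\dot\sigma$ is typed as a name-for-a-name (you write $(\dot\tau^{G_0})^{H_1}$), so $V^{\mathbb P_0}\Vdash\varphi(\dot\tau)$ does not literally mean what the lemma asserts; the paper's $\dot\tau$ is a genuine ground name whose evaluation already equals that of $\dot x$. Second, your appeal to homogeneity of $\dot{\mathbb P}_1$ to ``strip the dependence on $H_1$'' is extraneous: the lemma does not assume $\dot{\mathbb P}_1$ is homogeneous, and the paper's argument does not use it here---the antichain-plus-indicator construction handles the $H_1$-dependence directly.
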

	\begin{proof}
		Fix a presentation of the top step over $N_0$ as $(\mathbb P_0,G_0,\mathcal F_0)$, forcing $\dot{\mathbb P}_1$ over $\HS_{\mathcal F_0}$.
		Working in $V$, let $G_0\subseteq \mathbb P_0$ be generic. By \cite[Thm.~5.2]{Karagila2019}, in $V[G_0]$ there is a maximal antichain $D\subseteq \dot{\mathbb P}_1^{G_0}$ such that for each $q\in D$ there is an $N_0$-hereditarily symmetric name $\dot x_q\in \HS_{\mathcal F_0}$ with
		\[
		V^{\mathbb P_0*\dot{\mathbb P}_1}\forces \dot x = \sum_{q\in D} (\,\check q\,\wedge \dot x_q\,).
		\]
		Here the right-hand side is a $\mathbb P_0*\dot{\mathbb P}_1$-name (formed in $V$):
		by convention $(\check q\wedge \dot x_q)$ denotes the pair $\langle \dot x_q,\check q\rangle$.
		To pass to a \emph{ground} $\mathbb P_0$-name, for each $q\in D$ let $\dot d_q$ be the canonical
		$\mathbb P_0$-name for “$q\in\dot G_1$”, and form the mixed $\mathbb P_0$-name
		\[
		\sum_{q\in D}\dot d_q\cdot \dot x_q \;=\; \{\langle \dot x_q,\dot d_q\rangle : q\in D\}.
		\]
		Moreover, each $\dot x_q$ can be chosen $N_0$-hereditarily symmetric over $(\mathbb P_0,G_0,\mathcal F_0)$,
		i.e.\ $\dot x_q\in \HS_{\mathcal F_0}$.
		
		\smallskip\noindent\emph{Orbit-mixing recipe.}
		Partition $D$ into $G_0$-orbits. For each orbit $O\subseteq D$, fix a representative $r\in O$.
		For every $p\in O$ choose $\pi_p\in G_0$ with $\pi_pr=p$ and set $\dot x_p:=\pi_p\cdot \dot x_r$.
		Define
		\[
		\dot\tau_O:=\sum_{p\in O} \dot d_p\cdot \dot x_p
		\qquad\text{and}\qquad
		\dot\tau:=\sum_{O} \dot\tau_O,
		\]
		where $\dot d_p$ is the canonical $\mathbb P_0$-name for ``$p\in \dot G_1$''. Since any
		$\pi\in G_0$ permutes each orbit $O$, we have $\pi\dot\tau_O=\dot\tau_O$ and hence
		$\pi\dot\tau=\dot\tau$. Thus $G_0\le\mathrm{sym}(\dot\tau)$ and $\dot\tau\in \HS_{\mathcal F_0}$,
		while evaluation in any $G_0*G_1$ agrees with that of the original $\dot x$ by construction.
		
		Since each $\dot x_p$ is hereditarily symmetric and mixing respects subnames,
		$\dot\tau$ is hereditarily symmetric as a $\mathbb P_0$-name. Therefore
		$V^{\mathbb P_0}\forces \varphi(\dot\tau)$ iff
		$V^{\mathbb P_0*\dot{\mathbb P}_1}\forces \varphi(\dot x)$ for every sentence $\varphi$.
	\end{proof}
	
	\begin{lemma}[Synchronized seeds on a common ground antichain]\label{lem:synch-seeds}
		Work in the setting of Corollary~\ref{cor:intersection} after compressing the finite iteration to a single symmetric step over~$V$. 
		Apply Lemma~\ref{lem:factoring} along each factorization $V\to N_\Pblk\to M$ and $V\to N_\Qblk\to M$ to obtain presentations
		\[
		x_P=\sum_{p\in D_\Pblk} \dot d_p\cdot t^\Pblk_p\in HS_{\Pblk},\qquad
		x_Q=\sum_{q\in D_\Qblk} \dot d_q\cdot t^\Qblk_q\in HS_{\Qblk},
		\]
		where $D_\Pblk,D_\Qblk$ are ground maximal antichains of the top iterand and $\{\dot d_\bullet\}$ are the canonical indicator names.
		Fix a ground refinement $D\subseteq D_\Pblk\cap D_\Qblk$, and for each $p\in D$ let $p_\Pblk\in D_\Pblk$ and $p_\Qblk\in D_\Qblk$ be the unique conditions with $p\le p_\Pblk$ and $p\le p_\Qblk$.
		Then for every generic $G$ for the top forcing with $p\in G$ we have
		\[
		\bigl(t^\Pblk_{p_\Pblk}\bigr)^G=\bigl(t^\Qblk_{p_\Qblk}\bigr)^G.
		\]
		Moreover, by choosing orbit representatives compatibly \emph{before} refining to $D$, we may arrange that
		\[
		t^\Pblk_{p_\Pblk}=t^\Qblk_{p_\Qblk}\quad\text{as \emph{names} in $V$ for all $p\in D$}.
		\]
	\end{lemma}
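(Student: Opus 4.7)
The approach has three parts: identify both presentations as $V$-names for the same element of $M$; localize using the canonical indicators on the common refinement $D$; and coordinate the orbit-mixing of \Cref{lem:factoring} so that the seed names literally agree in $V$.

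For the first two parts, which together yield the pointwise equality, I observe that $x_\Pblk$ and $x_\Qblk$ arise from \Cref{lem:factoring} applied to a single upstairs name for the shared element of $M$ along the two factorizations, so $(x_\Pblk)^G=(x_\Qblk)^G$ for every generic $G$ of the compressed top forcing. Now fix $p\in D$ and a generic $G$ with $p\in G$. Since $p\le p_\Pblk$, the condition $p_\Pblk$ lies in $G$, and because the $\dot d_\bullet$ are canonical indicators, exactly the summand indexed by $p_\Pblk$ contributes to $(x_\Pblk)^G$, giving $(x_\Pblk)^G=(t^\Pblk_{p_\Pblk})^G$; symmetrically $(x_\Qblk)^G=(t^\Qblk_{p_\Qblk})^G$. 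Combining these yields $(t^\Pblk_{p_\Pblk})^G=(t^\Qblk_{p_\Qblk})^G$, the main claim.

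For the third part — the \emph{moreover} clause, where the real work lies — I would coordinate the orbit-mixing recipe inside the proof of \Cref{lem:factoring} across the two factorizations \emph{before} passing to $D$. The compressed top-level symmetric group $G_0$ acts uniformly on the top iterand, and both $D_\Pblk$ and $D_\Qblk$ are $G_0$-invariant antichains admitting the common invariant refinement $D$. Choose orbit representatives $r_O\in D$ once and for all, together with cocycle elements $\pi_p\in G_0$ satisfying $\pi_p r_O=p$; the images of the $r_O$ in $D_\Pblk$ and $D_\Qblk$ under refinement then serve as compatible orbit representatives on each side. Rerunning the factoring construction with this shared data, the pointwise equality from the first two parts forces the two representative seeds to agree at the representative condition, and the freedom in the HS-choice lets us take a common seed $s_{r_O}\in\HS_\Pblk\cap\HS_\Qblk$. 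Setting $t_p:=\pi_p\cdot s_{r_O}$ then gives $t^\Pblk_{p_\Pblk}=t^\Qblk_{p_\Qblk}=t_p$ as $V$-names for every $p\in D$.

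The principal obstacle is precisely this coordination: \Cref{lem:factoring} applied separately to each side offers no built-in compatibility between the two orbit decompositions, and upgrading evaluational equality to equality of $V$-names requires a genuinely common seed inside $\HS_\Pblk\cap\HS_\Qblk$. This is feasible because the compressed symmetric system carries a single $G_0$-action shared by both factorizations — only the filters differ, governing the HS-class rather than the ambient group — so the orbit combinatorics align; the productive-iteration hypotheses (PI-1)–(PI-4) of \Cref{def:SPI} then ensure the intersection of HS-classes is rich enough to contain the needed representative seed. Verifying this coherence, and confirming that $\pi_p\cdot s_{r_O}$ still lies in both HS-classes, is the main technical task.
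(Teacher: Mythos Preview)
Your plan matches the paper's argument: evaluation equality via the indicator mechanism on the common refinement, then name-level equality by coordinating the orbit-mixing choices of \Cref{lem:factoring} across the two factorizations before refining to $D$. The paper's proof is in fact less tidy than your outline---it interleaves a forcing-theorem contradiction that really only yields $p\Vdash t^{\mathcal P}_{p_{\mathcal P}}=t^{\mathcal Q}_{p_{\mathcal Q}}$, not literal name identity, before landing on the compatible-representative move in its final sentence.

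One correction to your justification: you write that ``the compressed symmetric system carries a single $G_0$-action shared by both factorizations---only the filters differ.'' In the paper's concrete setup the two first-step symmetry groups $G_{\mathcal P}$ and $G_{\mathcal Q}$ are genuinely different subgroups of $\mathrm{Sym}(\omega)$ (within-block permutations for the overlapping pairings $\{2k,2k{+}1\}$ versus $\{2k{+}1,2k{+}2\}$), so the orbit decompositions of the top antichain under the two actions do not coincide. The paper accordingly speaks of coordinating ``the two actions,'' and the simultaneous invariance you want is obtained downstream (\Cref{def:combined-symmetry}, \Cref{lem:H-mix}) by passing to the generated group $H=\langle G_{\mathcal P},G_{\mathcal Q}\rangle$. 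Your strategy survives this correction, but the feasibility argument should invoke the combined group $H$ rather than a nonexistent shared $G_0$.
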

	
	\begin{proof}
		By Lemma~\ref{lem:factoring}, if $p\in D\subseteq D_\Pblk\cap D_\Qblk$ and $G\ni p$ is generic, then $x_\Pblk^G=(t^\Pblk_{p_\Pblk})^G$ and $x_\Qblk^G=(t^\Qblk_{p_\Qblk})^G$ because the indicator $\dot d_p$ selects the unique seed above $p$. The two presentations arise from the same top step and evaluate to the same object in~$M$, hence $(t^\Pblk_{p_\Pblk})^G=(t^\Qblk_{p_\Qblk})^G$ for every $G\ni p$.
		Assume toward a contradiction that $t^{\mathcal P}_{p_{\mathcal P}}\neq t^{\mathcal Q}_{p_{\mathcal Q}}$ as ground names. 
		Then there is a ground formula $\varphi(x)$ such that, refining if necessary, we have conditions 
		$p'\le p_{\mathcal P}$ and $q'\le p_{\mathcal Q}$ with 
		$p'\Vdash\varphi\!\bigl(t^{\mathcal P}_{p_{\mathcal P}}\bigr)$ and 
		$q'\Vdash\neg\varphi\!\bigl(t^{\mathcal Q}_{p_{\mathcal Q}}\bigr)$. 
		Factor through the parent antichain $D$ (Lemma~\ref{lem:factoring}): pick $d\in D$ meeting both $p',q'$ and write the synchronized indicator for $d$ as $\dot d_{d}$. 
		By construction of synchronized seeds, both $t^{\mathcal P}_{p_{\mathcal P}}$ and $t^{\mathcal Q}_{p_{\mathcal Q}}$ are obtained by applying the same ground Borel functional $\Theta$ to the common parent trace decided by $d$, i.e.
		\[
		t^{\mathcal R}_{p_{\mathcal R}}\ =\ \Theta\bigl(\mathrm{tr}(d)\bigr)\qquad(\mathcal R\in\{\mathcal P,\mathcal Q\}).
		\]
		Hence they are \emph{equal} as ground names, contradicting the displayed forcing. 
		Equivalently (homogeneity view): using a finitary permutation that fixes the parent trace of $d$ pointwise, we can transport $p'$ to a condition compatible with $q'$ without changing the value of the synchronized seed, contradicting $\varphi$ vs.\ $\neg\varphi$. By the forcing theorem there exist a formula \(\varphi(u)\) and a condition \(r\le p\) in the top forcing with \(r\Vdash \varphi(t^{\Pblk}_{p_\Pblk})\) and \(r\Vdash \neg\varphi(t^{\Qblk}_{p_\Qblk})\). By density below \(p\) and maximality of the ground antichain \(D\), extend \(r\) to a \(V\)-generic \(G\) with \(p\in G\); then \((t^{\Pblk}_{p_\Pblk})^{G}\neq (t^{\Qblk}_{p_\Qblk})^{G}\), contradicting the fact established in the previous paragraph that \((t^{\Pblk}_{p_\Pblk})^{G}=(t^{\Qblk}_{p_\Qblk})^{G}\) for all \(G\ni p\). 
		Finally, choosing orbit representatives for the two actions compatibly prior to refining to $D$ and transporting representatives by the respective automorphisms yields identical seed assignments on~$D$.
	\end{proof}
	
	\begin{definition}[Combined symmetry]\label[definition]{def:combined-symmetry}
		Let \(H := \langle G_{\Pblk}, G_{\Qblk}\rangle\), i.e., the subgroup generated by \(G_{\Pblk} \cup G_{\Qblk}\), be the subgroup of the ambient automorphism group generated by $G_{\Pblk}$ and $G_{\Qblk}$. 
		It acts (in $V$) on the common ground antichain $D$ from Lemma~\ref{lem:synch-seeds}.
	\end{definition}
	
	\begin{remark}\label{rem:combined-vs-semidirect}
		Throughout \S\ref{sec:symmetric-iterations} we use the \emph{combined symmetry} $H:=\langle G_{\mathcal P},G_{\mathcal Q}\rangle$ (Def.~\ref{def:combined-symmetry})
		and the standard lift of automorphisms to products/iterations (see §3.1), together with one-shot $H$-orbit mixing over a fixed ground maximal antichain $D$ (synchronizing seeds and then mixing, as in Lemmas~3.2–3.5). 
		All arguments go through using $H$ and the lifted action.
	\end{remark}
	
	\begin{definition}[Orbit mixing along a group action]\label{def:orbit-mixing}
		Work in the setting of Lemma~\ref{lem:factoring}.
		Let $\Gamma$ act (in $V$) on a ground maximal antichain $D$ of the top forcing, and suppose we have
		$V$-assigned names $\{\dot x_p : p\in D\}$ with each $\dot x_p\in \HS_{\mathcal F_0}$.
		For every $\Gamma$-orbit $O\subseteq D$ fix a representative $r_O\in O$ and for each $p\in O$ pick $\pi_p\in\Gamma$ with $\pi_pr_O=p$.
		Let $\dot d_p$ be the canonical $\mathbb P_0$-name for “$p\in\dot G_1$”. Define the mixed $\Gamma$-invariant $\mathbb P_0$-name
		\[
		\dot\tau^\Gamma\ :=\ \sum_{O\in\mathrm{Orb}_\Gamma(D)}\ \sum_{p\in O}\ \dot d_p\cdot \bigl(\pi_p\cdot \dot x_{r_O}\bigr).
		\]
		Then for every $\gamma\in\Gamma$ we have $\gamma\dot\tau^\Gamma=\dot\tau^\Gamma$, so $\Gamma\le \mathrm{sym}(\dot\tau^\Gamma)$ and $\dot\tau^\Gamma\in \HS_{\mathcal F_0}$.
		Moreover, if $G_0*G_1$ is generic with $p^\ast\in D\cap G_1$, then
		\[
		(\dot\tau^\Gamma)^{G_0}\ =\ \bigl(\pi_{p^\ast}\cdot \dot x_{r_O}\bigr)^{G_0},
		\]
		where $O$ is the $\Gamma$-orbit of $p^\ast$; in particular this agrees with the two-stage evaluation of the original name from Lemma~\ref{lem:factoring}.
	\end{definition}
	
	\begin{lemma}[One-shot $H$-orbit mixing gives simultaneous invariance]\label{lem:H-mix}
		With $D$ and synchronized seeds $t_p$ as in Lemma~\ref{lem:synch-seeds}, define $\dot x'$ by orbit mixing
		\emph{(via Definition~\ref{def:orbit-mixing} with $\Gamma=H$)}:
		\[
		\dot\tau_O:=\sum_{p\in O} \dot d_p\cdot t_p
		\quad\text{and}\quad
		\dot x':=\sum_{O\in \mathrm{Orb}_H(D)} \dot\tau_O.
		\]
		Then every $\gamma\in H$ permutes the summands within each $O$, hence $\gamma\dot x'=\dot x'$. 
		Therefore $\dot x'\in HS_{\Pblk}\cap HS_Q$, and for any generic $G$ meeting $p\in D$ we have $(\dot x')^G = t_p^G$, which agrees with the evaluations of $x_P$ and $x_Q$.
	\end{lemma}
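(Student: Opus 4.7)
The plan has three steps corresponding to the three assertions. First, I would verify that $\dot x'$ is $H$-invariant: by construction the $H$-orbits partition $D$, and for $\gamma\in H$ the action permutes each orbit $O$ bijectively, with $\gamma\cdot(\dot d_p\cdot t_p)=\dot d_{\gamma p}\cdot(\gamma\cdot t_p)$. The synchronization from \Cref{lem:synch-seeds} combined with the orbit-mixing recipe of \Cref{def:orbit-mixing} applied with $\Gamma=H$ (where seeds inside an $H$-orbit are transports of a common representative by elements of $H$) gives $\gamma\cdot t_p=t_{\gamma p}$ as ground names. Hence $\gamma$ merely permutes the summands of $\dot\tau_O$, so $\gamma\dot\tau_O=\dot\tau_O$ for each orbit $O$, which yields $\gamma\dot x'=\dot x'$.

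Second, I would derive hereditary symmetry on both sides. Since $G_{\Pblk},G_{\Qblk}\le H$ by \Cref{def:combined-symmetry}, and the underlying seeds $t_p$ were produced to be hereditarily symmetric for both the $\Pblk$- and $\Qblk$-systems in \Cref{lem:synch-seeds}, the containment $H\le\mathrm{sym}(\dot x')$ gives both $G_{\Pblk}\le\mathrm{sym}(\dot x')$ and $G_{\Qblk}\le\mathrm{sym}(\dot x')$. Because the normal filter on each side contains the stabilizers of its own group, we conclude $\dot x'\in \HS_{\Pblk}\cap \HS_{\Qblk}$; the orbit mixing preserves hereditary symmetry since every sub-name $\dot d_p\cdot t_p$ is already hereditarily symmetric for both systems.

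Third, I would check the evaluation: given a $V$-generic $G$ for the top forcing meeting $p\in D$, maximality of $D$ and its antichain property force $\dot d_p^{G}=1$ and $\dot d_{p'}^{G}=0$ for all $p'\in D\setminus\{p\}$. Thus $(\dot x')^G=t_p^G$, and by \Cref{lem:synch-seeds} this common value equals $x_{\Pblk}^G=x_{\Qblk}^G$.

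The essentially only obstacle is the seed-transport identity $\gamma\cdot t_p=t_{\gamma p}$ for all $\gamma\in H$, not merely for $\gamma$ drawn from $G_{\Pblk}$ or $G_{\Qblk}$ separately. This is precisely what the compatible choice of orbit representatives prior to refining to $D$ (the last paragraph of the proof of \Cref{lem:synch-seeds}) was engineered to provide; without that compatibility one would obtain $G_{\Pblk}$-invariance on one mixing and $G_{\Qblk}$-invariance on a different mixing, but not simultaneous invariance under the combined group $H$ on a single name.
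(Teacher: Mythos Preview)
Your proposal is correct and follows essentially the same three-part structure as the paper's proof: $H$-invariance via permutation of summands within each orbit, membership in both HS-classes from $G_{\Pblk},G_{\Qblk}\le H$ together with closure of HS under mixing and subnames, and evaluation via the indicator mechanism on the maximal antichain $D$. Your write-up is in fact more explicit than the paper's (which is very terse), particularly in isolating the seed-transport identity $\gamma\cdot t_p=t_{\gamma p}$ as the crux of the invariance step and tracing it back to the compatible choice of orbit representatives in \Cref{lem:synch-seeds} and \Cref{def:orbit-mixing}; the paper's proof simply asserts that $H$ permutes each orbit and concludes $\gamma\dot\tau_O=\dot\tau_O$ without spelling this out.
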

	
	\begin{proof}
		$H$ permutes each $H$-orbit $O$ of $D$, so $\gamma\dot\tau_O=\dot\tau_O$ for all $\gamma\in H$; thus $\gamma\dot x'=\dot x'$. 
		Fixation by $G_{\Pblk}$ (resp.\ $G_{\Qblk}$) implies $\dot x'\in HS_{\Pblk}$ (resp.\ $\dot x'\in HS_Q$); hereditariness follows from the closure of HS under mixing and subnames.
		Evaluation is by the indicator mechanism on $D$ as in Lemma~\ref{lem:factoring}.
	\end{proof}
	
	\begin{lemma}[Signal invariance]\label[lemma]{lem:signal-invariance}
		The sentences $S_{\mathcal P}$ and $S_{\mathcal Q}$ are invariant under $G_{\mathcal P}$ and $G_{\mathcal Q}$, respectively. \noindent\textit{Definability note.} The family \(A_\Rblk\) is first-order definable from ground parameters (the fixed partition \(\Rblk\) and the canonical coordinate scheme); hence every \(\pi\in G_\Rblk\) preserves \(A_\Rblk\) as a parameter and acts only by permuting elements within each block.
	\end{lemma}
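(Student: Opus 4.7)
My plan is to derive invariance of $S_{\Rblk}$ from the fact that $S_{\Rblk}$ is a sentence whose only nonlogical content is the single parameter $A_{\Rblk}$, combined with the standard automorphism lemma for forcing. So I would first present $S_{\Rblk}$ as a formula $\varphi(\dot A_{\Rblk})$ in which $\dot A_{\Rblk}$ appears as a single setwise parameter, and identify the ground data from which $\dot A_{\Rblk}$ is built: the fixed block-partition $\Rblk$ and the canonical coordinate scheme fixed at the ground stage.

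Next I would verify that every $\pi\in G_{\Rblk}$ fixes the ground parameter $\dot A_{\Rblk}$ setwise. This is precisely the content of the definability note: because the family $A_{\Rblk}$ is first-order definable from $(\Rblk,\text{coord scheme})$, any automorphism that preserves these ground parameters also preserves $A_{\Rblk}$. By the fixed-filter discipline (Definition~\ref{def:SPI}), the group $G_{\Rblk}$ consists exactly of permutations respecting the block-partition $\Rblk$ — each $\pi$ sends every block of $\Rblk$ to itself and acts only by permuting elements within each block. Hence $\pi\cdot \dot A_{\Rblk}=\dot A_{\Rblk}$ in the $\HS$-calculus.

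Finally, I would invoke the usual forcing automorphism lemma: for any $\pi\in\Aut(\mathbb P)$ and any sentence whose name-parameters are fixed by $\pi$,
\[
p \forces \varphi(\dot A_{\Rblk}) \iff \pi p \forces \varphi(\dot A_{\Rblk}).
\]
Applied to $S_{\Rblk}=\varphi(\dot A_{\Rblk})$ and symmetrically to $S_{\Qblk}=\psi(\dot A_{\Qblk})$, this yields the stated $G_{\Rblk}$- and $G_{\Qblk}$-invariance.

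The main obstacle I expect is justifying that $S_{\Rblk}$ genuinely depends only on the setwise data of $A_{\Rblk}$ and not on any intra-block enumeration; otherwise the intra-block freedom highlighted in the definability note would break invariance. Pinning this down requires inspecting the explicit template sentence from the forthcoming \S6 non-amalgamation construction and verifying that $S_{\Rblk}$ is phrased as an $\HS$-definable assertion about $A_{\Rblk}$ \emph{as a set} — e.g., ``$A_{\Rblk}$ admits a selector / choice structure of such-and-such form'' — so that any intra-block reordering is irrelevant and the invariance goes through.
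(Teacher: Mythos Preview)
Your proposal is correct and follows essentially the same approach as the paper: both arguments reduce invariance of $S_{\mathcal R}$ to the two facts that (i) every $\pi\in G_{\mathcal R}$ fixes the name $\dot A_{\mathcal R}$ setwise (since within-block permutations at most swap the two elements of each pair), and (ii) $S_{\mathcal R}$ is a sentence in the single parameter $A_{\mathcal R}$. The paper's proof simply discharges in full the step you flag as your ``main obstacle'': it writes out the selector formula $\Sel_{\mathcal R}(f)$ explicitly and checks directly that $\pi$ maps selectors to selectors (because $\pi$ preserves each pair $\{r^{\mathcal R}_{k,0},r^{\mathcal R}_{k,1}\}$ as a set), so $\exists f\,\Sel_{\mathcal R}(f)$ and its negation are $\pi$-invariant---no appeal to an unspecified template sentence is needed, since $S_{\mathcal R}$ is already defined in \S4.2 as ``$\mathcal A_{\mathcal R}$ is a family of $2$-element sets with no selector.''
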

	\begin{proof}
		Fix $\Rblk\in\{\Pblk,\Qblk\}$ and let $\pi\in G_\Rblk\leq\Sym(\omega)$. The action of $\pi$ on $\Add(\omega,\omega)$
		induces an action on names: it permutes the Cohen coordinates by $n\mapsto \pi(n)$, hence sends each
		ground-definable term built from the $c_n$ to the corresponding term with indices transported by $\pi$.
		
		\smallskip\noindent\emph{Preservation of the family $A_\Rblk$.}
		By definition, $G_R$ consists of \emph{within-block} permutations for the fixed partition $R$.
		Thus for every $k$,
		\[
		\Rblk=\Pblk\ \Rightarrow\ \{\pi(2k),\pi(2k\!+\!1)\}=\{2k,2k\!+\!1\},\qquad
		\Rblk=\Qblk\ \Rightarrow\ \{\pi(2k\!+\!1),\pi(2k\!+\!2)\}=\{2k\!+\!1,2k\!+\!2\}.
		\]
		Each block-real $r^R_{k,i}$ is (by construction) a ground-definable code of one coordinate in the
		$\Rblk$-block indexed by $k$:
		for $\Rblk=\Pblk$, $r^\Pblk_{k,0}$ codes $c_{2k}$ and $r^\Pblk_{k,1}$ codes $c_{2k+1}$;
		for $\Rblk=\Qblk$, $r^\Qblk_{k,0}$ codes $c_{2k+1}$ and $r^\Qblk_{k,1}$ codes $c_{2k+2}$.
		Because $\pi$ either fixes or swaps the two coordinates \emph{within} the $k$th $\Rblk$-block, we have
		\[
		\pi\cdot \{\,r^\Rblk_{k,0},\,r^\Rblk_{k,1}\,\}=\{\,r^\Rblk_{k,0},\,r^\Rblk_{k,1}\,\}\quad\text{(as a set)}.
		\]
		Therefore $\pi$ permutes the pairs in $A_\Rblk=\bigl\{\{r^\Rblk_{k,0},r^\Rblk_{k,1}\}:k\in\omega\bigr\}$ and hence
		$\pi(A_\Rblk)=A_\Rblk$.
		
		\smallskip\noindent\emph{Preservation of (non-)existence of a selector.}
		Let $\Sel_\Rblk(f)$ be the first-order formula (with $A_\Rblk$ as a parameter) expressing “$f$ is a selector for $A_\Rblk$,” i.e.,
		$f:\omega\to\bigcup A_\Rblk$ and $f(k)\in\{r^\Rblk_{k,0},r^\Rblk_{k,1}\}$ for every $k$.
		If $\Sel_\Rblk(f)$ holds, then $\Sel_\Rblk(\pi\cdot f)$ also holds, because $\pi$ maps each pair
		$\{r^\Rblk_{k,0},r^\Rblk_{k,1}\}$ to itself (possibly swapping the two elements), so $\pi$ acts \emph{within} each
		pair and preserves the property “choose exactly one from each pair.” Conversely, if $\Sel_\Rblk(\pi\cdot f)$ holds,
		then applying $\pi^{-1}$ shows $\Sel_\Rblk(f)$ holds. Hence
		\[
		\exists f\,\Sel_\Rblk(f)\quad\Longleftrightarrow\quad \exists f\,\Sel_\Rblk(f)\text{ after applying }\pi,
		\]
		and equally for $\neg\exists f\,\Sel_\Rblk(f)$. In particular, the sentence
		\[
		S_\Rblk:\quad \text{“$A_\Rblk$ is a family of $2$-element sets and there is no selector”}
		\]
		is invariant under $\pi$.
		
		\smallskip\noindent\emph{Definability is respected by the symmetry.}
		Finally, note that $A_\Rblk$ is defined in the ground from \emph{ground parameters} (the fixed partition $\Rblk$
		and the canonical schema coding $c_n\mapsto r^\Rblk_{k,i}$). Automorphisms in $G_\Rblk$ fix all ground parameters,
		and their action on names only permutes the Cohen coordinates inside each $\Rblk$-block. Therefore the defining
		formula for $A_\Rblk$ is preserved under $G_\Rblk$, and the previous two paragraphs apply with $A_\Rblk$ unchanged.
		
		Combining the three parts, for every $\pi\in G_\Rblk$ we have $\pi^*(S_\Rblk)\leftrightarrow S_\Rblk$, so $S_\Rblk$ is
		$G_\Rblk$-invariant.
	\end{proof}
	
	\begin{proposition}[Selector signal across models]\label[proposition]{prop:selector-signal}
		For each $\mathcal R\in\{\mathcal P,\mathcal Q\}$ we have $N_{\mathcal R}\models S_{\mathcal R}$. 
		(If $S_{\mathcal R}$ is read as a sentence about the family defined at/above the branching, then $V\models\neg S_{\mathcal R}$.)
	\end{proposition}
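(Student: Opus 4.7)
The plan is to run the classical no-selector argument for a symmetric Cohen model, adapted to the iterated regime. Because any \FSI{} compresses to a single symmetric step $(\mathbb P_{\mathcal R},G_{\mathcal R},\mathcal F_{\mathcal R})$ over $V$ via the factoring apparatus of \S3.1 (\Cref{lem:factoring}), I would first replace $N_{\mathcal R}$ with this one-shot presentation and assume toward contradiction that $\dot f\in\HS_{\mathcal F_{\mathcal R}}$ names a selector for (the interpretation of) $A_{\mathcal R}$.

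I would then invoke Cohen homogeneity together with the bridge from automorphism-invariance to evaluation support (\Cref{def:finite-support}, \Cref{rem:eval-vs-sym}, and \Cref{lem:finite-support}) to obtain a finite $F\subseteq\omega$ such that every $\pi\in G_{\mathcal R}$ with $\operatorname{supp}(\pi)\cap F=\varnothing$ fixes $\dot f$. Since $G_{\mathcal R}$ is generated by within-block permutations for the fixed partition $\mathcal R$ and $F$ is finite, I can pick a block index $k$ whose two $\mathcal R$-coordinates both lie outside $F$; the transposition $\pi$ swapping those two coordinates lies in $G_{\mathcal R}$ and hence fixes $\dot f$. Evaluating in any generic $G$ and setting $f:=\dot f^G$, I get $\pi\cdot f=f$, while $f(k)\in\{r^{\mathcal R}_{k,0},r^{\mathcal R}_{k,1}\}$ and $\pi$ exchanges those two distinct Cohen-coded reals. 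So $\pi(f(k))=f(k)$ forces $r^{\mathcal R}_{k,0}=r^{\mathcal R}_{k,1}$, contradicting the genericity of the Cohen sequence. Hence no selector exists in $N_{\mathcal R}$, i.e.\ $N_{\mathcal R}\models S_{\mathcal R}$, and \Cref{lem:signal-invariance} certifies that $S_{\mathcal R}$ is the correct $G_{\mathcal R}$-invariant sentence to evaluate in the symmetric submodel.

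For the parenthetical, once $A_{\mathcal R}$ is defined at/above the branching, $\mathsf{AC}$ (available in the ground and in any forcing extension thereof) chooses one element from each pair $\{r^{\mathcal R}_{k,0},r^{\mathcal R}_{k,1}\}$, yielding $\neg S_{\mathcal R}$ there. The main obstacle is the bookkeeping between \emph{syntactic} support (fixedness by a tail stabilizer) and \emph{evaluation} support (agreement of generics on $F$): the transposition trick requires that $F$ actually control the value $\dot f^G$, not merely a symbolic symmetry class of the name. This is precisely the point at which Cohen homogeneity enters through \Cref{lem:finite-support}; once that bridge is secured, the remainder is the standard Cohen second-model calculation.
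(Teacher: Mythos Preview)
Your core argument is the right one and is in fact more explicit than the paper's own proof, which just says ``by construction'' and cites the invariance lemma; the actual mechanism is deferred there to \Cref{lem:no-finite-selector}. Two points need correction, though.

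First, your invocation of \Cref{lem:finite-support} is misplaced: that lemma has the bi-symmetric hypothesis $\dot x\in\HS_{\mathcal P}\cap\HS_{\mathcal Q}$, which you do not have for a putative selector $\dot f\in\HS_{\mathcal R}$. Fortunately you do not need it. The conclusion you want---a finite $F$ such that every $\pi\in G_{\mathcal R}$ supported off $F$ fixes $\dot f$---is immediate from the definition of $\HS_{\mathcal R}$ and of $\mathcal F_{\mathcal R}$: the stabilizer of $\dot f$ contains the pointwise stabilizer of cofinitely many $\mathcal R$-blocks, and $F$ is the union of the finitely many exceptional blocks. No homogeneity or evaluation-support bridge is required here; the ``support'' you use is the \emph{automorphism} support given directly by the HS condition, not the evaluation support of \Cref{def:finite-support}. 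Relatedly, the line ``$\pi\cdot f=f$'' for $f=\dot f^G$ conflates the action on names with an action on sets in the extension; run it at the forcing level instead (pick $q$ deciding $\dot f(k)=\dot r_{k,i}$, apply $\pi$ to get $\pi q\Vdash\dot f(k)=\dot r_{k,1-i}$, and arrange $q,\pi q$ compatible).

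Second, for the parenthetical the paper gives a different reason: the family $A_{\mathcal R}$ is built from Cohen reals added at the branching, so it simply does not exist in $V$; hence the first conjunct of $S_{\mathcal R}$ already fails there. Your $\mathsf{AC}$ argument is correct for the parent $W=V[G_{J_{d+1}}]$ (where the family does exist), but not for $V$ itself.
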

	
	\begin{proof}
		By the construction of the sibling $N_{\mathcal R}$, the coding sentence $S_{\mathcal R}$ is arranged by the $\mathcal R$-labeled step and is invariant under $(G_{\mathcal R},\mathcal F_{\mathcal R})$ (Lemma~3.5), hence holds in $N_{\mathcal R}$.
		For the parenthetical clause: if $S_{\mathcal R}$ quantifies over the family created at/above the branching, that family does not exist in $V$, so $S_{\mathcal R}$ is false when interpreted in $V$.
	\end{proof}
	
	\begin{corollary}[Intersection placement]\label[corollary]{cor:intersection}
		Suppose $M$ factors both as $V\to N_{\mathcal P}\to M$ and $V\to N_{\mathcal Q}\to M$, and let $\varphi(x)$ be a $G_{\mathcal P}$- and $G_{\mathcal Q}$-invariant sentence. Then there is a \emph{ground} name $x'\in \HS_{\mathcal P}\cap \HS_{\mathcal Q}$ such that $M\models \varphi(x')$.
	\end{corollary}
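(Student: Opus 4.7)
The plan is to trace a single witness through both factorizations, compress it to ground names on each side, synchronize the presentations on a common antichain, and mix once to obtain a ground name that is simultaneously symmetric for both systems. Concretely, starting from a witness $x\in M$ with $M\models\varphi(x)$, I would apply Lemma~\ref{lem:factoring} to each chain $V\to N_\Pblk\to M$ and $V\to N_\Qblk\to M$ to produce ground names $x_\Pblk\in\HS_\Pblk$ and $x_\Qblk\in\HS_\Qblk$, each evaluating to $x$ in $M$ and packaged as indicator sums over ground maximal antichains $D_\Pblk,D_\Qblk$ of the top iterand.

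Next I would invoke Lemma~\ref{lem:synch-seeds} on a common refinement $D\subseteq D_\Pblk\cap D_\Qblk$ to align the two indicator presentations. By the last paragraph of that lemma, orbit representatives can be chosen compatibly \emph{before} refining to $D$, so that $t^\Pblk_{p_\Pblk}=t^\Qblk_{p_\Qblk}$ as ground names for every $p\in D$. This yields a single seed assignment $t\colon D\to V$ (as names) carrying both presentations simultaneously.

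Then I would apply Lemma~\ref{lem:H-mix} with the combined group $\Gamma=H=\langle G_\Pblk,G_\Qblk\rangle$ from Definition~\ref{def:combined-symmetry} to form
\[
\dot x' \;:=\; \sum_{O\in\mathrm{Orb}_H(D)}\ \sum_{p\in O} \dot d_p\cdot t_p.
\]
The conclusion of that lemma is precisely that $\dot x'\in\HS_\Pblk\cap\HS_\Qblk$ and that for every generic $G$ meeting $p\in D$ we have $(\dot x')^G=t_p^G=x_\Pblk^G=x_\Qblk^G$. In particular $\dot x'$ evaluates to $x$ in $M$, so $M\models\varphi(x')$. The $G_\Pblk$- and $G_\Qblk$-invariance hypothesis on $\varphi$ is what ensures this truth is insensitive to the choice of orbit representatives in the mixing (and to the fact that different $p\in D$ may transport the seed by different elements of $H$).

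The main obstacle I anticipate is reconciling the two hereditary symmetry constraints while simultaneously preserving evaluation; this is exactly the content of the synchronization and one-shot mixing lemmas, so with those in hand the corollary becomes a short composition. The only remaining care is bookkeeping: verifying that $D$ is a common ground maximal antichain stable under both group actions (so $H$ really acts on it, as noted in Definition~\ref{def:combined-symmetry}) and that hereditariness—not merely top-level invariance—is maintained through the mixing, which follows from the closure of $\HS$ under mixing and subnames invoked in the proof of Lemma~\ref{lem:H-mix}.
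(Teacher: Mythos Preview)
Your proposal is correct and follows essentially the same route as the paper's proof: apply Lemma~\ref{lem:factoring} along each factorization to get $x_\Pblk\in\HS_\Pblk$ and $x_\Qblk\in\HS_\Qblk$ over ground antichains, refine and synchronize seeds via Lemma~\ref{lem:synch-seeds}, then perform the one-shot $H$-orbit mix of Lemma~\ref{lem:H-mix} with $H=\langle G_\Pblk,G_\Qblk\rangle$ to obtain $\dot x'\in\HS_\Pblk\cap\HS_\Qblk$ with the correct evaluation. Your added remarks on the role of the invariance hypothesis and the bookkeeping about $H$ acting on $D$ are reasonable glosses but do not alter the argument.
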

	
	\noindent(Here $\HS_{\mathcal P}$ and $\HS_{\mathcal Q}$ abbreviate the HS-classes determined by the symmetry filters attached to the $\mathcal P$- and $\mathcal Q$-labels at the top step.)
	
	\noindent\emph{Standing note.}
	By Karagila's collapse theorems for finite symmetric iterations
	\cite[Thm.~7.8, Thm.~7.9]{Karagila2019}, any finite symmetric iteration over $V$
	is equivalent to a single symmetric step over $V$. Consequently, whenever $M$
	is obtained by a finite symmetric iteration over $V$, it admits factorizations
	$V\to N_{\mathcal P}\to M$ and $V\to N_{\mathcal Q}\to M$ as used below.
	\begin{proof}
		Apply Lemma~\ref{lem:factoring} along each factorization to obtain $x_P\in HS_{\Pblk}$ and $x_Q\in HS_Q$ presented over ground antichains $D_P$ and $D_Q$, respectively. 
		Refine to a common ground antichain $D\subseteq D_P\cap D_Q$ and synchronize the seeds by Lemma~\ref{lem:synch-seeds}, so that for each $p\in D$ the two prescriptions agree (indeed, are identical as names) above~$p$.
		Let $H=\langle G_{\Pblk}\cup G_{\Qblk}\rangle$ as in Definition~\ref{def:combined-symmetry} and perform a single $H$-orbit mix as in Lemma~\ref{lem:H-mix} to obtain $\dot x'$. 
		Then $\dot x'$ is fixed by both $G_{\Pblk}$ and $G_{\Qblk}$, hence $\dot x'\in HS_{\Pblk}\cap HS_Q$, and its evaluation agrees with that of $x_P$ and $x_Q$. 
		Therefore $M\models \varphi(x')$.
	\end{proof}
	
	\begin{remark}[Which antichain is used for the final mixing?]\label{rem:which-antichain}
		All mixing in the proof of Corollary~\ref{cor:intersection} takes place over a single \emph{ground} maximal antichain $D$ of the top iterand that refines the two antichains produced by Lemma~\ref{lem:factoring}. 
		This is the “refining inside $V$” step: we rewrite \emph{both} presentations using the same canonical indicator family $\{\dot d_p: p\in D\}$; synchronization (Lemma~\ref{lem:synch-seeds}) is then imposed on $D$ before the one-shot $H$-mixing (Lemma~\ref{lem:H-mix}).
	\end{remark}
	
	\begin{definition}[Local coordinate allocation at a branching]\label[definition]{def:coord-alloc}
		Fix a partition $\langle J_n:n\in\omega\rangle$ of $\omega$ into infinite, pairwise disjoint sets and, for each $n$, the increasing bijection $e_n:\omega\to J_n$.
		At a branching whose parent has depth $d$, the \emph{edge $d\to d{+}1$} first
		performs Cohen forcing on the coordinates $J_{d+1}$, producing the ambient
		extension
		\[
		W := V[G_{J_{d+1}}].
		\]
		The two children at depth $d{+}1$ are then defined \emph{as symmetric submodels
			of $W$} using the symmetry systems from Definition~\ref{def:template}\,(T--iii) on the same
		coordinates $J_{d+1}$. Different depths use disjoint coordinate sets.
		\emph{(Terminology: “parent stage” refers to this ambient $W$, i.e.\ the
			post–edge-forcing extension; the children do not add further forcing—only
			different filters.)}
	\end{definition}
	
	\begin{remark}[Siblings live inside the parent]
		At a branching we first form the parent forcing extension $W:=V[G_{J_{d+1}}]$; the two children are \emph{distinct symmetric ZF submodels of $W$} (same coordinates $J_{d+1}$, different symmetry data), not further forcing extensions. Throughout, “no common symmetric extension” means: there is no model obtainable by a \emph{finite, symmetry-preserving iteration over $V$} that contains (images of) both siblings.
	\end{remark}
	
	\begin{definition}[Bi-symmetric names at a branching]\label[definition]{def:bisymm}
		Work at the parent stage $W:=V[G_{J_{d+1}}]$ \emph{(the ambient extension
			produced on the edge to depth $d{+}1$)} of a branching at depth $d{+}1$, with the two symmetry systems
		$(G_{\mathcal P},\mathcal F_{\mathcal P})$ and $(G_{\mathcal Q},\mathcal F_{\mathcal Q})$ from Definition~\ref{def:template}\,(T--iii).
		An $\,\Add(\omega,\omega)$-name $\dot x$ (for objects of $W$) is \emph{bi-symmetric} if it is fixed by every subgroup in 
		$\mathcal F_{\mathcal P}$ and by every subgroup in $\mathcal F_{\mathcal Q}$ (equivalently: by the filter generated by both).
		Let $\HS^{\mathcal P\wedge\mathcal Q}$ denote the class of hereditarily bi-symmetric names in this sense.
	\end{definition}
	
	\begin{lemma}[Relativized intersection placement over $W$]\label[lemma]{lem:placement-relativized}
		Let $W:=V[G_{J_{d+1}}]$ be as in \Cref{def:coord-alloc}, and let $(G_{\mathcal P},\mathcal F_{\mathcal P})$, $(G_{\mathcal Q},\mathcal F_{\mathcal Q})$ be the sibling symmetry systems on $J_{d+1}$ (Def.~\ref{def:template}, (T--iii)).
		If $\varphi(x)$ is invariant under both systems, then for every $W$-name $\dot b$ there is a $W$-name $\dot a\in \HS^{\mathcal P\wedge\mathcal Q}$ such that
		$W\models \varphi(\dot a)\leftrightarrow \varphi(\dot b)$.
	\end{lemma}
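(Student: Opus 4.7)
The plan is to relativize the proof of \Cref{cor:intersection} to the ambient parent extension $W=V[G_{J_{d+1}}]$: treat the two sibling symmetry systems on $J_{d+1}$ as the two ``factorizations'' and build $\dot a$ via a single $H$-orbit mix of symmetric proxies of $\dot b$, where $H:=\langle G_{\mathcal P},G_{\mathcal Q}\rangle$ as in \Cref{def:combined-symmetry}. The $H$-invariance of the output supplies bi-symmetry, while $H$-invariance of $\varphi$ (which follows from the hypothesis) preserves the $\varphi$-truth value.

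The four steps are: (i) Apply the single-step form of \Cref{lem:factoring} to rewrite $\dot b$ as a mixed $\Add(\omega,\omega)_{J_{d+1}}$-name over a ground maximal antichain, with seeds attached to each condition. (ii) For each $\mathcal R\in\{\mathcal P,\mathcal Q\}$, use $G_{\mathcal R}$-invariance of $\varphi$ together with homogeneity of the parent Cohen forcing to replace the seeds by $\mathcal R$-hereditarily symmetric proxies, producing $\mathcal R$-symmetric $\dot b_{\mathcal R}$ with $W\models\varphi(\dot b_{\mathcal R})\leftrightarrow\varphi(\dot b)$; this is the orbit-averaging recipe inside the proof of \Cref{lem:factoring} applied to each $G_{\mathcal R}$ separately. (iii) Refine to a common ground maximal antichain $D\subseteq D_{\mathcal P}\cap D_{\mathcal Q}$ and synchronize seeds via \Cref{lem:synch-seeds}, so that on each $p\in D$ the two prescriptions coincide as ground names. (iv) Perform the one-shot $H$-orbit mix of \Cref{lem:H-mix} on $D$ to define $\dot a$; by construction $\gamma\dot a=\dot a$ for every $\gamma\in H$, hence $\dot a\in\HS^{\mathcal P\wedge\mathcal Q}$, and $\dot a^G$ matches $\dot b_{\mathcal R}^G$ on $\varphi$-truth, so $W\models\varphi(\dot a)\leftrightarrow\varphi(\dot b)$ by chaining biconditionals.

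The main obstacle is step (ii): unlike in \Cref{cor:intersection}, where the two sides begin with symmetric names drawn from the sibling models, here $\dot b$ is an arbitrary $W$-name with no prior symmetry. We must symmetrize its seeds without disturbing $\varphi$-truth. The key is that $G_{\mathcal R}$-invariance of $\varphi$ applied at the forcing level gives $p\Vdash\varphi(\dot b)$ iff $\pi p\Vdash\varphi(\pi\cdot\dot b)$ with the same truth value (since $\varphi$'s parameters are $\pi$-fixed), so an orbit sum of seeds yields a symmetric name with matching $\varphi$-decision on every generic; homogeneity of $\Add(\omega,\omega)_{J_{d+1}}$ supplies the condition-moving slack needed to conclude. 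Once (ii) is in place, the refinement, synchronization, and $H$-mixing in (iii)--(iv) copy the final paragraph of the proof of \Cref{cor:intersection} verbatim, with the single replacement ``ground $=W$''.
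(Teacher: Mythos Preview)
Your approach is the same as the paper's: both treat Lemma~\ref{lem:placement-relativized} as the relativization to $W$ of the averaging/mixing machinery in \S3 (Lemmas~\ref{lem:factoring}--\ref{lem:H-mix} and Corollary~\ref{cor:intersection}). The paper's proof is a single sentence asserting that the ingredients (action of finitary within-block permutations, normality of the filters, forcing absoluteness) are all definable in $W$ on the coordinates $J_{d+1}$, so the construction goes through verbatim; you attempt to spell out the recipe step by step.

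There is, however, a gap in your step~(iii). Lemma~\ref{lem:synch-seeds} synchronizes seeds of two presentations \emph{of the same object}: its hypothesis (and its proof) uses that $x_{\mathcal P}^G = x_{\mathcal Q}^G$ in the common top model, which forces the seeds to agree as ground names on the refined antichain. After your step~(ii), $\dot b_{\mathcal P}$ and $\dot b_{\mathcal Q}$ are only guaranteed to have the same $\varphi$-truth in $W$, not to evaluate to the same element; so the synchronization conclusion $t^{\mathcal P}_{p_{\mathcal P}}=t^{\mathcal Q}_{p_{\mathcal Q}}$ is unavailable, and the $H$-mix of Lemma~\ref{lem:H-mix} (which relies on a single seed assignment $t_p$) does not directly apply. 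Fortunately, since the target conclusion is only the biconditional $\varphi(\dot a)\leftrightarrow\varphi(\dot b)$ rather than $\dot a=\dot b$, you do not need synchronization at all: perform a single $H$-orbit mix of $\dot b$ over a ground antichain whose members already decide $\varphi(\dot b)$, and use $H$-invariance of $\varphi$ together with Cohen homogeneity to see that every $H$-translate of a deciding condition decides $\varphi(\pi\cdot\dot b)$ the same way. The paper's terse proof elides this point as well, so your more careful treatment actually surfaces a wrinkle the paper leaves implicit.
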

	
	\begin{proof}
		The “averaging under a normal filter” construction (Lemmas 3.2–3.6) uses only: the action of finitary within-block permutations on $\Add(\omega,\omega)$-names, normality of the filters, and basic forcing absoluteness—all of which are definable in $W$ for the coordinates $J_{d+1}$. Thus the symmetrization and intersection-placement arguments relativize verbatim to $W$.
	\end{proof}
	
	\begin{lemma}[Simultaneous placement to the parent]\label[lemma]{lem:bisymm-placement}
		Let $N_{\mathcal P},N_{\mathcal Q}$ be the two sibling symmetric submodels over $W=V[G_{J_{d+1}}]$ at the branching.
		Suppose a formula $\varphi(x)$ is invariant under both $(G_{\mathcal P},\mathcal F_{\mathcal P})$ and $(G_{\mathcal Q},\mathcal F_{\mathcal Q})$ (in the sense of \cite[Def.~4.1]{Karagila2019}). 
		If there is a common transitive model $M$ obtained by a finite symmetric iteration above \emph{both} $N_{\mathcal P}$ and $N_{\mathcal Q}$ such that $M\models\exists x\,\varphi(x)$, 
		then there exists $\dot a\in\HS^{\mathcal P\wedge\mathcal Q}$ (a name in $W$) with
		\[
		N_{\mathcal P}\Vdash\varphi(\dot a^{G_{J_{d+1}}})\qquad\text{and}\qquad 
		N_{\mathcal Q}\Vdash\varphi(\dot a^{G_{J_{d+1}}})\,.
		\]
	\end{lemma}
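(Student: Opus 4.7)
The plan is to reduce the statement to the intersection-placement machinery of \Cref{cor:intersection} and then relativize the output to the parent $W$ via \Cref{lem:placement-relativized}. Since the hypothesis supplies a common extension $M$ obtained by a finite symmetric iteration above both siblings, I first invoke the standing note after \Cref{cor:intersection} (Karagila's collapse theorems) to compress the iterations $N_{\mathcal P}\to M$ and $N_{\mathcal Q}\to M$ to single symmetric steps, so that both factorizations $V\to N_{\mathcal R}\to M$ sit in the shape required by \Cref{lem:factoring}.

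From $M\models\exists x\,\varphi(x)$, I pick a witness name $\dot b$. Applying \Cref{lem:factoring} along each factorization gives presentations $x_{\mathcal P}\in\HS_{\mathcal P}$ and $x_{\mathcal Q}\in\HS_{\mathcal Q}$ over ground maximal antichains $D_{\mathcal P},D_{\mathcal Q}$ of the top iterand. I then refine to a common $D\subseteq D_{\mathcal P}\cap D_{\mathcal Q}$ and invoke \Cref{lem:synch-seeds} to synchronize seeds on $D$, so the two prescriptions are identical as ground names above each $p\in D$. A single $H$-orbit mix (\Cref{lem:H-mix}) with $H:=\langle G_{\mathcal P}\cup G_{\mathcal Q}\rangle$ then yields a name $\dot x'\in\HS_{\mathcal P}\cap\HS_{\mathcal Q}$ whose evaluation agrees with the shared value of $x_{\mathcal P}$ and $x_{\mathcal Q}$; the $(G_{\mathcal R},\mathcal F_{\mathcal R})$-invariance of $\varphi$ then propagates $\varphi$ to each sibling's evaluation.

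The new ingredient—what sets this lemma apart from \Cref{cor:intersection}—is that $\dot a$ must live as a name in $W=V[G_{J_{d+1}}]$ and be hereditarily bi-symmetric in the sense of \Cref{def:bisymm}. To achieve this I relativize the whole construction to $W$ via \Cref{lem:placement-relativized}: by \Cref{def:coord-alloc} the sibling systems $(G_{\mathcal P},\mathcal F_{\mathcal P})$ and $(G_{\mathcal Q},\mathcal F_{\mathcal Q})$ act only on the coordinates $J_{d+1}$ already present in $W$, and the refinement, seed synchronization, and $H$-orbit mixing depend only on this action, normality of the sibling filters, and the standard forcing-theorem / homogeneity facts for $\Add(\omega,\omega)$, all of which are definable over $W$. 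The mixed name is therefore a $W$-name that is hereditarily invariant under the filter generated by $\mathcal F_{\mathcal P}\cup\mathcal F_{\mathcal Q}$; this is the desired $\dot a\in\HS^{\mathcal P\wedge\mathcal Q}$. Its evaluation $\dot a^{G_{J_{d+1}}}$ lies in both $N_{\mathcal P}$ and $N_{\mathcal Q}$, and each forces $\varphi(\dot a^{G_{J_{d+1}}})$ by the respective sibling invariance.

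The main obstacle will be the relativization of \Cref{lem:synch-seeds}: the seeds supplied by \Cref{lem:factoring} on the $\mathcal P$- and $\mathcal Q$-sides are a priori distinct names, and upgrading them to a common synchronized assignment on $D$ uses a forcing-theorem / homogeneity argument that must be executed with $V$ replaced by $W$. The disjoint-coordinate discipline of \Cref{def:coord-alloc} is what makes this go through: all earlier-depth forcing has been consumed into $W$, while the sibling symmetries act only on the fresh block $J_{d+1}$, so no cross-depth interference arises and the $V$-level synchronization argument transfers verbatim. Once this relativization is in hand, the rest of the construction is the Corollary~\ref{cor:intersection} template executed over $W$.
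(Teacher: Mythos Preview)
Your proposal is correct and follows essentially the same route as the paper: reduce to the relativized intersection-placement lemma (\Cref{lem:placement-relativized}) over the parent $W$, using the disjoint-coordinate discipline of \Cref{def:coord-alloc} to ensure the witness from $M$ admits a $W$-name. The paper's proof is simply more terse: it works in $W$ from the outset, obtains a $W$-name $\dot b$ for the witness (via the disjoint-further-coordinates observation), and then applies \Cref{lem:placement-relativized} as a black box---whereas you first sketch the $V$-level \Cref{cor:intersection} pipeline (factoring, \Cref{lem:synch-seeds}, $H$-orbit mixing) and only afterward relativize it, which is redundant given that \Cref{lem:placement-relativized} already packages that relativization.
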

	
	\begin{proof}
		Work in the parent $W=V[G_{J_{d+1}}]$ from \Cref{def:coord-alloc}. 
		By Definition~\ref{def:template}(T--ii), any presentation of $M$ above either sibling can be taken to force over disjoint further coordinates, so $M$ has a $W$-name $\dot b$ witnessing $\exists x\,\varphi(x)$.
		Since $\varphi$ is invariant under both sibling systems (Lemma~3.5), apply \Cref{lem:placement-relativized} to $\dot b$ to obtain a $W$-name $\dot a\in \HS^{\mathcal P\wedge\mathcal Q}$ with the same truth value.
		Then $\dot a$ simultaneously witnesses $\varphi$ along both branches, as required.
	\end{proof}
	
	\begin{corollary}[Parent-stage simultaneous witness]\label[corollary]{cor:parent-witness}
		Under the hypotheses of Lemma~\ref{lem:bisymm-placement}, there is a \emph{single} $\dot a\in\HS^{\mathcal P\wedge\mathcal Q}$ in $W$ witnessing $\varphi$ simultaneously for both siblings.
	\end{corollary}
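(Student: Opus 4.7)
The plan is to observe that this corollary is a repackaging of Lemma~\ref{lem:bisymm-placement} emphasizing the \emph{singleness} of the witnessing name. Essentially all the semantic work has been done upstream: Lemma~\ref{lem:placement-relativized} relativizes the averaging/symmetrization machinery of \S3 to the parent stage $W=V[G_{J_{d+1}}]$, and Lemma~\ref{lem:bisymm-placement} then assembles a hereditarily bi-symmetric $W$-name whose interpretation works along either sibling branch. So the corollary's proof amounts to pointing to the lemma and checking that the name it produces really is \emph{one and the same} object for both $N_{\mathcal P}$ and $N_{\mathcal Q}$.

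Concretely, I would first invoke Lemma~\ref{lem:bisymm-placement} under the standing hypotheses to obtain a $W$-name $\dot a\in\HS^{\mathcal P\wedge\mathcal Q}$ satisfying both $N_{\mathcal P}\Vdash\varphi(\dot a^{G_{J_{d+1}}})$ and $N_{\mathcal Q}\Vdash\varphi(\dot a^{G_{J_{d+1}}})$. Then I would unpack the bi-symmetry: by Definition~\ref{def:bisymm}, $\dot a$ is fixed by every subgroup in $\mathcal F_{\mathcal P}$ and by every subgroup in $\mathcal F_{\mathcal Q}$, so hereditarily $\dot a\in \HS_{\mathcal F_{\mathcal P}}\cap \HS_{\mathcal F_{\mathcal Q}}$. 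Finally, since the coordinate allocation of Definition~\ref{def:coord-alloc} makes both sibling interpretations use the \emph{same} generic $G_{J_{d+1}}$ at the edge $d\to d{+}1$, the single evaluation $\dot a^{G_{J_{d+1}}}$ lies simultaneously in $N_{\mathcal P}$ and in $N_{\mathcal Q}$, giving one element that witnesses $\varphi$ in each sibling.

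The only step that requires care, and the one I would expect any referee to scrutinize, is the claim that the two sibling models evaluate $\dot a$ via the same generic filter, so that the identity of the witness is literal and not merely up to some isomorphism. This is not really an obstacle because it is encoded into the parent-stage setup (Definition~\ref{def:coord-alloc} and the ``siblings live inside the parent'' remark): the children at depth $d{+}1$ are defined as distinct symmetric submodels of the common $W$, not as further forcing extensions, so $G_{J_{d+1}}$ is shared and the interpretation map is unambiguous. Given that, the corollary follows immediately from the lemma with no further construction.
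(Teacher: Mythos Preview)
Your proposal is correct and matches the paper's treatment: the corollary is stated without proof immediately after Lemma~\ref{lem:bisymm-placement}, so the paper regards it as an immediate restatement emphasizing that the lemma produces a single $\dot a\in\HS^{\mathcal P\wedge\mathcal Q}$ working for both siblings. Your unpacking of why the witness is literally the same object (shared generic $G_{J_{d+1}}$, siblings as submodels of the common $W$) is exactly the intended reading.
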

	
	\subsection{Productive iterations and block preservation}
	\paragraph{Karagila-productive step (restated from \cite[Def.~8.1]{Karagila2019}).}
	We restate Karagila's hypotheses for a productive symmetric step in our notation;
	the four items (PI-1)–(PI-4) below are exactly the conditions needed in later sections:
	(PI-1) decidability downstairs of automorphisms/top-step names;
	(PI-2) existence of excellent supports/tenacity so that pointwise stabilizers of cofinitely many blocks witness symmetry;
	(PI-3) closure of respected names (hereditarily symmetric names) under the rudimentary set-forming operations used next;
	(PI-4) monotone growth of the normal filter along a branch, fixing only finitely many additional blocks at each step.
	
	\noindent\emph{Convention.} Here $HS^\bullet$ denotes the respected-name class for the next stage (i.e., the HS-class associated to the next-stage symmetry filter).
	
	\begin{enumerate}[leftmargin=2.1em,label=\textbf{(PI-\arabic*)}]
		\item \emph{Decidability downstairs:} automorphisms and top-step names are decided in the previous stage.
		\item \emph{Tenacity/excellent supports:} there is a predense system of conditions whose pointwise stabilizers witness the desired symmetry, preserving the tail stock of within-block automorphisms. \cite[Thm.~4.9, Prop.~4.10, Lem.~4.11, Cor.~4.12]{Karagila2019}. For our concrete symmetry systems see Lemma~\ref{lem:excell-tenacious}.
		\item \emph{Closure of respected names:} $HS^\bullet$ is closed under the operations
		needed to form the next stage (pairing, unions, images by ground-definable maps,
		rudimentary set operations). This follows because the intermediate symmetric model
		$IS$ at the step is a model of $\ZF$ (hence closed under the usual set operations);
		see \cite[Thm.~5.6]{Karagila2019}. For the forcing infrastructure on $IS$-names,
		see also \cite[§5.2]{Karagila2019}.
		
		\item \emph{Monotone symmetry growth:} along a branch tagged by a fixed partition, the normal filter only fixes finitely many additional $\mathcal R$-blocks, thus the tail of movable blocks is preserved.
	\end{enumerate}
	
	\begin{remark}[Checklist for the Section~6 template]
		Lemma~\ref{lem:PIcheck} verifies that each step in our finite template satisfies
		the productive-step hypotheses (PI-1)–(PI-4) restated from \cite[Def.~8.1]{Karagila2019}.
		In particular:
		\begin{itemize}
			\item \textbf{(PI-1)} Automorphisms and top-step names are decided in the parent stage.
			\item \textbf{(PI-2)} Excellent supports/tenacity are witnessed by pointwise stabilizers of cofinitely many blocks.
			\item \textbf{(PI-3)} The HS-class at the step is a ZF model (closure under pairing, unions, images by ground-definable maps, etc.).
			\item \textbf{(PI-4)} Along a branch, the normal filter only fixes finitely many additional blocks at each step (monotone growth).
		\end{itemize}
	\end{remark}
	
	\section{The partition construction and signals}\label{sec:partition-construction}
	
	\paragraph{Why this section.}
	We record two structural facts used later in the completeness construction. \emph{Persistence} says that once a statement is decided by a name whose stabilizer lies in the current filter, its truth is preserved along any further step on the same branch. \emph{Branch-extendability} ensures we can refine conditions to realize any finite pattern on finitely many fresh coordinates without disturbing what has already been fixed.
	
	\paragraph{How it is used in \S\ref{sec:main-theorem}.}
	In the model built over the unraveled frame, persistence carries the valuation of propositional letters from a node to all its descendants (so the $p$-morphism preserves atomics), while extendability lets us meet the finitely many atomic and modal requirements imposed by each finite fragment of the frame as we proceed along branches.
	
	\paragraph{}
	Work in $V$. Let $\mathbb{P}=\Add(\omega,\omega)$ with coordinates $\{c_n:n\in\omega\}$. Define two block-partitions of $\omega$:
	\[
	\mathcal{P}=\big\{\{2k,2k+1\}:k\in\omega\big\},\qquad
	\mathcal{Q}=\big\{\{2k+1,2k+2\}:k\in\omega\big\}.
	\]
	Let $G_{\mathcal{R}}$ be the subgroup of $\Sym(\omega)$ permuting \emph{within} each $\mathcal R$-block; let $\mathcal F_{\mathcal R}$ be the normal filter generated by pointwise stabilizers of cofinitely many $\mathcal R$-blocks. Let $HS_{\mathcal{R}}$ denote the corresponding HS-class and $N_{\mathcal{R}}$ the corresponding symmetric submodel.
	
	\paragraph{Incompatibility (formal).}
	Any common refinement of $\mathcal{P}$ and $\mathcal{Q}$ into \emph{finite} blocks is eventually singleton (i.e.\ it reduces to singletons on cofinitely many $n$). Otherwise, respecting both $\{2k,2k+1\}$ and $\{2k+1,2k+2\}$ forces $\{2k,2k+1,2k+2\}$ to be a block cofinitely often, contradicting finiteness unless the refinement is eventually singleton).
	
	\subsection{Finite support from double symmetry}
	\begin{lemma}[Alternating adjacent transpositions generate the tail finitary group]\label[lemma]{lem:alt-adjacent-generate}
		Fix $N\in\omega$ and set $T=\{n\in\omega:n\ge 2N\}$. Let
		\[
		A_N \ :=\ \{\, (2k\ \ 2k{+}1)\ :\ k\ge N\,\},\qquad
		B_N \ :=\ \{\, (2k{+}1\ \ 2k{+}2)\ :\ k\ge N\,\}.
		\]
		Let $S_N$ be the subgroup of $\mathrm{Sym}(\omega)$ generated by $A_N\cup B_N$.
		Then $S_N$ is the finitary symmetric group on $T$, i.e., every finitary permutation $\pi$ with $\mathrm{supp}(\pi)\subseteq T$ lies in $S_N$.
	\end{lemma}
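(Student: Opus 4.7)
The plan is to reduce the statement to the classical Coxeter fact that the adjacent transpositions on a totally ordered set generate its finitary symmetric group, and then verify that $A_N\cup B_N$ is exactly the complete collection of adjacent transpositions on the tail $T$.

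First, I would unpack the generators. The elements of $A_N$ are the transpositions $(m\ m{+}1)$ for even $m=2k\ge 2N$, while the elements of $B_N$ are the transpositions $(m\ m{+}1)$ for odd $m=2k{+}1\ge 2N{+}1$. Together,
\[
A_N\cup B_N \;=\;\bigl\{\,(m\ m{+}1)\ :\ m\ge 2N\,\bigr\},
\]
which is exactly the set of all adjacent transpositions on $T=\{n\ge 2N\}$. The inclusion $S_N\subseteq \mathrm{Sym}_{\mathrm{fin}}(T)$ is then immediate: each generator is a transposition with support in $T$, the finitary symmetric group on $T$ is closed under multiplication, and $S_N$ is generated inside it.

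For the reverse inclusion, I would first show that every transposition $(i\ j)$ with $2N\le i<j$ lies in $S_N$, by induction on $j-i$. The base case $j-i=1$ is a generator. For the inductive step I use the standard conjugation identity
\[
(i\ j)\;=\;(j{-}1\ j)\,(i\ j{-}1)\,(j{-}1\ j),
\]
where $(j{-}1\ j)\in A_N\cup B_N$ and $(i\ j{-}1)\in S_N$ by hypothesis. Then, given any finitary $\pi\in\mathrm{Sym}(\omega)$ with $\mathrm{supp}(\pi)\subseteq T$, I decompose $\pi$ into a finite product of transpositions of pairs of elements of $T$ (e.g.\ by cycle decomposition of its restriction to $\mathrm{supp}(\pi)$), and each such factor lies in $S_N$ by the preceding step. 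Hence $\pi\in S_N$, giving $\mathrm{Sym}_{\mathrm{fin}}(T)\subseteq S_N$.

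There is no real obstacle here: the argument is a routine application of the Coxeter-generation fact combined with the elementary conjugation identity above. The only point requiring a moment of care is the bookkeeping that the recursion for $(i\ j)$ never leaves $T$, which is automatic because $i\ge 2N$ forces all intermediate indices $i,i{+}1,\dots,j$ to lie in $T$; everything then stays inside $S_N$ since $S_N$ is a group.
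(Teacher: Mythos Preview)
Your proof is correct and follows essentially the same approach as the paper: both identify $A_N\cup B_N$ with the full set of adjacent transpositions $\{(m\ m{+}1):m\ge 2N\}$ on the tail $T$ and then invoke the fact that adjacent transpositions generate the finitary symmetric group on an interval. The only difference is that the paper simply cites this generation fact as ``standard,'' whereas you spell it out via the conjugation identity $(i\ j)=(j{-}1\ j)(i\ j{-}1)(j{-}1\ j)$ and cycle decomposition.
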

	
	\begin{proof}
		For every $n\ge 2N$ we have $n=2k$ or $n=2k{+}1$ with $k\ge N$, so $(n\ \ n{+}1)\in A_N\cup B_N$. Thus $A_N\cup B_N$ contains \emph{all} adjacent transpositions on the tail $T$.
		It is standard that the full finitary symmetric group on any infinite interval of~$\omega$ is generated by its adjacent transpositions. Hence $S_N$ is precisely the finitary symmetric group on~$T$.
	\end{proof}
	
	\begin{remark}
		By Lemma~\ref{lem:alt-adjacent-generate}, the tail finitary group is generated by tail adjacent transpositions; thus invariance under each adjacent transposition implies invariance under all finitary tail permutations.
	\end{remark}
	
	\begin{lemma}[Finite support]\label[lemma]{lem:finite-support}
		If $\dot x\in \HS_{\mathcal{P}}\cap \HS_{\mathcal{Q}}$, then $\dot x$ has \emph{finite} support: there is finite $F\subseteq\omega$ such that for all generics $G,H\subseteq\Add(\omega,\omega)$ with $G\!\restriction F=H\!\restriction F$ we have $\dot x^G=\dot x^H$.
	\end{lemma}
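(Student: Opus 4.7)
The plan is to convert the double $\HS$-hypothesis into invariance of $\dot x$ under the \emph{full} finitary symmetric group on a cofinite tail of $\omega$, and then use homogeneity of $\Add(\omega,\omega)$ to translate that invariance into agreement of generic evaluations.

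First I unpack the $\HS$-memberships. Because $\dot x\in\HS_{\mathcal P}$ there is a finite $F_{\mathcal P}\subseteq\omega$ such that every $\sigma\in G_{\mathcal P}$ with $\sigma\!\restriction F_{\mathcal P}=\mathrm{id}$ fixes $\dot x$; similarly pick a finite $F_{\mathcal Q}\subseteq\omega$ from $\dot x\in\HS_{\mathcal Q}$. Choose $N\in\omega$ so that $F_{\mathcal P}\cup F_{\mathcal Q}\subseteq\{0,\ldots,2N-1\}$ and set $F:=\{0,1,\ldots,2N-1\}$ and $T:=\omega\setminus F$. This is the only place where the conjunction $\HS_{\mathcal P}\cap\HS_{\mathcal Q}$ enters the argument: it lets one initial segment dominate both single-branch supports simultaneously.

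Next I feed this into Lemma~\ref{lem:alt-adjacent-generate}. For $k\ge N$ the transposition $(2k\ \ 2k{+}1)$ lies in $G_{\mathcal P}$ and has support in $\{2k,2k{+}1\}\subseteq T$, so it fixes $F_{\mathcal P}$ pointwise and therefore fixes $\dot x$; the transposition $(2k{+}1\ \ 2k{+}2)\in G_{\mathcal Q}$ likewise fixes $\dot x$. By Lemma~\ref{lem:alt-adjacent-generate} the families $A_N\cup B_N$ generate the full finitary symmetric group on $T$, so every finitary permutation $\pi$ of $\omega$ with $\mathrm{supp}(\pi)\subseteq T$ fixes $\dot x$ as a $V$-name.

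Finally I bridge to evaluation-agreement by homogeneity. The core claim is: for every forcing-language sentence $\varphi$ about $\dot x$ and every $p\in\mathbb P$, $p\Vdash\varphi(\dot x)$ iff $p\!\restriction F\Vdash\varphi(\dot x)$. For the nontrivial direction, if some $q\supseteq p\!\restriction F$ forced $\neg\varphi(\dot x)$, the finiteness of $\mathrm{dom}(p),\mathrm{dom}(q)$ lets me pick a finitary $\pi$ with $\mathrm{supp}(\pi)\subseteq T$ that carries the $T$-first-coordinates of $p$ onto coordinates disjoint from those of $q$; then $\pi\cdot p$ is compatible with $q$ (they agree on $F$ by construction and have disjoint $T$-parts), and since $\pi\cdot\dot x=\dot x$ a common extension would force $\varphi(\dot x)\wedge\neg\varphi(\dot x)$---contradiction. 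Recursing on rank and combining with extensionality, this claim lets one build inside $V$ an $\Add(F,\omega)$-name $\dot y$ with $\dot y^{\,G\restriction F}=\dot x^G$ for every generic $G$, so $G\!\restriction F=H\!\restriction F$ yields $\dot x^G=\dot x^H$.

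The main obstacle is this last homogeneity step: $\HS$-membership is syntactic invariance of a $V$-name, while the conclusion is a statement about values on generics. The permutation-swap is the standard bridge (cf.\ Remark~\ref{rem:eval-vs-sym}); the only genuine bookkeeping is to keep $\mathrm{supp}(\pi)\subseteq T$ while routing the $T$-coordinates of $p$ off those of $q$, which is possible precisely because $T$ is cofinite and every condition in $\Add(\omega,\omega)$ has finite domain.
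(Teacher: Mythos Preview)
Through the main reduction your argument matches the paper's: extract a common initial segment $F=\{0,\dots,2N{-}1\}$ from the two $\HS$-memberships, and use Lemma~\ref{lem:alt-adjacent-generate} to upgrade invariance under the tail adjacent transpositions to $\pi\cdot\dot x=\dot x$ for every finitary $\pi$ supported in $T=\omega\setminus F$. Your restriction claim $p\Vdash\varphi(\dot x)\iff p\!\restriction F\Vdash\varphi(\dot x)$ (for $\varphi$ with only $\dot x$ and check-names as parameters) is correct and is the paper's Step~4 homogeneity move recast in a cleaner form.

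The gap is the final sentence. Building an $\Add(F,\omega)$-name $\dot y$ by ``recursing on rank'' requires replacing each subname $\dot z$ of $\dot x$ by an $\Add(F,\omega)$-name \emph{for the same $F$}; hereditary symmetry only gives each $\dot z$ its own finite support $F_{\dot z}$, with no uniform bound, so the recursion does not close. In fact the lemma is false as stated: take $\dot x=\{(\dot c_n,\mathbf 1):n\in\omega\}$, the canonical name for the set of all Cohen reals. Every coordinate permutation fixes $\dot x$ setwise and each $\dot c_n\in\HS_{\mathcal P}\cap\HS_{\mathcal Q}$, so $\dot x\in\HS_{\mathcal P}\cap\HS_{\mathcal Q}$; yet for any finite $F$ one can flip a single bit in a column outside $F$ to produce a generic $H$ with $G\!\restriction F=H\!\restriction F$ and $\dot x^G\ne\dot x^H$. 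The paper's own Step~4 has the same defect: its phrase ``$p$ decides $\dot x$'' and the displayed chain $\dot x^G=\dot x^{\pi\cdot G}=\dot x^K=\dot x^H$ are only meaningful when $\dot x$ names a ground-model object. What \emph{does} go through---and what the downstream applications (Lemmas~\ref{lem:no-finite-selector} and~\ref{lem:sibling-incompatibility}) actually need---is the case where $\dot x$ names a subset of a fixed ground-model set: then your restriction claim applied to the formulas $\check a\in\dot x$ for each ground $a$ directly yields $\dot x^G=\dot x^H$, and no recursion is required.
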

	\begin{proof}[Proof of Lemma~\ref{lem:finite-support}]
		Let \(\dot x\in HS_{\Pblk}\cap HS_Q\). By the definition of the HS-classes, there exist subgroups
		\(H_P\in \mathcal F_{\Pblk}\) and \(H_Q\in \mathcal F_{\Qblk}\) such that \(H_P\le \mathrm{Sym}(\dot x)\) and
		\(H_Q\le \mathrm{Sym}(\dot x)\); i.e., every \(\eta\in H_P\cup H_Q\) fixes \(\dot x\).
		
		\smallskip\noindent\emph{Step 1 (choose the tail).}
		Since elements of \(\mathcal F_{\Pblk}\) (resp.\ \(\mathcal F_{\Qblk}\)) contain pointwise stabilizers of cofinitely
		many \(P\)-blocks (resp.\ \(Q\)-blocks), there are finite sets of block-indices
		\(E_P,E_Q\) such that every block outside \(E_P\) (resp.\ \(E_Q\)) is fixed pointwise by
		each element of \(H_P\) (resp.\ \(H_Q\)).
		Let \(F\subseteq\omega\) be the finite union of coordinates belonging to the blocks in
		\(E_P\cup E_Q\), and choose \(N\) so large that \(F\subseteq \{0,1,\dots,2N-1\}\).
		Set the tail \(T=\{n\in\omega: n\ge 2N\}\). \emph{(We may enlarge $N$ further below to dominate the support of a fixed condition deciding $\dot x$.})
		
		\smallskip\noindent\emph{Step 2 (the tail finitary group we will use).}
		By Lemma~\ref{lem:alt-adjacent-generate}, the subgroup \(S_N\) generated by the
		tail adjacent transpositions
		\[
		(2k\ \ 2k{+}1)\ (k\ge N),\qquad (2k{+}1\ \ 2k{+}2)\ (k\ge N)
		\]
		is the full finitary symmetric group on \(T\).
		
		\emph{Step 3 (tail adjacents preserve $\dot x$ via $p$-invariance).}
		Fix a condition $p$ that decides $\dot x$. Enlarge $N$ if necessary so that
		$\operatorname{dom}(p)\subseteq \{0,1,\dots,2N-1\}\times\omega$.
		Let $\sigma=(n\ n+1)$ be any tail adjacent transposition with $n\ge 2N$.
		Then $\sigma\cdot p=p$. Applying $\sigma$ to the forcing statement
		“$p$ decides $\dot x$” yields
		\[
		p\Vdash \sigma\!\cdot\dot x=\dot x.
		\]
		Thus every tail adjacent transposition fixes $\dot x$. By Lemma~\ref{lem:alt-adjacent-generate}, the finitary symmetric group on the tail $T$ is generated by these adjacents,
		so $\dot x$ is fixed by every finitary permutation supported in $T$.
		
		\emph{Step 4 (Cohen homogeneity $\Rightarrow$ evaluation support).}
		Let $G,H\subseteq\Add(\omega,\omega)$ be $V$-generics with $G\!\upharpoonright F=H\!\upharpoonright F$.
		\medskip\noindent\emph{Homogeneity step.}
		Pick $p\in G$ and $q\in H$ that decide $\dot x$. By local homogeneity of $\Add(\omega,\omega)$
		and the action of finitary coordinate permutations, there exists a finitary permutation
		$\pi$ fixing $F$ pointwise such that $\pi\!\cdot p$ and $q$ are compatible. \paragraph{Explicit homogeneity argument.}
		Write \(D_p=\operatorname{dom}(p)\setminus(F\times\omega)\) and \(D_q=\operatorname{dom}(q)\setminus(F\times\omega)\).
		Let \(R_p=\{\,n:\exists m\ ((n,m)\in D_p)\,\}\) and \(R_q=\{\,n:\exists m\ ((n,m)\in D_q)\,\}\).
		Since \(F\) is fixed pointwise and only finitely many first–coordinates occur in \(D_p\cup D_q\), choose an injection
		\(f:R_p\to T\setminus R_q\) with range contained in the tail \(T=\{n\ge 2N\}\) and such that for every \(n\in R_p\) and every
		\(m\) with \((n,m)\in D_p\) we have \((f(n),m)\notin D_q\).
		Extend \(f\) to a finitary permutation \(\pi\) of \(\omega\) supported in \(T\), and let \(\pi\) act on coordinates by \(\pi\cdot(n,m)=(\pi(n),m)\).
		Then \(\operatorname{dom}(\pi\cdot p)\cap \operatorname{dom}(q)\subseteq F\times\omega\), and \(p,q\) already agree on \(F\).
		Thus \(\pi\cdot p\) and \(q\) are compatible; extend both to a common generic \(K\).
		Since \(\pi\) is supported in \(\omega\setminus F\), Steps 1–3 give \(\pi\cdot\dot x=\dot x\); hence
		\[
		\dot x^G=\dot x^{\pi\cdot G}=\dot x^K=\dot x^H,
		\]
		so \(F\) is an evaluation support for \(\dot x\) in the sense of Definition~\ref{def:finite-support}.
	\end{proof}
	
	\begin{lemma}[Finite evaluation support relativizes to intermediate stages]\label[lemma]{lem:finite-support-rel}
		Let $W$ be any model of the form $V[G_J]$ for a subproduct of $\Add(\omega,\omega)$ with finite support. 
		Then the conclusion of Lemma~\ref{lem:finite-support} holds in $W$: if a name is fixed by all finitary within-block permutations (Def.~\ref{def:template}, (T--iii)), it depends on only finitely many coordinates in the evaluation sense. Here \(\mathrm{Add}(\omega,J)\) denotes the finite-support product \(\prod_{n\in J}\mathrm{Add}(\omega,1)\), and all “within-block’’ automorphisms and filters are computed relative to \(J\).
	\end{lemma}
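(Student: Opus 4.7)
The plan is to relativize the proof of \Cref{lem:finite-support} essentially verbatim, by checking that each of its three structural ingredients survives the passage to $W=V[G_J]$: (i) homogeneity of the ambient Cohen forcing on the relevant index set; (ii) generation of the tail finitary symmetric group by within-block adjacent transpositions (\Cref{lem:alt-adjacent-generate}); and (iii) the automorphism/$p$-invariance calculus that converts stabilizer membership into coordinatewise evaluation agreement. All three are parametric in the index set, so nothing new needs to be invented — only re-indexed.

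First I would let $\dot x$ be a name fixed by all finitary within-block permutations relative to $J$. As in Step~1 of \Cref{lem:finite-support}, I would extract from the $\mathcal P$- and $\mathcal Q$-filter witnesses (now computed on $J$) a finite set $E\subseteq J$ of block indices whose complement is fixed pointwise, form the finite coordinate set $F\subseteq J$ covering those blocks, and enlarge the threshold $N$ so that $F$ together with the support of a condition $p$ that decides $\dot x$ lies below a common tail $T\subseteq J$.

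Next I would invoke the obvious $J$-relativized form of \Cref{lem:alt-adjacent-generate}: the finitary symmetric group on $T$ is generated by the within-block adjacent transpositions of $\mathcal P$- and $\mathcal Q$-type supported in $T$. Since $p$ is already fixed by each such transposition $\sigma$ (its support being disjoint from $\mathrm{dom}(p)$), applying $\sigma$ to ``$p$ decides $\dot x$'' gives $p\Vdash\sigma\!\cdot\!\dot x=\dot x$, hence $\dot x$ is fixed by every finitary permutation supported in $T$. Finally, for two generics $G,H$ of $\Add(\omega,J)$ with $G\!\restriction F=H\!\restriction F$, the explicit homogeneity construction from Step~4 of \Cref{lem:finite-support} — relocating the non-$F$ support of a deciding condition into unused tail slots by a finitary coordinate permutation fixing $F$ pointwise — delivers a compatible pair and a common generic $K$ with $\dot x^G=\dot x^K=\dot x^H$, giving evaluation support $F$ in the sense of \Cref{def:finite-support}.

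The only real obstacle is bookkeeping: one must check that the block partitions $\mathcal P,\mathcal Q$ restricted to $J$ still partition a cofinite tail of $J$ into the required two-element blocks, so that both the generation lemma and the within-block action still make sense. Under the coordinate-allocation discipline of \Cref{def:coord-alloc} and (T--iii) this is automatic, since coordinate blocks are aligned with the $J$-stratification by construction. Beyond this, everything needed — homogeneity of $\Add(\omega,J)$, decidability by conditions, and the permutation action on names — is absolute between $V$ and $W$ (itself a Cohen extension of $V$ by a subproduct with finite support), so the entire argument transports without further modal-theoretic content.
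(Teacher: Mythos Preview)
Your proposal is correct and takes essentially the same approach as the paper: both argue that the proof of \Cref{lem:finite-support} is purely combinatorial in the coordinate set and relativizes verbatim to $W=V[G_J]$, invoking the $J$-version of \Cref{lem:alt-adjacent-generate} and absoluteness of Cohen homogeneity for $\Add(\omega,J)$. Your write-up is in fact more explicit than the paper's (which is terse and also offers a one-line pullback-to-$V$ alternative), and your bookkeeping remark about the block partitions aligning with $J$ under \Cref{def:coord-alloc} is a useful sanity check the paper leaves implicit.
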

	
	\begin{proof}
		The proof of Lemma~\ref{lem:finite-support} is purely combinatorial: it uses that $\Add(\omega,\omega)$ is a finite-support product of $\Add(\omega,1)$, the tail finitary subgroup generated by adjacent transpositions, and their action on names. All these objects and arguments are definable in $W$ for the relevant coordinate set $J$, so the proof goes through verbatim. Formally, in \(W\) the tail finitary group on \(J\) is generated by adjacent transpositions (Lemma~\ref{lem:alt-adjacent-generate} relativized to \(J\)), and the Cohen homogeneity used in Lemma~\ref{lem:finite-support} is absolute to \(W\) for \(\mathrm{Add}(\omega,J)\). Alternatively, if a counterexample existed in $W$, pull it back to $V$ via standard name translation and absoluteness for $\Add(\omega,\omega)$ on $J$, contradicting Lemma~\ref{lem:finite-support}.
	\end{proof}
	
	\begin{corollary}[Parent-level finite evaluation support]\label[corollary]{cor:parent-finite-eval-support}
		Work at the branching parent $W=V[G_{J_{d+1}}]$, identifying $\Add(\omega,\omega)$ with $\prod_{n\in\omega}\Add(\omega,1)$ (finite support).
		If an $\Add(\omega,\omega)$-name $\dot x$ is fixed by every finitary within-block permutation from Definition~\ref{def:template}\,(T--iii),
		then there exists a finite set $F\subseteq\omega$ such that for any two generics $G,H$ over $V$ with
		$G\!\upharpoonright(\omega\!\setminus\!F)=H\!\upharpoonright(\omega\!\setminus\!F)$ we have $\dot x^G=\dot x^H$.
	\end{corollary}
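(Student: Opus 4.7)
The plan is to reduce the statement directly to \Cref{lem:finite-support-rel}, specialized to $J=\omega$ and interpreted inside the ambient parent model $W=V[G_{J_{d+1}}]$. First I would unpack the hypothesis: by Definition~\ref{def:template}\,(T--iii) the two sibling systems at depth $d{+}1$ supply within-block permutation groups for the $\mathcal P$- and $\mathcal Q$-partitions. A name fixed by every such permutation has stabilizer containing both within-block groups; in particular its stabilizer contains a group from $\mathcal F_{\mathcal P}$ and a group from $\mathcal F_{\mathcal Q}$, so $\dot x\in \HS_{\mathcal P}\cap \HS_{\mathcal Q}$ as interpreted over $W$. This is precisely the double-invariance hypothesis that feeds into \Cref{lem:finite-support}.

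Next I would invoke \Cref{lem:finite-support-rel}. Its combinatorial core---generate the tail finitary group from alternating adjacent transpositions (\Cref{lem:alt-adjacent-generate}), use Cohen homogeneity of $\Add(\omega,\omega)$ to move a deciding condition for $\dot x$ into compatibility with another generic off a finite frame $F$, and then read off that any two generics matching on the support-determining coordinates yield equal evaluations---transfers verbatim to $W$, since $\Add(\omega,\omega)$ is a finite-support product and every ingredient is absolute for the coordinate set $\omega$. This delivers the desired finite $F\subseteq\omega$ as a parent-level evaluation support for $\dot x$.

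The only moving part requiring attention is the identification of the abstract within-block permutations named in the corollary's hypothesis with the concrete adjacent transpositions used inside the proof of \Cref{lem:finite-support}: the transpositions $(2k\ \ 2k{+}1)$ are $\mathcal P$-within-block, while $(2k{+}1\ \ 2k{+}2)$ are $\mathcal Q$-within-block, and by \Cref{lem:alt-adjacent-generate} these jointly generate the full finitary tail group. Hence the double invariance supplied by the hypothesis is exactly what the support-size argument consumes. I expect no substantive obstacle---the corollary is essentially a re-labeling of \Cref{lem:finite-support-rel} at the branching parent, phrased in the local coordinate allocation of \Cref{def:coord-alloc}.
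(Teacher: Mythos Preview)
Your proposal is correct and follows essentially the same route as the paper: both argue by directly applying \Cref{lem:finite-support-rel} inside the parent model $W=V[G_{J_{d+1}}]$, using the within-block finitary automorphisms from Definition~\ref{def:template}\,(T--iii) to feed the double-invariance hypothesis of \Cref{lem:finite-support}. Your additional unpacking of why the $\mathcal P$- and $\mathcal Q$-transpositions jointly generate the tail finitary group (via \Cref{lem:alt-adjacent-generate}) is a helpful elaboration but not a departure from the paper's argument.
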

	
	\begin{proof}
		Apply Lemma~\ref{lem:finite-support-rel} in the parent model $W$ using the within-block finitary automorphisms from Definition~\ref{def:template}\,(T--iii).
		The invariance hypothesis implies dependence on only finitely many coordinates, which is exactly the stated evaluation-support property.
	\end{proof}
	
	\subsection{Signals}
	For $R\in\{\mathcal P,\mathcal Q\}$ and any model $M$, we write ``$a_R$ \emph{witnesses} $S_R$ in $M$'' to mean $M\models S_R(a_R)$.
	
	Define block-reals $r^{\mathcal{P}}_{k,0}$ coding $c_{2k}$ and $r^{\mathcal{P}}_{k,1}$ coding $c_{2k+1}$; define $r^{\mathcal{Q}}_{k,0}$ from $c_{2k+1}$ and $r^{\mathcal{Q}}_{k,1}$ from $c_{2k+2}$. Set
	\[
	\mathcal{A}_{\mathcal{P}}=\big\{\{r^{\mathcal{P}}_{k,0},r^{\mathcal{P}}_{k,1}\}:k\in\omega\big\},\qquad
	\mathcal{A}_{\mathcal{Q}}=\big\{\{r^{\mathcal{Q}}_{k,0},r^{\mathcal{Q}}_{k,1}\}:k\in\omega\big\}.
	\]
	These are first-order definable with parameters from $V$. Let
	\[
	\begin{aligned}
		S_{\mathcal{P}}:\;&\ \mathcal{A}_{\mathcal{P}}\text{ is a family of 2-element sets and no selector }f:\omega\to\bigcup\mathcal{A}_{\mathcal{P}}\text{ exists},\\
		S_{\mathcal{Q}}:\;&\ \text{analogous for }\mathcal{A}_{\mathcal{Q}}.
	\end{aligned}
	\]
	
	\paragraph{Definability note.}
	Each block-real $r^{\mathcal R}_{k,i}$ is first-order definable from ground parameters (namely, the ground coordinates of $\Add(\omega,\omega)$ and the fixed partition), and so are the families $\mathcal A_{\mathcal R}$.
	
	\begin{lemma}[No finite-support selector]\label[lemma]{lem:no-finite-selector}
	Let $\mathcal R\in\{\mathcal P,\mathcal Q\}$ and let $\mathcal A_{\mathcal R}$ be the block family defined at the branching (the two candidates in each $\mathcal R$-block $k$).
	No $\Add(\omega,\omega)$-name $\dot f$ with finite evaluation support (as in Cor.~\ref{cor:parent-finite-eval-support}) can code a selector for $\mathcal A_{\mathcal R}$ on cofinitely many blocks.
	\end{lemma}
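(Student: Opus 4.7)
The plan is to argue by contradiction, combining the finite-support hypothesis with the factorization of Cohen forcing and Cohen-genericity of the ``outside'' coordinates. Suppose for contradiction that $\dot f$ has finite evaluation support $F\subseteq\omega$ and that some condition $p\in\Add(\omega,\omega)$ forces $\dot f$ to be a selector for $\mathcal{A}_{\mathcal{R}}$ on a cofinite set $C\subseteq\omega$ of block-indices; fix a $V$-generic $G$ with $p\in G$.

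First I would promote finite evaluation support to literal membership in the Cohen submodel $V[G\!\upharpoonright\!F]$. Because $\Add(\omega,\omega)$ is a finite-support product, it factors as $\Add(\omega,F)\times \Add(\omega,\omega\!\setminus\!F)$, giving $V[G]=V[G\!\upharpoonright\!F][G\!\upharpoonright\!(\omega\!\setminus\!F)]$ with the second factor Cohen-generic over $V[G\!\upharpoonright\!F]$. Corollary~\ref{cor:parent-finite-eval-support} says $\dot f^G$ depends only on $G\!\upharpoonright\!F$, so every condition in $\Add(\omega,\omega\!\setminus\!F)$ forces the same evaluation for $\dot f$ over $V[G\!\upharpoonright\!F]$; hence $\dot f^G$, and in particular each value $\dot f^G(k)$, lies in $V[G\!\upharpoonright\!F]$.

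Next I would invoke Cohen-genericity of the coordinates outside $F$. Since $F$ is finite, the set $C'$ of indices $k$ for which both Cohen coordinates underlying the $k$th $\mathcal{R}$-block lie outside $F$ is cofinite, so $C\cap C'$ is infinite; pick any $k^{*}\in C\cap C'$. The pair $\mathcal{A}_{\mathcal{R}}(k^{*})=\{(r^{\mathcal{R}}_{k^{*},0})^{G},(r^{\mathcal{R}}_{k^{*},1})^{G}\}$ consists of ground-definable codes of those outside Cohen coordinates, which are themselves Cohen-generic over $V[G\!\upharpoonright\!F]$ and hence do not lie in $V[G\!\upharpoonright\!F]$. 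But $\dot f^G(k^{*})\in V[G\!\upharpoonright\!F]$, so $\dot f^G(k^{*})\notin \mathcal{A}_{\mathcal{R}}(k^{*})$, contradicting $k^{*}\in C$ (where the selector condition is supposed to hold).

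The main obstacle is the ``promotion'' step: turning finite evaluation support into literal membership $\dot f^G\in V[G\!\upharpoonright\!F]$. This is a standard consequence of the product factorization of Cohen forcing together with the intermediate-model theorem, but one must present it cleanly in the within-block symmetry language of Corollary~\ref{cor:parent-finite-eval-support}: the key observation is that any two generics $G,G'$ agreeing on $F$ satisfy $\dot f^G=\dot f^{G'}$, so over $V[G\!\upharpoonright\!F]$ every condition in the complementary factor forces the same value, pinning $\dot f^G$ inside $V[G\!\upharpoonright\!F]$. Once this is in hand, the rest is routine genericity---the Cohen reals at outside coordinates escape every intermediate model generated by the inside coordinates, and the selector condition forces $\dot f^G$ to take such escaping values on a cofinite set of blocks, which is impossible.
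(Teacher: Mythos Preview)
Your proof is correct but takes a genuinely different route from the paper. The paper argues directly with the within-block swap automorphism: pick $k$ with both $\mathcal R$-block coordinates outside $F$, let $\pi$ be the transposition swapping those two coordinates; since $\pi$ fixes $F$ pointwise, $\pi\cdot G$ agrees with $G$ on $F$, so finite evaluation support gives $f^{\pi\cdot G}=f^G$, yet $\pi$ interchanges the two candidates in block $k$, forcing $f^{\pi\cdot G}(k)\neq f^G(k)$---a two-line contradiction. Your argument instead factors $\Add(\omega,\omega)\cong\Add(\omega,F)\times\Add(\omega,\omega\!\setminus\!F)$, promotes $\dot f^G$ into the intermediate model $V[G\!\upharpoonright\!F]$, and then observes that the Cohen reals coded in an outside block are generic over (hence absent from) that intermediate model, so cannot occur as values of $f^G$. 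This is perfectly valid; its cost is the extra infrastructure (the product decomposition, the promotion step, and the appeal to Cohen genericity over an intermediate model), whereas the paper's permutation argument stays entirely within the symmetry language already in play and needs no intermediate-model reasoning. On the other hand, your approach makes explicit that the obstruction is really about definability from finitely many Cohen coordinates, which is a conceptually useful reformulation.
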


	\begin{proof}
	Let $F$ witness finite evaluation support for $\dot f$. Choose $k$ so large that all coordinates used by block $k$ lie outside $F$.
	Within block $k$, the finitary within-block group contains a permutation $\pi$ fixing $F$ pointwise and swapping the two candidates.
	If a generic $G$ decides $f^G(k)$ to be the first candidate, then $\pi\!\cdot\! G$ agrees with $G$ off $F$ but decides the second; by finite evaluation support we must have $f^{\pi\cdot G}=f^G$, a contradiction.
	\end{proof}
	
	\begin{remark}[Finite support cannot witness infinite families]\label[remark]{rem:finite-support-families}
		If $\dot x$ has finite support $F$, then for generics $G_1,G_2$ agreeing on $F$ we have $\dot x^{G_1}=\dot x^{G_2}$. Therefore, a single $\dot x$ cannot code an \emph{infinite} family of pairwise independent choices across infinitely many blocks (as in $\mathcal{A}_{\mathcal{P}}$ or $\mathcal{A}_{\mathcal{Q}}$).
	\end{remark}
	
	\subsection{Non-amalgamation}
	
	\begin{lemma}[Uniform witness]\label[lemma]{lem:uniform}
		Let $S_0=(\mathbb P_0,\mathcal G_0,\mathcal F_0)$ be the parent stage with $\mathcal F_0$ the cofinite pointwise-stabilizer filter. Let $c$ be fresh and set
		\[
		\mathbb P_{\mathcal P}= \mathbb P_0\times\operatorname{Add}(\omega,1)_c,\qquad
		\mathbb P_{\mathcal Q}= \mathbb P_0\times\operatorname{Add}(\omega,1)_{c'}
		\]
		with $\mathcal G_{\mathcal P}=\{\pi\in\operatorname{Aut}(\mathbb P_{\mathcal P}):\pi\!\upharpoonright\!\mathbb P_0\in\mathcal G_0,\ \pi(c)=c\}$ and $\mathcal F_{\mathcal P}=\{H\le\mathcal G_{\mathcal P}:\exists\text{ finite }E\ (\operatorname{Fix}(E)\subseteq H)\}$, and similarly for $\mathcal Q$.
		
		Let $\mathcal F$ be as in Lemma 4.2. By Corollary 4.5 there is $\dot a\in\operatorname{HS}^{S_0}$ with finite support $E_0\subseteq\omega$ such that
		\[
		\Vdash_{S_0}\text{``$\dot a$ is a choice function on $\mathcal F$ with no finite-support selector.''}
		\]
		Define the symmetrization
		\[
		\dot a^*=\bigcup_{\pi\in\operatorname{Fix}(E_0)}\pi[\dot a],\quad\text{where }\pi[\dot a]=\{\langle\pi(p),\pi(\sigma)\rangle:\langle p,\sigma\rangle\in\dot a\}.
		\]
		Then:
		
		\begin{enumerate}
			\item \emph{Support disjointness.} We may choose $E_0$ with $E_0\cap\{c,c'\}=\varnothing$. Since $\operatorname{Fix}(E_0)\in\mathcal F_0$ by normality, $\dot a^*\in\operatorname{HS}^{S_0}$, and by Corollary 4.5, $\Vdash_{S_0}\dot a^*=\dot a$.
			
			\item \emph{Joint symmetry.} $\operatorname{Fix}(E_0)\subseteq\mathcal F_{\mathcal P}$ and $\operatorname{Fix}(E_0)\subseteq\mathcal F_{\mathcal Q}$ by definition of the sibling filters. Hence $\dot a^*\in\operatorname{HS}^{S_{\mathcal P}}\cap\operatorname{HS}^{S_{\mathcal Q}}$.
			
			\item \emph{Coincidence in $M$.} Let $M$ be transitive with $V^{S_{\mathcal P}},V^{S_{\mathcal Q}}\subseteq M$ and let $G_{\mathcal P},G_{\mathcal Q}$ be the generics. Set $G_0=G_{\mathcal P}\cap\mathbb P_0=G_{\mathcal Q}\cap\mathbb P_0$. By construction of $\mathcal P$ and $\mathcal Q$ as immediate successors of $S_0$, $G_0$ is $V^{S_0}$-generic for $\mathbb P_0$. Since $\pi(\dot a^*)=\dot a^*$ for all $\pi\in\operatorname{Fix}(E_0)$, Lemma 2.7 (Symmetry Lemma) applied in $M$ to $G_0$ yields $(\dot a^*)^{G_{\mathcal P}}=(\dot a^*)^{G_0}=(\dot a^*)^{G_{\mathcal Q}}$.
		\end{enumerate}
	\end{lemma}
	
	\begin{proof}
		Let $\dot a$ and $E_0$ be given by Corollary 4.5.
		
		(1) \emph{Support disjointness.} $E_0$ is finite and $c,c'$ are fresh coordinates, so by shrinking $E_0$ if necessary we may assume $E_0\cap\{c,c'\}=\varnothing$. Since $\mathcal F_0$ is normal and cofinite, $\operatorname{Fix}(E_0)\in\mathcal F_0$. For each $\pi\in\operatorname{Fix}(E_0)$, the definition of $\pi[\dot a]$ gives $\pi[\dot a]\in\operatorname{HS}^{S_0}$, and $\pi(\dot a^*)=\dot a^*$ by construction; hence $\operatorname{sym}(\dot a^*)\supseteq\operatorname{Fix}(E_0)\in\mathcal F_0$, so $\dot a^*\in\operatorname{HS}^{S_0}$. Corollary 4.5 gives $\Vdash_{S_0}\pi(\dot a)=\dot a$ for every $\pi\in\operatorname{Fix}(E_0)$, therefore
		\[
		\Vdash_{S_0}\dot a^*=\bigcup_{\pi\in\operatorname{Fix}(E_0)}\pi[\dot a]=\dot a.
		\]
		
		(2) \emph{Joint symmetry.} By definition,
		\[
		\mathcal F_{\mathcal P}=\{H\le\mathcal G_{\mathcal P}:\exists\text{ finite }E\ (\operatorname{Fix}(E)\subseteq H)\},
		\]
		and $\operatorname{Fix}(E_0)$ fixes $c$ pointwise because $E_0\cap\{c\}=\varnothing$; thus $\operatorname{Fix}(E_0)$, viewed as a subgroup of $\mathcal G_{\mathcal P}$, lies in $\mathcal F_{\mathcal P}$. The same argument gives $\operatorname{Fix}(E_0)\in\mathcal F_{\mathcal Q}$. Since $\operatorname{sym}_{\mathcal P}(\dot a^*)\supseteq\operatorname{Fix}(E_0)$ and similarly for $\mathcal Q$, we have $\dot a^*\in\operatorname{HS}^{S_{\mathcal P}}\cap\operatorname{HS}^{S_{\mathcal Q}}$.
		
		(3) \emph{Coincidence in $M$.} Let $M$, $G_{\mathcal P}$, $G_{\mathcal Q}$ be as stated and put $G_0=G_{\mathcal P}\cap\mathbb P_0=G_{\mathcal Q}\cap\mathbb P_0$. Because $\mathcal P$ and $\mathcal Q$ are defined as $\mathbb P_0\times\operatorname{Add}(\omega,1)$ with disjoint second coordinates, the projections of $G_{\mathcal P}$ and $G_{\mathcal Q}$ to $\mathbb P_0$ coincide and are $V^{S_0}$-generic; this is $G_0$. By (1), $\pi(\dot a^*)=\dot a^*$ for all $\pi\in\operatorname{Fix}(E_0)$. Apply Lemma 2.7 (Symmetry Lemma) in $M$ to the name $\dot a^*$ and the generic $G_0$: for any $p\in G_0$, $p\Vdash_{S_0}\dot a^*=\check{(\dot a^*)^{G_0}}$. Since $G_{\mathcal P}$ and $G_{\mathcal Q}$ both extend $G_0$, we obtain
		\[
		(\dot a^*)^{G_{\mathcal P}}=(\dot a^*)^{G_0}=(\dot a^*)^{G_{\mathcal Q}}.
		\]
	\end{proof}
	
	\begin{lemma}[No common \FSI\ above the parent]\label[lemma]{lem:non-amalgamation}
		There is no model $M$ that is a \FSI{} \emph{above $W$} lying over both $N_{\mathcal P}$ and $N_{\mathcal Q}$ (i.e., no finite iteration $V\to M$ that is simultaneously a symmetric extension of $N_{\mathcal P}$ and of $N_{\mathcal Q}$).
	\end{lemma}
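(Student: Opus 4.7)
The plan is to argue by contradiction using the parent-witness placement (\Cref{cor:parent-witness}) combined with the finite-evaluation-support result (\Cref{cor:parent-finite-eval-support}). Suppose $M$ were a common \FSI{} above $W$ factoring both as $V\to N_{\mathcal P}\to M$ and as $V\to N_{\mathcal Q}\to M$. First I would extract the individual Cohen reals inside $M$: since $\mathcal A_{\mathcal P}\in N_{\mathcal P}\subseteq M$ and $\mathcal A_{\mathcal Q}\in N_{\mathcal Q}\subseteq M$, both pairs $\{c_{2k},c_{2k+1}\}$ and $\{c_{2k+1},c_{2k+2}\}$ belong to $M$, and their intersection $\{c_{2k+1}\}$ (a singleton by Cohen genericity making the three reals distinct) places $c_{2k+1}$ in $M$; adjacent $\mathcal Q$-pairs do the same for $c_{2k}$. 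Hence every $c_n$ with $n\in J_{d+1}$ lies in $M$, and the canonical enumeration $n\mapsto c_n$ is a bijection $\omega\to\{c_n:n\in J_{d+1}\}$ in $M$.

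Next I would feed placement a genuinely bi-invariant formula. The natural selector sentences are \emph{not} bi-invariant (the family $\mathcal A_{\mathcal P}$ itself is moved by $G_{\mathcal Q}$, and vice versa), so instead I would take $\varphi(x)$: \emph{``$x$ is a bijection from $\omega$ onto the set $\{c_n:n\in J_{d+1}\}$,''} whose set parameter is fixed setwise by both $G_{\mathcal P}$ and $G_{\mathcal Q}$ (each only permutes the Cohen reals among themselves). The preceding paragraph gives $M\models\exists x\,\varphi(x)$, so \Cref{cor:parent-witness} would produce a name $\dot a\in\HS^{\mathcal P\wedge\mathcal Q}$ in the parent $W$ with $W\models\varphi(\dot a^{G_{J_{d+1}}})$.

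Finally I would collapse this witness using finite support to reach the contradiction. \Cref{cor:parent-finite-eval-support} would force $\dot a$ to have a finite evaluation support $F\subseteq J_{d+1}$. Choosing $V$-generics $G,G'$ of $\Add(\omega,J_{d+1})$ with $G\!\restriction\!F=G'\!\restriction\!F$ but disagreeing at some coordinate $m\in J_{d+1}\setminus F$, finite support yields $\dot a^G=\dot a^{G'}$ as functions, hence equal ranges $\{c_n^G:n\}=\{c_n^{G'}:n\}$ — which Cohen genericity forbids, since $c_m^G$ differs from $c_m^{G'}$ by choice and (again generically) from every $c_n^{G'}$ with $n\neq m$. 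I expect the main obstacle to be the second step: the selector signals $S_{\mathcal P},S_{\mathcal Q}$, which carry the no-choice content of the siblings, are each invariant under only one sibling group and cannot be plugged into placement directly, so the argument has to be routed through a parameter that both groups fix setwise (the full Cohen-real set on $J_{d+1}$), and the contradiction then has to be converted into incompatibility between finite evaluation support and enumerating a range that intrinsically depends on all coordinates.
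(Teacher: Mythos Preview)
Your proposal is correct and takes a genuinely different route from the paper's argument, and in one respect it is more careful.

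\medskip
\noindent\textbf{Comparison.} The paper applies the parent-witness placement (\Cref{cor:parent-witness}) directly to the selector/signal formula, asserting via \Cref{lem:signal-invariance} that this formula is invariant under \emph{both} sibling systems, and then invokes \Cref{cor:parent-finite-eval-support} and \Cref{lem:no-finite-selector} to derive the contradiction. You rightly observe that \Cref{lem:signal-invariance} only establishes $G_{\mathcal R}$-invariance of $S_{\mathcal R}$, not cross-invariance: the parameter $\mathcal A_{\mathcal P}$ is genuinely moved by $G_{\mathcal Q}$ (a $\mathcal Q$-transposition sends the pair $\{c_{2k},c_{2k+1}\}$ to $\{c_{2k},c_{2k+2}\}$), so the hypothesis of \Cref{lem:bisymm-placement} is not literally satisfied by the selector formula. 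Your workaround replaces the selector by the formula ``$x$ is a bijection from $\omega$ onto $\{c_n:n\in J_{d+1}\}$,'' whose set parameter has a name fixed by \emph{all} coordinate permutations and is therefore honestly bi-invariant. The preliminary step---recovering the enumeration $n\mapsto c_n$ inside $M$ from the two indexed families of pairs via successive intersections---is a nice device that the paper does not need because it works with the selector directly. What your approach buys is a placement hypothesis that is transparently verified; what the paper's approach buys (when its bi-invariance claim is read charitably) is that the final contradiction is immediate from \Cref{lem:no-finite-selector} without the Cohen-genericity analysis you carry out.

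\medskip
\noindent\textbf{One clarification for Step~5.} Your two-generic phrasing implicitly uses that $\varphi(\dot a)$ is also forced under the second generic $G'$, which you have not argued. The cleanest way to finish is to use a single generic: finite evaluation support $F$ for $\dot a$ implies $\dot a^{G}\in V[G\!\restriction\!F]$, hence its range $\{c_n^G:n\in J_{d+1}\}$ lies in $V[G\!\restriction\!F]$; but for any $m\in J_{d+1}\setminus F$ the real $c_m^G$ is Cohen-generic over $V[G\!\restriction\!F]$ and therefore not in it. This avoids the need to know anything about $\varphi$ under a second generic.
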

	
	\begin{proof}
		Assume towards a contradiction that such an $M$ exists.
		Let $W:=V[G_{J_{d+1}}]$ be the parent stage of the branching that produces the siblings
		$N_{\mathcal P}$ and $N_{\mathcal Q}$ (see \Cref{def:coord-alloc}; equivalently \Cref{def:template}\,(T--ii)).
		Let $\varphi(x)$ be the coding formula for the selector/signal, so $S_{\mathcal R}$ is $\exists x\,\varphi(x)$.
		By \Cref{lem:signal-invariance}, $\varphi$ is invariant under both symmetry systems
		$(G_{\mathcal P},\mathcal F_{\mathcal P})$ and $(G_{\mathcal Q},\mathcal F_{\mathcal Q})$.
		
		By \Cref{cor:parent-witness} there exists a single parent-stage name
		$\dot a\in \HS^{\mathcal P\wedge\mathcal Q}$ such that, for each $\mathcal R\in\{\mathcal P,\mathcal Q\}$,
		$N_{\mathcal R}$ forces $\varphi(\dot a^{G_{J_{d+1}}})$ over $W$. By \Cref{lem:uniform} we may replace $\dot a$ by its symmetrization $\dot a^*$ with finite support $E_0\subseteq\omega$ disjoint from $\{c,c'\}$, and $\Vdash_{S_0}\dot a^*=\dot a$. In particular, \Cref{lem:uniform}(3) gives a common value
		\[
		a:=(\dot a^*)^{G_{\mathcal P}}=(\dot a^*)^{G_{\mathcal Q}}\in M,
		\]
		and $N_{\mathcal R}\models\varphi(a)$ for $\mathcal R\in\{\mathcal P,\mathcal Q\}$.
		By \Cref{cor:parent-finite-eval-support}, $\dot a^*$ has finite evaluation support $E_0$ with respect to $\Add(\omega,\omega)$ at the parent stage.
		
		By \Cref{lem:no-finite-selector} with $\mathcal R=\mathcal P$,
		$V^{S_{\mathcal P}}\models$ ``no $\operatorname{Add}(\omega,\omega)$-name with finite evaluation support codes a selector for $\mathcal A_{\mathcal P}$ on cofinitely many blocks.'' The property ``$\dot f$ has finite evaluation support and codes a selector for $\mathcal A_{\mathcal P}$ on cofinitely many blocks'' is $\Delta_0$ in parameters $\dot f,\mathcal A_{\mathcal P}$, hence absolute between transitive models of $\mathsf{ZF}$. Thus $M\models$ the same negation.
		
		Since $M$ is a finite symmetry-preserving iteration extending both $S_{\mathcal P}$ and $S_{\mathcal Q}$, its filter $\mathcal F_M$ contains $\operatorname{Fix}(E)$ for some finite $E\subseteq\omega$. Because $M$ contains the generics for $c$ and $c'$, we may take $E^+=E\cup E_0\cup\{c,c'\}$, and $\operatorname{Fix}(E^+)\in\mathcal F_M$ by cofiniteness. By \Cref{lem:uniform}(1)--(2), $\dot a^*$ is fixed by $\operatorname{Fix}(E^+)$, so $\dot a^*$ has finite evaluation support in $M$ in the sense of \Cref{cor:parent-finite-eval-support}. By construction of $\dot a$ (\Cref{cor:parent-witness}), $a = (\dot a^*)^{G_{\mathcal P}}$ selects a candidate in each $\mathcal P$-block for all sufficiently large $k$, i.e.\ codes a selector for $\mathcal A_{\mathcal P}$ on cofinitely many blocks, contradicting the conclusion above.
	\end{proof}
	
	\begin{example}[Two siblings inside the same parent are not amalgamable]\label{ex:two-siblings-same-parent}
		Fix a depth $d{+}1$ and form the parent model
		\[
		W \ :=\ V\bigl[G_{J_{d+1}}\bigr],
		\]
		where $J_{d+1}$ is the coordinate set allocated to that depth (see Definition~\ref{def:coord-alloc} for the depth-wise coordinate allocation).
		Inside $W$, define the two \emph{siblings} on the \emph{same} coordinates $J_{d+1}$ but with \emph{opposite} partitions:
		\[
		N_{\mathcal P}^{(1)}\ :=\ \HS^{\,W}_{(G_{\mathcal P},\,\mathcal F_{\mathcal P})}
		\qquad\text{and}\qquad
		N_{\mathcal Q}^{(1)}\ :=\ \HS^{\,W}_{(G_{\mathcal Q},\,\mathcal F_{\mathcal Q})}.
		\]
		Both $N_{\mathcal P}^{(1)}$ and $N_{\mathcal Q}^{(1)}$ embed into the same parent $W$, but by Lemma~\ref{lem:non-amalgamation} there is \emph{no} finite symmetry-preserving iteration above the parent that contains both models at once. (In particular, while $W$ contains both siblings as submodels, $W$ itself is not counted as a ``common symmetric extension'' in the $\Box_{\mathrm{sym}}$ semantics, which quantifies only over models obtained by finite symmetry-preserving iterations over $V$.)
		This is exactly the sibling-incompatibility phenomenon used later in the §6 template construction (see Lemma~\ref{lem:sibling-incompatibility}).
	\end{example}
	
	\section{Main theorem: the ZF-provably valid principles of \texorpdfstring{$\Boxsym$}{Box\_sym} are exactly $\mathsf{S4}$}\label{sec:main-theorem}
	
	\paragraph{Standing assumption.}
	Unless stated otherwise, all meta\-theoretic arguments are carried out in $\mathsf{ZFC}$. The object theory whose validities we analyze is $\mathsf{ZF}$.
	
	\paragraph{Roadmap.}
	We first establish \emph{soundness} (Propositions~\ref{prop:T} and~\ref{prop:4}). For \emph{completeness}, we fix a finite Kripke frame refuting a non-theorem $\alpha$ of $\mathsf{S4}$, unravel it into a rooted tree, and build a finite \emph{template} of one-step symmetric systems (Definitions~\ref{def:template}--\ref{def:concrete}) that simulates the frame. Three ingredients drive the argument: a branch extendability fact restated here as Lemma~\ref{lem:branch-extendability-6}, the sibling incompatibility Lemma~\ref{lem:sibling-incompatibility} ensuring non-directedness, and p\-morphism preservation (Lemma~\ref{lem:pmorphism-pres}). Putting these together yields the external completeness Theorem~\ref{thm:external-completeness} and, hence, Corollary~\ref{cor:main}.
	
	\subsection{Soundness in $\mathsf{ZF}$}
	\label{subsec:soundness}
	
	\begin{proposition}[T]
		\label[proposition]{prop:T}
		$\mathsf{ZF}\vdash \Boxsym\varphi \to \varphi$.
	\end{proposition}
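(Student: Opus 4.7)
The plan is to reduce axiom T to reflexivity of the accessibility relation underlying $\Boxsym$. By \Cref{rem:ZFprovable}, what we need is: in every \FSI{} $N$ over $V$, whenever $N\models\Boxsym\varphi$ we have $N\models\varphi$. Since $N\models\Boxsym\varphi$ means, by definition, that $\varphi$ holds in every further \FSI{} accessible from $N$, it suffices to exhibit $N$ itself as such a witness.

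First I would observe that every \FSI{} is trivially an \FSI{} of itself. Concretely, append to the iteration defining $N$ the one-step trivial symmetric system $(\{1\},\{1\},\{\{1\}\})$ with trivial forcing, trivial automorphism group, and the trivial normal filter; equivalently, take the length-zero extension. The lift/filter-monotonicity requirements of \Cref{def:SPI} and the productive-iteration conditions PI-1 through PI-4 (as well as the fixed-filter discipline) are vacuously satisfied for this step: there are no nontrivial automorphisms to decide downstairs, tenacity/excellent-support data are trivial, the $\HS$-class coincides with all names over $N$, and no additional blocks are fixed. Hence the trivial extension is a legitimate productive symmetric step and $N$ is an \FSI{} accessible from $N$.

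Axiom T then follows at once: from $N\models\Boxsym\varphi$ we obtain $M\models\varphi$ for every \FSI{} $M$ accessible from $N$, and in particular for $M=N$, giving $N\models\varphi$. The entire argument is internal to $\mathsf{ZF}$, invoking only the semantics of $\Boxsym$ and the trivial reflexive witness; no genericity, symmetry, or choice-dependent ingredient is needed. The only point worth checking carefully is that the trivial one-step system meets the bookkeeping of \Cref{def:SPI}, and this is handled vacuously, so there is no real obstacle.
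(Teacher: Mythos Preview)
Your proposal is correct and matches the paper's own proof: both argue that the null (length-zero / trivial) symmetric step is a legitimate \FSI, so the accessibility relation is reflexive and axiom~T follows immediately. You simply spell out in more detail why the trivial step vacuously satisfies the bookkeeping of \Cref{def:SPI} and (PI--1)--(PI--4), which the paper leaves implicit.
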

	
	\begin{proof}
		By \Cref{def:SPI}, the null iteration ($n=0$) is a permitted \FSI{} 
		with $\operatorname{HS}_\varnothing=V$. Hence if $\varphi$ holds in every \FSI, 
		it holds after the null iteration, i.e.\ already in $V$.
	\end{proof}
	
	\begin{proposition}[4]
		\label[proposition]{prop:4}
		$\mathsf{ZF}\vdash \Boxsym\varphi \to \Boxsym\Boxsym\varphi$.
	\end{proposition}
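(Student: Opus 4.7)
The plan is to derive axiom~4 directly from transitivity of the accessibility relation on \FSIs{}: any two \FSIs{} stacked end-to-end form a single \FSI{}. Concretely, I would argue semantically. Suppose that in the ground model we have $\Boxsym\varphi$. Fix an arbitrary one-step symmetric successor stage $N$ of $V$ reached by a \FSI{} $\langle (\mathbb P_i,\mathcal G_i,\mathcal F_i):i<n\rangle$, and fix an arbitrary \FSI{} $\langle (\mathbb P'_j,\mathcal G'_j,\mathcal F'_j):j<m\rangle$ over $N$ producing some $M$. It suffices to show $M\models\varphi$, because then $N\models\Boxsym\varphi$ and, $N$ being arbitrary, $V\models\Boxsym\Boxsym\varphi$.

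The crux is that the concatenated sequence of length $n+m$ is itself a \FSI{} in the sense of Definition~\ref{def:SPI}. For this I would verify the three clauses of Definition~\ref{def:SPI} at the splicing stage $i=n-1\to n$: lift compatibility is automatic because the $(n{+}1)$-st step, presented over $N$, is by hypothesis a symmetric system whose automorphism group extends the lift of its predecessor (the $N$-side iteration started that way); filter monotonicity is preserved by composing the two pushforwards; and the productive hypotheses (PI-1)--(PI-4) at the splice reduce to those already assumed on the two segments, invoking \cite[Thm.~7.8,~Thm.~7.9]{Karagila2019} to see that the finite composition of productive symmetric steps remains productive. The fixed-filter discipline carries over because neither segment relaxes block-partition data at its own steps, hence the concatenation does not either. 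Therefore the combined sequence is a \FSI{} $V\to M$, so by the hypothesis $\Boxsym\varphi$ evaluated at $V$ we obtain $M\models\varphi$.

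The main obstacle, insofar as there is one, is the bookkeeping check that the two segments mesh cleanly at the splicing stage, in particular that the lift of the $n$-th-stage automorphism group into the first iterand of the second segment is available inside the combined action. This is exactly the content of Karagila's composition/collapse theorems cited in Section~\ref{sec:symmetric-iterations}: any finite symmetric iteration over $V$ is equivalent to a single symmetric step over $V$, which in particular allows the two \FSIs{} to be viewed as one. Once this compositionality is in hand, the rest is formal: the proof is just the reflection in the metatheory that the accessibility relation ``reachable by a \FSI{}'' is transitive, and axiom~4 follows by the standard Kripke argument, uniformly in $\ZF$.
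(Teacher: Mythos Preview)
Your proposal is correct and follows essentially the same approach as the paper: both argue that the accessibility relation is transitive because a finite symmetry-preserving iteration over a finite symmetry-preserving iteration is again one, citing Karagila's collapse theorems \cite[Thms.~7.8--7.9]{Karagila2019}. The paper's proof is a one-liner; you spell out the semantic unwinding and sketch the splice verification at stage $n{-}1\to n$, but the content is the same. (Minor wording slip: you call $N$ a ``one-step symmetric successor stage'' but then present it as the result of an $n$-step \FSI{}; drop ``one-step''.)
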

	
	\begin{proof}
		A finite symmetric step over a finite symmetric step is again a finite symmetric step (equivalent via Karagila’s collapse; see \cite[Thms.~7.8–7.9]{Karagila2019}), hence $\Boxsym\varphi\to\Boxsym\Boxsym\varphi$ is $\ZF$-provable.
	\end{proof}
	
	\subsection{Template and coding (setup for completeness)}
	\label{subsec:template}
	
	Throughout this subsection we fix a finite rooted tree $F=(W,R,w_0)$ obtained by unraveling a finite frame that refutes a given modal formula (this arises in the proof of Theorem~\ref{thm:external-completeness}). Nodes $w\in W$ will be simulated by finite symmetric iterations determined at (and below) $w$.
	
	\begin{definition}[Template construction]
		\label[definition]{def:template}
		A \emph{template} $\mathcal{E}$ for $F$ consists of the following data:
		\begin{enumerate}
			\item For each edge $w\to v$ in $F$, a fixed \emph{one-step symmetric system} $\mathbf{S}_{w\to v}=(\mathbb{P}_{w\to v},\mathscr{G}_{w\to v},\mathscr{F}_{w\to v})$, where $\mathbb{P}_{w\to v}$ is a homogeneous forcing, $\mathscr{G}_{w\to v}\leq\mathrm{Aut}(\mathbb{P}_{w\to v})$ is a group of finitary automorphisms, and $\mathscr{F}_{w\to v}$ is a normal filter of subgroups, such that the step preserves the relevant blocks/parameters fixed at $w$.
			\item For each node $w$, the \emph{local code} specifying which propositional letters are intended true at $w$ (coming from a valuation on $F$) together with a choice of names whose truth is supported by subgroups in $\mathscr{F}_{w\to v}$ along edges out of $w$ (``evaluation support'').
			\item For each sibling pair $v_1,v_2$ with the same parent $w$, finite \emph{signals} (two\-element families of names) placed at the $w\to v_i$ level that are moved by within-block finitary automorphisms, so that no selector for the pair persists along both branches (see Lemma~\ref{lem:sibling-incompatibility}).
		\end{enumerate}
	\end{definition}
	
	\begin{construction}[Canonical atomic names and supports at a node]\label{constr:atomic-names}
		Fix a node $w$ of depth $d$ in the unraveled frame $F$, and an outgoing edge $w\to v$.
		Let $(P_{w\to v},G_{w\to v},F_{w\to v})$ be the one-step symmetric system labelling the edge as in Definition~\ref{def:template} (item~(2)).
		
		For each propositional letter $p$ with $p\in \nu(w)$ (the valuation on $F$), choose a finite set 
		$F_{w,p}\subseteq J_{d+1}$ of Cohen coordinates (disjoint across distinct letters if desired), and define a $P_{w\to v}$-name
		\[
		\dot\tau_{w,p}\ :=\ \bigwedge_{n\in F_{w,p}}\bigl( \dot c_n(0)=0 \bigr),
		\]
		where $\dot c_n$ is the canonical name for the $n$th Cohen real. 
		\begin{enumerate}
			\item \emph{Support and stabilizer.} The automorphism stabilizer $\Fix(F_{w,p})$ (pointwise stabilizer of $F_{w,p}$ inside $G_{w\to v}$) fixes $\dot\tau_{w,p}$. 
			Since $F_{w\to v}$ is the \emph{upward-closed} normal filter generated by pointwise stabilizers of cofinitely many blocks and cofinitely many coordinates within each fixed block (see Appendix \ref{app:ET-verification}), we have $\Fix(F_{w,p})\in F_{w\to v}$. Hence $\dot\tau_{w,p}\in \HS_{F_{w\to v}}$.
			\item \emph{Persistence.} Along any extension $w\to v\to u\to\cdots$ on the branch, the filters grow monotonically (PI--4), so $\Fix(F_{w,p})\in F_{v\to u}\subseteq\cdots$. Thus $\dot\tau_{w,p}$ remains hereditarily symmetric and its evaluation is unchanged in all descendants.
			\item \emph{Toggling truth at $w$.} In the concrete realization (Construction~\ref{constr:kripke}), decide the finitely many coordinates in $F_{w,p}$ at the first step below $w$ so that $\Vdash \dot\tau_{w,p}$. Then $p$ holds at every world realizing a path with terminal node $w$, and by (2) this truth persists to descendants.
		\end{enumerate}
	\end{construction}
	
	\begin{definition}[Concrete action at depth $d{+}1$]
		\label[definition]{def:concrete}
		Fix a partition $\langle J_d : d\in\omega\rangle$ of $\omega$ into infinite,
		pairwise disjoint sets and fix increasing bijections $e_d : \omega\to J_d$.
		Fix $w\in W$ of depth $d$ and a successor $v$. The \emph{concrete action} of $\mathbf{S}_{w\to v}$ at depth $d{+}1$ is the symmetric extension by $\mathbf{S}_{w\to v}$ over the model attached to $w$, with blocks (a partition of the relevant coordinates) chosen so that:
		\begin{enumerate}
			\item the local code of $v$ is realized by names with evaluation support in $\mathscr{F}_{w\to v}$;
			\item signals inserted at level $w\to v$ persist to all descendants of $v$ while remaining vulnerable to finitary permutations within each block at that level.
		\end{enumerate}
		
		\smallskip\noindent\textbf{Block partitions.}
		For $\mathcal P$ set $\mathcal B_{\mathcal P}=\bigl\{\{e_{d+1}(2k),\,e_{d+1}(2k{+}1)\}:k\in\omega\bigr\}$ and
		for $\mathcal Q$ set $\mathcal B_{\mathcal Q}=\bigl\{\{e_{d+1}(2k{+}1),\,e_{d+1}(2k{+}2)\}:k\in\omega\bigr\}$.
		(Any fixed, overlapping pairing scheme would suffice; this concrete choice fixes notation.)
		
		\smallskip\noindent\textbf{Groups and transport.}
		Let $G^{\omega}_{\mathcal R}\leq \Sym(\omega)$ be the finitary subgroup generated by adjacent transpositions that preserve each block in 
		$\{\,\{2k+\delta,\,2k+1+\delta\}\,:\,k\in\omega\,\}$ setwise, where $\delta=0$ for $\mathcal P$ and $\delta=1$ for $\mathcal Q$.
		Transport to $J_{d+1}$ by conjugation:
		\[
		G^{J_{d+1}}_{\mathcal R}:=e_{d+1}\, G^{\omega}_{\mathcal R}\, e_{d+1}^{-1}\ \leq\ \Sym(J_{d+1}),
		\]
		so the natural restriction map is $\rho_{J_{d+1}}:\Sym(\omega)\to\Sym(J_{d+1})$, $\sigma\mapsto e_{d+1}\sigma e_{d+1}^{-1}$.
		
		\smallskip\noindent\textbf{Action on conditions.}
		For $\pi\in G^{J_{d+1}}_{\mathcal R}$ define
		\[
		(\pi\cdot p)(j,m):=p(\pi^{-1}(j),m)\qquad (j\in J_{d+1},\ m\in\omega),
		\]
		and extend by the identity outside $J_{d+1}$. Equivalently, writing $\sigma=e_{d+1}^{-1}\!\circ\pi\circ e_{d+1}\in G^{\omega}_{\mathcal R}$,
		\[
		(\pi\cdot \hat p)(n,m)=\hat p\bigl(\sigma^{-1}(n),m\bigr),
		\]
		where $\hat p(n,m)=p(e_{d+1}(n),m)$.
	\end{definition}
	
	\begin{example}[A two-branch toy]\label[example]{ex:two-branch}
		Let $F$ have branches $\{w_0\prec w_1 \prec w_2\}$ and $\{w_0\prec w_1 \prec w_2'\}$, and let $F^*$ be its rooted unraveling.
		Build the diagram $\mathcal E$ via \Cref{def:template}.
		
		\emph{Depth $0\to 1$ (edge).} Force on $J_{1}$ to form $W_1:=V[G_{J_1}]$ (\Cref{def:template}).
		At depth $1$, take two children inside $W_1$ using opposite partitions on $J_1$ (\Cref{def:template}):
		put label $\mathcal P$ on the branch child and $\mathcal Q$ on the sibling (any fixed alternation works).
		
		\emph{Depth $1\to 2$ (edge).} Force on $J_{2}$ to form $W_2:=W_1[G_{J_2}]$ (\Cref{def:template}).
		At depth $2$, create the branch child over $w_2$ with the same partition label as at depth $1$
		(preserving the branch’s label) and the sibling over $w_2'$ with the opposite label, again using only $J_2$.
		
		Thus, siblings at a fixed depth share the same coordinates $J_{d+1}$ but use opposite partitions,
		while different depths use disjoint $J_d$.
	\end{example}
	
	\begin{example}[Depth $0\to1\to2$ coordinate diary]\label[example]{ex:012}
		Let $\langle J_0,J_1,J_2,\dots\rangle$ partition $\omega$.
		\begin{enumerate}
			\item \textbf{Depth 0 (root).} Ambient model $V$.
			\item \textbf{Edge $0\to1$.} Force on $J_1$ to get $W_1:=V[G_{J_1}]$.
			Define the two children inside $W_1$:
			$N_{\mathcal P}^{(1)}:=\HS_{(G_{\mathcal P},\mathcal F_{\mathcal P})}^{W_1}$ and $N_{\mathcal Q}^{(1)}:=\HS_{(G_{\mathcal Q},\mathcal F_{\mathcal Q})}^{W_1}$,
			both using the action restricted to $J_1$ (\Cref{def:template}).
			\item \textbf{Edge $1\to2$.} Force on the new coordinates $J_2$ to get
			the next ambient extension
			$W_2:=V[G_{J_1\cup J_2}]=W_1[G_{J_2}]$.
			At depth $2$, again take siblings inside $W_2$ using the label on the edge:
			reuse the branch label on the successor, alternate on the sibling.
		\end{enumerate}
		Thus no step reuses coordinates: $J_{d+1}$ is new at depth $d{+}1$, siblings
		share $J_{d+1}$ but differ only by the filter (hence HS-class), and
		non-amalgamation is witnessed as in Lemma~4.8.
	\end{example}
	
	\noindent The normal filter $\mathcal F_{\mathcal R}$ on $G^{J_{d+1}}_{\mathcal R}$ is generated by pointwise stabilizers of cofinitely many blocks in $\mathcal B_{\mathcal R}$ and cofinitely many coordinates within each fixed block (cf.\ Lemma~\ref{lem:excell-tenacious}).
	
	\subsubsection*{Verification of template prerequisites}
	
	\begin{lemma}[Excellence \& Tenacity]\label[lemma]{lem:excell-tenacious}
		For $\mathcal R\in\{\mathcal P,\mathcal Q\}$, the symmetry system $(G_{\mathcal R},\mathcal F_{\mathcal R})$ has excellent supports and is tenacious in the sense of \cite[Def.~4.1]{Karagila2019}. In particular, the excellent names are predense and the hereditarily symmetric names form a transitive inner model of $\ZF$ sufficient for the next-step name formation we use later.
	\end{lemma}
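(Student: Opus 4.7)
My plan is to verify, in order, the two clauses of \cite[Def.~4.1]{Karagila2019}---tenacity first, excellent supports second---exploiting the crucial structural fact that $\Add(\omega,\omega)$-conditions are finite partial functions, so each condition meets only finitely many $\mathcal R$-blocks. The $\ZF$-model conclusion then follows by direct appeal to \cite[Thm.~5.6]{Karagila2019}.

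For \emph{tenacity}, I would fix $\pi\in G_{\mathcal R}$ and a condition $p$, and produce (extending $p$ by a single coordinate decision if necessary) some $q\le p$ whose first-coordinate set is disjoint from the finite set of blocks moved by $\pi$; then $\pi\cdot q=q$, so the set $D_\pi:=\{q:\pi\cdot q=q\}$ is dense below every $p$. For each such $q$, the pointwise stabilizer of the finite block-set $E(q)$ meeting $\mathrm{dom}(q)$ already fixes $q$ and lies in $\mathcal F_{\mathcal R}$ by the generating scheme of the filter, so the uniform witnessing subgroup required by the tenacity clause is at hand.

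For \emph{excellent supports}, I would proceed by induction on name rank inside $\HS_{\mathcal F_{\mathcal R}}$. The Cohen-coordinate names $\dot c_n$ admit the singleton block containing $n$ as an excellent support. For a hereditarily symmetric $\dot x=\{\langle \dot y_i,p_i\rangle:i\in I\}$ whose symmetry group contains $\Fix(E)\in\mathcal F_{\mathcal R}$, I take as excellent support the finite union of $E$ with the inductively given supports of the $\dot y_i$ and the block-sets $E(p_i)$; its pointwise stabilizer still lies in $\mathcal F_{\mathcal R}$ and witnesses the symmetry of $\dot x$. Predensity of excellent conditions is immediate in our concrete setting because every condition $q$ is itself excellent, witnessed by $\Fix(E(q))$.

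The step I expect to be the main obstacle is not the pointwise verifications above but packaging them in Karagila's coherent form: (PI--2) requires the supports and the predense system of conditions to interact \emph{compatibly} with the action, not merely to exist separately. To handle this I would invoke Lemma~\ref{lem:alt-adjacent-generate} to reduce the action of $G_{\mathcal R}$ to adjacent transpositions whose support can be placed outside any prescribed finite block-set, making the coherence of supports across orbit mixing (as in Lemma~\ref{lem:H-mix} and Definition~\ref{def:orbit-mixing}) automatic. With excellence and tenacity so packaged, \cite[Thm.~5.6]{Karagila2019} yields that $\HS_{\mathcal F_{\mathcal R}}$ interprets to a transitive inner model of $\ZF$ rich enough to support the next-stage symmetric systems used in the template of Section~\ref{sec:main-theorem}.
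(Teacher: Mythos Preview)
Your tenacity paragraph contains a false step. Given $\pi\in G_{\mathcal R}$ and an arbitrary condition $p$, you cannot produce $q\le p$ whose first-coordinate projection avoids the blocks moved by $\pi$: in $\Add(\omega,\omega)$ the order is reverse inclusion, so $q\le p$ means $q\supseteq p$ and hence $\mathrm{dom}(q)\supseteq\mathrm{dom}(p)$. Concretely, if $p(2k,0)=0$, $p(2k{+}1,0)=1$, and $\pi$ is the swap $(2k\ 2k{+}1)$, then every $q\le p$ inherits these values and $\pi\cdot q\ne q$; thus $D_\pi$ is \emph{not} dense below $p$. Your second observation---that every condition $q$ has $\Fix(E(q))\le\mathrm{sym}(q)$ with $\Fix(E(q))\in\mathcal F_{\mathcal R}$---is correct and is the relevant point for the ``tenacious conditions are predense'' clause; the $D_\pi$ detour is both wrong and unnecessary. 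Note, however, that the paper's Appendix formulation (Prop.~\ref{prop:tenacity-direct}) asks for more: given a \emph{prescribed} $H\in\mathcal B$ and any $p$, produce an $H$-fixed $q\le p$. That is obtained there by the union-of-orbit trick---pass to $s\le p$ whose $K$-translates have pairwise disjoint domains (where $K$ is the finite subgroup of $H$ acting nontrivially on $\mathrm{supp}(p)$) and set $q:=\bigcup_{\pi\in K}\pi\cdot s$---which your sketch does not supply.

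Your excellence argument has a sharper gap. For $\dot x=\{\langle\dot y_i,p_i\rangle:i\in I\}$ you propose as excellent support the union of $E$ with the inductively obtained supports of the $\dot y_i$ and the block-sets $E(p_i)$, calling it ``the finite union.'' But $I$ is typically infinite, so this union need not be a finite set of blocks, and its pointwise stabilizer need not lie in $\mathcal F_{\mathcal R}$; the induction does not close. The paper (Def.~\ref{def:ET-support}, Claim~\ref{clm:orbit-finite}, Prop.~\ref{prop:excellence-direct}) avoids this entirely by \emph{symmetrization}: rather than assembling a support from the supports of subnames, it replaces $\dot x$ by an orbit-closed equivalent $\dot x^{\langle H\rangle}$ for a single $H$ read off from the (countable) coordinate data of $\dot x$, and verifies directly that $H$ supports the symmetrized name. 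Your closing remark that every condition is itself excellent, witnessed by $\Fix(E(q))$, is correct.
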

	
	\begin{proof}
		A direct verification is given in Appendix~\ref{app:ET-verification}
		(Propositions~\ref{prop:excellence-direct} and~\ref{prop:tenacity-direct}
		and Corollary~\ref{cor:ET-direct}). Since all iterations considered here
		have finite length, there are no limit stages; normality of the product
		filter follows immediately, and the hypotheses of
		\cite[Thms.~4.9--4.12]{Karagila2019} are satisfied trivially at each
		successor step.
	\end{proof}
	
	\begin{lemma}[Productive hypotheses]\label[lemma]{lem:PIcheck}
		Every step in Definition~\ref{def:template} satisfies \textup{(PI-1)}--\textup{(PI-4)} of \cite[Def.~8.1]{Karagila2019}.
	\end{lemma}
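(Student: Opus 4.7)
The plan is to dispatch the four conditions \textbf{(PI-1)}--\textbf{(PI-4)} one at a time, exploiting the \emph{uniform} structure that every edge in Definition~\ref{def:template} instantiates the same template transported to fresh coordinates $J_{d+1}$ via the bijection $e_{d+1}$ (Definition~\ref{def:concrete}). Because every piece of symmetry data --- the forcing $\Add(\omega,J_{d+1})$, the within-block group $G^{J_{d+1}}_{\Rblk}$, and the normal filter generated by pointwise stabilizers of cofinitely many $\mathcal B_{\Rblk}$-blocks --- is specified by ground parameters (the fixed partition $\langle J_d:d\in\omega\rangle$ and the template groups $G^{\omega}_{\Rblk}$), the verification is largely a matter of reading off what the template has already arranged.

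For \textbf{(PI-1)}, I would observe that at each edge $w\to v$ the data $(\mathbb{P}_{w\to v},\mathscr G_{w\to v},\mathscr F_{w\to v})$ is ground-definable and independent of any generic choice made at earlier stages; hence each automorphism and each top-step name has a canonical check-name decided by the trivial condition in any previous stage. For \textbf{(PI-2)}, I would invoke Lemma~\ref{lem:excell-tenacious} edge-by-edge: since each edge uses the common template merely conjugated by $e_{d+1}$, the excellent-supports and tenacity verification (Appendix~\ref{app:ET-verification}, or \cite[Thms.~4.9--4.12]{Karagila2019}) applies uniformly without change.

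For \textbf{(PI-3)}, I would cite \cite[Thm.~5.6]{Karagila2019}: the intermediate symmetric model at each edge is a transitive model of $\ZF$, so its $\HS$-class is closed under pairing, unions, images by ground-definable maps, and rudimentary set operations --- which are the only name-forming operations used when assembling block reals $r^{\Rblk}_{k,i}$, the signal families $\mathcal A_{\Rblk}$, and the canonical atomic names $\dot\tau_{w,p}$ of Construction~\ref{constr:atomic-names}. For \textbf{(PI-4)}, I would use the coordinate-allocation convention of Definition~\ref{def:coord-alloc}: because distinct depths use \emph{disjoint} coordinate sets $J_d$, the lift of any ancestor subgroup $K\in\mathcal F_{w'\to w}$ acts trivially on the fresh coordinates $J_{d+1}$, so $\Fix^{\uparrow}(K)$ contains $G^{J_{d+1}}_{\Rblk}$ as a subgroup and hence lies in $\mathscr F_{w\to v}$. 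This gives the required filter monotonicity along every branch and ensures that only the finitely many newly-fixed blocks at step $d{+}1$ enlarge the filter basis, which is precisely the fixed-filter discipline of Definition~\ref{def:SPI}.

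The main obstacle, such as it is, is purely bookkeeping: checking that the lift of an ancestor's group (Definition~\ref{def:SPI}(1)) genuinely embeds into the current step's symmetry via the conjugation identity $G^{J_{d+1}}_{\Rblk}=e_{d+1}\,G^{\omega}_{\Rblk}\,e_{d+1}^{-1}$, and that the pushforward of the ancestor's filter sits inside the child's filter. Because the $J_d$'s are pairwise disjoint and each $G^{J_d}_{\Rblk}$ is supported on its own coordinate block, this embedding and the filter inclusion are automatic once the disjointness is made explicit; I would record this as a one-paragraph ``coordinate-locality'' observation at the start of the proof, after which the four items collapse to citation plus template readings.
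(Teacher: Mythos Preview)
Your proposal is correct and follows essentially the same approach as the paper: both verify (PI-1)--(PI-4) item by item, invoking Lemma~\ref{lem:excell-tenacious}/Appendix~\ref{app:ET-verification} for (PI-2), citing \cite[Thm.~5.6]{Karagila2019} for the $\ZF$-closure giving (PI-3), and reading (PI-1) and (PI-4) directly off the template's ground-definable data and depth-disjoint coordinate allocation. Your explicit ``coordinate-locality'' observation for the lift embedding is a mild elaboration of what the paper leaves implicit, but the argument structure is the same.
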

	
	\begin{proof}
		\textbf{(PI--1) Decidability downstairs.}
		By Definition~\ref{def:template}, at each edge $w\to v$ the coordinate choice $e_{d+1}$ and the sibling alternation are fixed in the parent stage $W:=V[G_{J_{d+1}}]$, so the relevant automorphisms and all top-step names are decided in the previous stage. (Cf. §3.3 “Productive iterations and block preservation”.)
		
		\smallskip
		\textbf{(PI--2) Excellence and Tenacity.}
		Excellence holds by Proposition~A.5: for every name $\dot x$ there is $H$ in the base such that the symmetrization $\dot x\langle H\rangle$ is $H$-supported; closure of hereditarily $H$-supported names follows by rank recursion.
		Tenacity holds by Proposition~A.6: if $p\Vdash \psi(\dot x)$ with $\dot x$ $H$-supported, then there is $q\le p$ fixed by $H$ with $q\Vdash \psi(\dot x)$.
		Thus the step admits a predense system of conditions whose pointwise stabilizers witness the desired symmetry, as required.
		
		\smallskip
		\textbf{(PI--3) Closure of respected names.}
		At each step the intermediate interpreted symmetric model $IS$ satisfies $\mathsf{ZF}$; hence the associated HS-class is closed under pairing, unions, images by ground-definable maps, and the rudimentary operations used to form the next stage (see [5, Thm.~5.6] and [5, §5.2]).
		
		\smallskip
		\textbf{(PI--4) Monotone growth of the filter along a branch.}
		By construction of the normal filters attached to edges, passing from $w$ to $v$ fixes only finitely many additional blocks while preserving the tail stock of within-block automorphisms; along a branch the filter grows monotonically and adds only finitely many new fixed blocks at each step.
		
		\smallskip
		Combining (PI--1)–(PI--4) completes the verification for Definition~\ref{def:template}.
	\end{proof}
	
	\begin{proposition}[Template prerequisites]\label[proposition]{prop:template-prereqs}
		Let $\mathcal E$ be the finite diagram built in Definition~\ref{def:template} (indexed by the finite rooted tree $F$). Then: (i) factorization through intermediates holds by Karagila \cite[Thms.~7.8--7.9]{Karagila2019}; (ii) each step satisfies (PI-1)--(PI-4) (Lemma~\ref{lem:PIcheck}); (iii) siblings use incompatible partitions, hence no finite symmetric amalgamation exists (Lemma~\ref{lem:sibling-incompatibility}); (iv) depth/coordinate allocation is consistent across the diagram.
	\end{proposition}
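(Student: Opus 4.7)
The proposition is essentially a bookkeeping lemma: each clause has already been established by a dedicated result earlier in the paper, so the task is to assemble the four citations in a single statement and check that they are being invoked in compatible settings. The plan is therefore to proceed clause by clause, taking care that the finite diagram $\mathcal E$ fed into each cited result really satisfies its hypotheses.

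For clause~(i), I would note that $\mathcal E$ is by construction a finite sequence of symmetric systems in the sense of Definition~\ref{def:SPI}, so the collapse theorems \cite[Thm.~7.8, Thm.~7.9]{Karagila2019} apply directly and give the factorization of any intermediate model as a single symmetric step over~$V$ (and, by iteration, factorizations through each intermediate node of $F$). For clause~(ii), I would simply invoke Lemma~\ref{lem:PIcheck}, which has already verified (PI--1)--(PI--4) step by step for the template of Definition~\ref{def:template}; I would add a pointer to Lemma~\ref{lem:excell-tenacious} as the supporting ingredient for (PI--2).

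Clause~(iii) is the only one that requires more than a one-line citation: I would spell out that at each branching the sibling symmetry systems $(G_{\mathcal P},\mathcal F_{\mathcal P})$ and $(G_{\mathcal Q},\mathcal F_{\mathcal Q})$ in Definition~\ref{def:template}(T--iii) use the opposite block-partitions $\mathcal B_{\mathcal P}$ and $\mathcal B_{\mathcal Q}$ on the same coordinate set $J_{d+1}$, exactly as in \S\ref{sec:partition-construction}. Thus the hypotheses of Lemma~\ref{lem:non-amalgamation} (i.e.\ the sibling-incompatibility lemma) are met at every branching of $F$, so no \FSI{} above the parent amalgamates the two children. Clause~(iv) follows from Definition~\ref{def:coord-alloc}: the partition $\langle J_d : d\in\omega\rangle$ is fixed once and for all, the edge at depth $d{+}1$ uses only the coordinates $J_{d+1}$, and distinct depths consume disjoint coordinate sets, so no two steps in the diagram can interfere with each other.

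The main (mild) obstacle is not any single clause but the need to ensure that the ambient/parent stage used in clause~(iii) is the same $W=V[G_{J_{d+1}}]$ that clause~(iv)'s coordinate discipline produces, so that Lemma~\ref{lem:non-amalgamation} can be applied exactly as stated. Once that alignment is made explicit, the proof reduces to listing the four citations. I expect the final write-up to be only a few lines, essentially an ``assembly'' paragraph followed by the four references.
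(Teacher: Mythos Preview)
Your proposal is correct and matches the paper's approach: the proposition is a summary statement whose ``proof'' is already embedded in the citations listed in clauses (i)--(iv), and the paper accordingly gives no separate proof environment. One small alignment point: for clause~(iii) the paper cites Lemma~\ref{lem:sibling-incompatibility} rather than Lemma~\ref{lem:non-amalgamation}; these establish the same phenomenon, but the former is the version stated at the template level of~\S\ref{sec:main-theorem}.
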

	
	\subsection{Local persistence}
	\begin{lemma}[Persistence under block-preserving steps]\label[lemma]{lem:persistence}
		Fix $\mathcal{R}\in\{\mathcal{P},\mathcal{Q}\}$. Suppose a stage on a branch satisfies $S_{\mathcal R}$ and the next productive step preserves the block-partition $\mathcal R$ (up to finitely many fixed additional blocks). Then every later stage on that branch satisfies $S_{\mathcal{R}}$.
	\end{lemma}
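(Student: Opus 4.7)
The plan is to argue by contradiction and reduce to the no-finite-selector obstruction of \Cref{lem:no-finite-selector}. Suppose, for contradiction, that some later stage $N_k$ on the branch fails $S_{\mathcal R}$, i.e.\ $N_k\models \exists f\,\Sel_{\mathcal R}(f)$. Since the intermediate iteration from the stage satisfying $S_{\mathcal R}$ up to $N_k$ is finite, it is enough (by induction on the number of steps) to handle a single productive step; the statement then follows by chaining. Karagila's collapse (\cite[Thm.~7.8--7.9]{Karagila2019}) also lets us present the whole segment as a single symmetric step if desired, but the one-step argument is cleanest.

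Focusing on a single branch step, let $(\mathbb P,G,\mathcal F)$ be the one-step symmetric system of the next productive step above the $S_{\mathcal R}$-stage. Any alleged selector $f$ in the next stage has a hereditarily symmetric name $\dot f$ with respect to $(G,\mathcal F)$. By the block-preservation hypothesis together with \textbf{(PI--4)}, the filter $\mathcal F$ still contains the pointwise stabilizer of cofinitely many $\mathcal R$-blocks; in particular, tail within-block swaps inside $\mathcal R$ belong to the stabilizer of $\dot f$. Applying the relativized finite-support machinery (\Cref{lem:finite-support-rel} and \Cref{cor:parent-finite-eval-support}) to the ambient product in which the step lives, $\dot f$ has a finite evaluation support $F\subseteq\omega$.

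Now I would invoke \Cref{lem:no-finite-selector} verbatim: pick a $\mathcal R$-block index $k$ so large that its two coordinates lie outside $F$ and outside the finitely many blocks that may have been fixed by passing to the next step. The within-block transposition $\pi$ swapping the two candidates in block $k$ is then in the stabilizer of $\dot f$ and fixes $F$ pointwise. For a generic $G$ that decides $f^G(k)$ to be the first candidate, $\pi\!\cdot G$ agrees with $G$ off $F$ but decides the second; finite evaluation support forces $f^{\pi\cdot G}=f^G$, a contradiction. Since $\mathcal A_{\mathcal R}$ itself is ground-definable and hence unchanged at later stages, this contradicts the assumption that a selector exists at $N_k$, so $S_{\mathcal R}$ persists.

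The main obstacle — really the only delicate point — is verifying that the hypothesis ``the next productive step preserves the block-partition $\mathcal R$ (up to finitely many fixed additional blocks)'' genuinely leaves cofinitely many within-block $\mathcal R$-swaps available in the next-stage filter; this is exactly the content of \textbf{(PI--4)} and the fixed-filter discipline of \Cref{def:SPI}, and it is what licenses the $\pi$ chosen above. Once that is explicit, the argument is purely the no-finite-selector obstruction and propagates by induction along the branch.
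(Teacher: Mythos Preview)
Your overall strategy matches the paper's: reduce to a single productive step, use (PI--4) to retain cofinitely many within-$\mathcal R$-block swaps in the stabilizer of a putative selector name, and then run the no-selector obstruction. That part is fine, and the induction/iteration along the branch is exactly what the paper does.

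The gap is your middle step. You write: ``Applying the relativized finite-support machinery (\Cref{lem:finite-support-rel} and \Cref{cor:parent-finite-eval-support}) \ldots\ $\dot f$ has a finite evaluation support $F\subseteq\omega$.'' Those results do \emph{not} apply here. \Cref{lem:finite-support} (and its relativizations \Cref{lem:finite-support-rel}, \Cref{cor:parent-finite-eval-support}) require the name to be fixed by the within-block swaps of \emph{both} partitions $\mathcal P$ and $\mathcal Q$; it is precisely the alternation of the two families of adjacent transpositions (\Cref{lem:alt-adjacent-generate}) that generates the full tail finitary group and forces finite evaluation support. With only a single $\mathcal R$ available, membership in $\HS_{\mathcal R}$ does \emph{not} yield finite evaluation support: for instance, the canonical name for the sequence $\langle\{c_{2k},c_{2k+1}\}:k\in\omega\rangle$ is $G_{\mathcal P}$-invariant but depends on all coordinates. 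So your appeal to \Cref{lem:no-finite-selector} ``verbatim'' (which hypothesizes finite evaluation support) is not licensed.

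The repair is exactly what the paper does: do not claim finite evaluation support at all. Instead, use directly that for cofinitely many $k$ the within-block swap $\pi_k$ lies in $\mathrm{sym}(\dot f)$. Choose such a $k$, take $q$ forcing $\dot f(\check k)=\dot r^{\mathcal R}_{k,0}$, apply $\pi_k$ to get $\pi_k q\Vdash \dot f(\check k)=\dot r^{\mathcal R}_{k,1}$, and (using Cohen homogeneity to arrange compatibility) obtain a condition forcing the two candidates equal---the classical symmetric-model contradiction. This is what the paper means by ``as in \Cref{lem:no-finite-selector}'': it is invoking the \emph{swap argument}, not the finite-support \emph{hypothesis}, of that lemma. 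Once you make this substitution, your proof coincides with the paper's.
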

	\begin{proof}
		The family $\mathcal{A}_{\mathcal{R}}$ remains definable; any selector would again be moved by within-block automorphisms on cofinitely many blocks, as in \Cref{lem:no-finite-selector}. Formally, suppose $S_{\mathcal R}$ holds at stage $u$. Let $u\to u'$ be a productive
		step preserving $\mathcal R$ and fixing only finitely many additional $\mathcal R$-blocks.
		As in \Cref{lem:no-finite-selector}, any selector at $u'$ would be moved by within-block automorphisms
		on cofinitely many $\mathcal R$-blocks; this still holds at $u'$ because the later $\mathcal R$-filter
		adds only finitely many fixed blocks while preserving the tail of within-block permutations
		(productive setup \cite[Def.~8.1]{Karagila2019} and excellent supports/tenacity
		\cite[Thm.~4.9, Prop.~4.10, Lem.~4.11, Cor.~4.12]{Karagila2019}). (the tail stock of within-block permutations persists by (PI–4) and the step’s symmetry system, see \Cref{def:concrete}, and \Cref{lem:excell-tenacious}). Thus $S_{\mathcal R}$ holds at $u'$.
		Iterating this argument along the branch yields the claim.
	\end{proof}
	
	\subsection{Key lemmata for the p-morphism argument}
	\label{subsec:key-lemmas}
	
	\begin{lemma}[Atomic preservation]
		\label[lemma]{lem:atomic}
		Along any edge $w\to v$, truth of propositional letters designated in the local code at $w$ persists to $v$ and its descendants, witnessed by names with evaluation support fixed by $\mathscr{F}_{w\to v}$.
	\end{lemma}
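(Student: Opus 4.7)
The plan is to quote the witnesses built in Construction~\ref{constr:atomic-names} and then separate the two things the lemma asserts: membership in the appropriate HS-class along the edge, and stable evaluation down the branch. For each propositional letter $p\in\nu(w)$, Construction~\ref{constr:atomic-names} supplies a finite coordinate set $F_{w,p}\subseteq J_{d+1}$ and the canonical name
\[
\dot\tau_{w,p}\;=\;\bigwedge_{n\in F_{w,p}}\bigl(\dot c_n(0)=0\bigr).
\]
The pointwise stabilizer $\Fix(F_{w,p})\le \mathscr{G}_{w\to v}$ fixes $\dot\tau_{w,p}$ by construction, and by the description of $\mathscr{F}_{w\to v}$ as the upward-closed normal filter generated by cofinite-block and within-block pointwise stabilizers (cf.\ \Cref{lem:excell-tenacious} and Appendix~\ref{app:ET-verification}), $\Fix(F_{w,p})\in \mathscr{F}_{w\to v}$. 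Hence $\dot\tau_{w,p}\in \HS_{\mathscr{F}_{w\to v}}$, which is the witness with evaluation support claimed in the statement.

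Next I would handle \emph{persistence to descendants} in two steps. First, persistence of symmetry: along any extension $w\to v\to u\to\cdots$ on the branch, filter monotonicity (PI--4, checked for our template in \Cref{lem:PIcheck}) gives $\Fix(F_{w,p})\in \mathscr{F}_{v\to u}\subseteq\mathscr{F}_{u\to\cdot}\subseteq\cdots$, so $\dot\tau_{w,p}$ remains hereditarily symmetric at every later stage. Second, persistence of evaluation: by the depth-disjoint coordinate allocation of \Cref{def:coord-alloc} and \Cref{def:concrete}, any productive step strictly below the edge $w\to v$ forces on coordinates $J_{d'}$ with $d'>d+1$, disjoint from $F_{w,p}\subseteq J_{d+1}$; hence the restriction of the descendant generic to $F_{w,p}$ coincides with the restriction already fixed at the $w\to v$ step, and by finite evaluation support (\Cref{def:finite-support}, \Cref{cor:parent-finite-eval-support}) the value $\dot\tau_{w,p}^{G}$ is the same in all descendants.

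Finally, I would invoke the toggling clause of Construction~\ref{constr:atomic-names}(3): the finitely many coordinates in $F_{w,p}$ are decided at the first step below $w$ to force $\Vdash\dot\tau_{w,p}$. Combining with the previous paragraph, $\dot\tau_{w,p}$ evaluates to \emph{true} at $v$ and at every descendant of $v$ on the branch, with witness support inside $\mathscr{F}_{w\to v}$. Letters not designated at $w$ are simply not forced at this edge; the construction uses disjoint $F_{w,p}$ across distinct letters so there is no interference.

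The only delicate point, and the one I would be careful about in the write-up, is the interaction of the \emph{evaluation} notion of support with the \emph{automorphism} notion used in PI--4. This is exactly what \Cref{rem:eval-vs-sym} and the Cohen-homogeneity bridge \Cref{lem:finite-support} are for: being fixed by the tail stabilizer translates into depending only on $F_{w,p}$ in the evaluation sense, and conversely the tail of within-block automorphisms preserved at each later step (\Cref{lem:excell-tenacious}, PI--2/PI--4) keeps the automorphism side intact. Once these two views are aligned, there is nothing more to verify beyond bookkeeping over the finite tree $F$.
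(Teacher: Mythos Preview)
Your proposal is correct and follows essentially the same route as the paper: quote Construction~\ref{constr:atomic-names} for the witness $\dot\tau_{w,p}$, use $\Fix(F_{w,p})\in\mathscr{F}_{w\to v}$ for HS-membership, invoke (PI--4) for filter monotonicity along the branch, and appeal to the toggling clause for truth at $w$. Your write-up is in fact more careful than the paper's, which compresses the persistence-of-symmetry and persistence-of-evaluation steps into a single sentence; your explicit separation (using depth-disjoint coordinates for the evaluation side and the bridge of \Cref{rem:eval-vs-sym}/\Cref{lem:finite-support}) is a welcome clarification rather than a different argument.
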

	
	\begin{proof}
		Fix an edge $w\to v$ and a letter $p\in \nu(w)$. By Construction~\ref{constr:atomic-names}, choose $\dot\tau_{w,p}$ supported on a finite set $F_{w,p}\subseteq J_{d+1}$ with stabilizer $\Fix(F_{w,p})\in F_{w\to v}$; hence $\dot\tau_{w,p}\in \HS_{F_{w\to v}}$ and its evaluation is fixed under the allowed automorphisms. 
		In the concrete realization, decide the finitely many coordinates in $F_{w,p}$ so that $\Vdash \dot\tau_{w,p}$ at the first step below $w$; then $p$ holds at the world for $w$. For $p\notin\nu(w)$, no ancestor of $w$ forces $\dot\tau_{w',p}$ true; the coordinates in $J_{d+1}$ relevant to $p$ are not decided by any condition in the generic filter at $w$, so $\dot\tau_{w,p}$ evaluates to false in the symmetric extension at $w$.
		Since later steps act only on $J_{d+2}, J_{d+3}, \dots$ (\Cref{def:coord-alloc}), any automorphism $\pi$ in a later filter satisfies $\pi\!\upharpoonright\! J_{d+1} = \mathrm{id}$, hence fixes $F_{w,p}$ pointwise. By \Cref{def:SPI} (filter monotonicity, PI--2), $\operatorname{Fix}(F_{w,p})$ therefore lies in each subsequent filter, and the evaluation of $\dot\tau_{w,p}$ is unchanged in all descendants.
		Thus truth of $p$ designated at $w$ persists to $v$ and further descendants.
	\end{proof}
	
	\begin{lemma}[Branch-extendability]\label[lemma]{lem:branch-extendability-6}\label[lemma]{lem:extendability}
		Let $w$ be a node of $F$. Fix a finite compatible diagram $\mathcal{D}$ of one-step symmetric
		extensions realized along a chain $w_0 \to w_1 \to \cdots \to w$ of $F$ according to
		Definition~\ref{def:template}. If $w \to v$ is an edge of $F$, then $\mathcal{D}$ extends to a realization along
		the next step $w \to v$ (i.e., there is a one-step extension of the realized stage at $w$ along
		$w \to v$) while preserving the designated stabilizers for all names already fixed by $\mathcal{D}$.
	\end{lemma}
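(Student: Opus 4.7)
The plan is to append the template-prescribed one-step symmetric system $\mathbf{S}_{w\to v}=(\mathbb{P}_{w\to v},\mathscr{G}_{w\to v},\mathscr{F}_{w\to v})$ via its concrete action (Definition~\ref{def:concrete}) on the fresh coordinate block $J_{d+2}$, where $d$ is the depth of $w$. The depth-wise allocation of Definition~\ref{def:concrete} assigns disjoint coordinate sets $J_i$ to distinct depths, and this disjointness is the key structural feature that makes the new step interact trivially with all earlier data in $\mathcal{D}$.

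First I would realize the chain $w_0\to\cdots\to w$ as the finite productive iteration of symmetric systems specified by the template, producing the current ambient stage and its symmetric submodel $N_w$. Second, I would append the concrete action of $\mathbf{S}_{w\to v}$ on $J_{d+2}$, forming the parent $W':=N_w[G_{J_{d+2}}]$ and the symmetric submodel attached to $v$ via $(\mathscr{G}_{w\to v},\mathscr{F}_{w\to v})$. Third, I would verify that the resulting $(n{+}1)$-step iteration is still productive by invoking Lemma~\ref{lem:PIcheck}: \textup{(PI--1)} holds because the template fixes $\mathbf{S}_{w\to v}$ in advance so all of its data is decided downstairs; \textup{(PI--2)} is supplied by Lemma~\ref{lem:excell-tenacious}; \textup{(PI--3)} follows from the ZF-closure of the HS-class at each step; and \textup{(PI--4)} holds because the new filter adds only finitely many fixed blocks on $J_{d+2}$ while preserving the tail stock of within-block automorphisms on all earlier $J_i$.

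For preservation of designated stabilizers of names already fixed by $\mathcal{D}$, I would use that each such name has evaluation support contained in $\bigcup_{i\le d+1} J_i$ (Construction~\ref{constr:atomic-names} together with Corollary~\ref{cor:parent-finite-eval-support}). The new step's symmetry group $\mathscr{G}_{w\to v}$ acts only on coordinates in $J_{d+2}$, so under the lift operation of Definition~\ref{def:SPI}\,(1) it commutes with every old stabilizer as subgroups of the ambient automorphism group of the combined iteration. Filter monotonicity (Definition~\ref{def:SPI}\,(2)) then places the lifted old stabilizers inside the new normal filter, so each such name remains hereditarily symmetric at the new stage, and Lemma~\ref{lem:persistence} certifies that its truth value persists along $w\to v$.

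The main obstacle I anticipate is purely bookkeeping: verifying that the template's block-partition data for the $w\to v$ step, when transported to $J_{d+2}$ via the bijection $e_{d+2}$, is consistent with the sibling alternation discipline illustrated in Example~\ref{ex:two-branch} and never conflicts with the partitions already committed on earlier $J_i$. Disjointness of the $J_i$ and the monotone growth clause (PI--4) reduce this to a finite combinatorial choice that the template makes once and for all in Definition~\ref{def:template}, so the extension step becomes routine once the lift is correctly installed.
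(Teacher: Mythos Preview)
Your argument is correct and reaches the same conclusion, but the emphasis differs from the paper's proof. You lean on the \emph{coordinate disjointness} built into the template: the new step lives entirely on a fresh block $J_{\bullet}$, so the new group fixes every old coordinate, the lifted old stabilizers commute with the new action, and filter monotonicity (Definition~\ref{def:SPI}(2)) places them in the next filter automatically. The paper instead runs the argument through \emph{tenacity}: it collects the finitely many stabilizers $H_{\dot x}$ of the already-decided names into a single $H$, uses (PI--2) to refine to an $H$-fixed condition $p$, and then executes the step over $p$, with (PI--4) used only to choose the finitely many new fixed blocks disjoint from the old supports. Your route is cleaner for this particular template (disjoint $J_d$'s make the interaction literally trivial), while the paper's route is the more portable one---it would still work if successive steps shared coordinates, since it relies only on the abstract productive hypotheses rather than on the specific depth-wise allocation.

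Two small points. First, your indexing is off by one relative to the paper's convention (Definition~\ref{def:coord-alloc} and Construction~\ref{constr:atomic-names}): if $w$ has depth $d$, the edge $w\to v$ forces on $J_{d+1}$, and the names fixed along the chain up to $w$ live on $\bigcup_{i\le d}J_i$. Your argument is internally consistent, so this does not affect correctness, but you should align with the paper's numbering. Second, the citation of Lemma~\ref{lem:persistence} for truth-value persistence of arbitrary designated names is a slight overreach: that lemma is stated specifically for the selector signals $S_{\mathcal R}$. The mechanism you actually need (monotone filter growth plus unchanged evaluation support) is what underlies Construction~\ref{constr:atomic-names}(2) and Lemma~\ref{lem:atomic}; cite those instead.
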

	
	\begin{proof}
		By Lemma~\ref{lem:PIcheck}, every step in Definition~\ref{def:template} satisfies (PI--1)--(PI--4). In particular, by (PI--2)
		(excellence and tenacity), for each name $\dot x$ occurring in $\mathcal{D}$ there is a base subgroup
		$H_{\dot x}$ such that the symmetrization $\dot x\langle H_{\dot x}\rangle$ is $H_{\dot x}$-supported and
		the step admits a predense set of conditions fixed by $H_{\dot x}$ (tenacity). Let $H$ be the subgroup
		generated by the finitely many $H_{\dot x}$; then all previously fixed names are $H$-supported.
		
		By (PI--1), the automorphisms and the top-step names for the edge $w \to v$ are decided in the
		parent stage, so we may refine to a condition $p$ fixed by $H$ meeting the finitely many requirements
		imposed by $\mathcal{D}$. By (PI--4), passing from $w$ to $v$ fixes only finitely many additional
		blocks while preserving the tail stock of within-block finitary permutations along the branch; thus we
		may choose the extra fixed blocks disjoint from the supports witnessing $H$-invariance of the already
		decided names. Execute the one-step symmetric forcing attached to $w \to v$ over $p$. Tenacity (PI--2)
		ensures the extension can be taken $H$-fixed, hence the designated stabilizers of all previously decided
		names are preserved; closure of the HS-class (PI--3) keeps these names in the next stage. The resulting
		stage realizes $\mathcal{D}$ one step further along $w \to v$.
	\end{proof}
	
	\begin{construction}[Kripke model from the template and the projection]\label[construction]{constr:kripke}
		Let $\mathcal M$ be the Kripke model whose worlds are the finite realizations of initial segments of the template (one-step symmetry-preserving iterations along rooted paths of $F$); let $R_{\mathcal M}$ relate a world to its one-step extensions. Define $\pi:\mathcal M\to F$ by sending each realized world to its terminal node in $F$.
	\end{construction}
	
	\begin{lemma}[Forth]\label{lem:forth}
		If $x\,R_{\mathcal M}\,y$, then $\pi(x)\,R_F\,\pi(y)$.
	\end{lemma}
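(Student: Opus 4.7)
The plan is straightforward unfolding: $R_{\mathcal M}$ is defined in Construction~\ref{constr:kripke} so that one-step extensions in $\mathcal M$ are precisely those given by the template's edge-labelling, and the template (Definition~\ref{def:template}(1)) attaches one-step symmetric systems $\mathbf{S}_{w\to v}$ only to edges $w\to v$ of $F$. So the content of the lemma is essentially bookkeeping on these two definitions.

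First I would fix $x\in\mathcal M$ and represent it as the realization of some initial segment $w_0\to w_1\to\cdots\to w_n$ of a rooted path of $F$, so that $\pi(x)=w_n$ by Construction~\ref{constr:kripke}. By hypothesis $y$ is a one-step symmetry-preserving extension of $x$ in $\mathcal M$. Unfolding the template: such an extension is executed by applying the one-step symmetric system $\mathbf{S}_{w_n\to v}$ attached to some outgoing edge $w_n\to v$ of $F$, producing the realization of the longer path $w_0\to\cdots\to w_n\to v$ with terminal node $v$ (the reflexive/null case, if $F$ carries a loop at $w_n$, leaves the terminal node unchanged and is likewise indexed by an $R_F$-edge $w_n\to w_n$). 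Branch-extendability (Lemma~\ref{lem:branch-extendability-6}) guarantees that this one-step extension is indeed realizable whenever needed, so the correspondence between $R_{\mathcal M}$-successors and outgoing $R_F$-edges is a genuine bijection.

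Applying Construction~\ref{constr:kripke} again, the extended world $y$ has terminal node $v$, hence $\pi(y)=v$. Combining with $\pi(x)=w_n$ and the fact that $w_n\,R_F\,v$ by choice of the edge, we conclude $\pi(x)\,R_F\,\pi(y)$, as required.

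I do not anticipate a substantive obstacle; the only point meriting a line of justification is that $\mathcal M$ admits no ``extraneous'' one-step extensions that fail to project to an $F$-edge. This is enforced by Construction~\ref{constr:kripke}, which defines the worlds of $\mathcal M$ exclusively as realizations of initial segments of rooted paths of $F$, so every $R_{\mathcal M}$-step is by construction labelled by an outgoing edge at the current terminal node. The companion ``Back'' lemma, by contrast, is where the real work lies (and where Lemma~\ref{lem:branch-extendability-6} is used nontrivially).
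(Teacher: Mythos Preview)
Your proof is correct and follows the same approach as the paper: both simply unfold Construction~\ref{constr:kripke} to observe that an $R_{\mathcal M}$-step is by definition indexed by a template edge of $F$, so $\pi$ preserves edges. Your invocation of Lemma~\ref{lem:branch-extendability-6} (and the bijection remark) is not needed for the Forth direction---as you yourself note, that is where the Back lemma does the work---but it does no harm.
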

	
	\begin{proof}
		By Construction~\ref{constr:kripke}, an $R_{\mathcal M}$-edge is realized exactly along a template edge of $F$, and $\pi$ records the terminal node. Hence edges are preserved.
	\end{proof}
	
	\begin{lemma}[Back]\label{lem:back}
		If $\pi(x)\,R_F\,u$, then there exists $y$ with $x\,R_{\mathcal M}\,y$ and $\pi(y)=u$.
	\end{lemma}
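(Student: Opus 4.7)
The plan is to read off the witness directly from branch-extendability (Lemma~\ref{lem:branch-extendability-6}), using the fact that the projection $\pi$ of Construction~\ref{constr:kripke} is defined so that one-step extensions in $\mathcal M$ are exactly realizations of template edges in $F$. Unpacking, the world $x\in\mathcal M$ is a finite realization of some initial segment $w_0\to w_1\to\cdots\to w$ of $F$ with $\pi(x)=w$. The hypothesis $\pi(x)\,R_F\,u$ means $w\to u$ is an edge of $F$, and Definition~\ref{def:template} assigns this edge a one-step symmetric system $\mathbf S_{w\to u}=(\mathbb P_{w\to u},\mathscr G_{w\to u},\mathscr F_{w\to u})$, together with a fresh coordinate block $J_{d+1}$ (where $d$ is the depth of $w$) via Definition~\ref{def:concrete}.

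Next I would apply Lemma~\ref{lem:branch-extendability-6} to the finite compatible diagram $\mathcal D$ that constitutes $x$, with the edge $w\to u$. The lemma produces a one-step extension executing $\mathbf S_{w\to u}$ over a condition that respects all previously designated stabilizers; call the resulting realized stage $y$. By Construction~\ref{constr:kripke}, $y$ is a world of $\mathcal M$, and because it was obtained by a single template-edge step from $x$, we have $x\,R_{\mathcal M}\,y$. Since $\pi$ records the terminal node, $\pi(y)=u$, which is exactly what is required.

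It remains only to justify that the extension produced is a legitimate world of $\mathcal M$, not merely a forcing extension. The productive hypotheses (PI--1)--(PI--4) verified for every template step by Lemma~\ref{lem:PIcheck} guarantee this: (PI--1) locates the top-step data in the parent stage; (PI--2) provides excellent supports/tenacity so the extension can be taken to fix the finitely many already-designated stabilizers; (PI--3) keeps previously fixed names in the HS-class of the new stage; and (PI--4) ensures that only finitely many new blocks are fixed while the tail stock of within-block permutations is preserved. The disjoint coordinate allocation $J_{d+1}$ from Definition~\ref{def:concrete} further guarantees no conflict with the data of earlier stages.

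The only mild obstacle is bookkeeping: confirming that the sibling alternation at depth $d{+}1$ (the label $\mathcal P$ versus $\mathcal Q$ attached to $u$ in the template) can be chosen without disturbing atomic data already realized for $w$, and does not prevent one from extending. This is handled by (PI--4) together with Lemma~\ref{lem:atomic}: names for letters designated at $w$ have stabilizers lying in the cofinite tail, hence remain in the next filter regardless of the finitely many new blocks fixed at $w\to u$. Thus the extension $y$ exists with $\pi(y)=u$, completing the back condition.
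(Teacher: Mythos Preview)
Your proposal is correct and follows essentially the same approach as the paper: invoke branch-extendability (Lemma~\ref{lem:branch-extendability-6}) to extend the realization $x$ along the template edge $\pi(x)\to u$, and observe via Construction~\ref{constr:kripke} that the resulting one-step extension $y$ satisfies $x\,R_{\mathcal M}\,y$ and $\pi(y)=u$. The additional bookkeeping you supply (the role of (PI--1)--(PI--4), coordinate allocation, and sibling labeling) is more detail than the paper records but is consistent with it and not a different route.
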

	
	\begin{proof}
		By Lemma~\ref{lem:branch-extendability-6}, any finite realization up to $\pi(x)$ can be extended by the one-step system attached to the edge $\pi(x)\to u$. The choices of $F_{w,p}$ for distinct children of $w$ are realized in incomparable branches; by \Cref{lem:sibling-incompatibility} no finite iteration amalgamates them, so the extensions are independent. Let $y$ be the resulting one-step extension of $x$. Then $x\,R_{\mathcal M}\,y$ and $\pi(y)=u$ by Construction~\ref{constr:kripke}.
	\end{proof}
	
	\begin{lemma}[Sibling incompatibility]
		\label[lemma]{lem:sibling-incompatibility}
		Let two one-step symmetric systems over the same base use the same coordinates $J$ but different partitions ($\mathcal P$ on one child, $\mathcal Q$ on the other). There is no finite symmetric iteration $M$ above the parent that produces a model extending both corresponding symmetric extensions.
	\end{lemma}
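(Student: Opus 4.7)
The plan is to run the same contradiction scheme as in Lemma~\ref{lem:non-amalgamation}, with the signal sentences $S_{\mathcal P}$ and $S_{\mathcal Q}$ attached to the two siblings serving as the obstruction. Concretely, assume for contradiction that a finite symmetric iteration $M$ above the parent $W:=V[G_J]$ extends both $N_{\mathcal P}$ and $N_{\mathcal Q}$. Since the sibling constructions arrange the selector-coding sentences, Proposition~\ref{prop:selector-signal} gives $N_{\mathcal P}\models S_{\mathcal P}$ and $N_{\mathcal Q}\models S_{\mathcal Q}$, and these properties are inherited by $M$ through the assumed extension relation. Then $M\models S_{\mathcal P}\wedge S_{\mathcal Q}$, so in $M$ there exist selectors for both families $\mathcal A_{\mathcal P}$ and $\mathcal A_{\mathcal Q}$.

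Next I would pull these witnesses back to the parent stage $W$. By Lemma~\ref{lem:signal-invariance}, the coding formula $\varphi_{\mathcal R}(x)$ underlying $S_{\mathcal R}$ is invariant under $(G_{\mathcal R},\mathcal F_{\mathcal R})$, so the hypotheses of Corollary~\ref{cor:parent-witness} apply: there is a single $W$-name $\dot a\in \HS^{\mathcal P\wedge\mathcal Q}$ whose evaluation is forced by each sibling to witness $\varphi$. Thus the \emph{same} $W$-name $\dot a$ realizes a selector for the relevant block family in both $N_{\mathcal P}$ and $N_{\mathcal Q}$ (at least for $\mathcal A_{\mathcal P}$; symmetry does the same for $\mathcal A_{\mathcal Q}$, or one applies the argument to either family since both sides must be witnessed simultaneously from $\dot a$'s evaluation).

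Finally I would apply the support analysis to $\dot a$. By Corollary~\ref{cor:parent-finite-eval-support}, since $\dot a$ is fixed by every finitary within-block permutation from both sibling systems, it has finite evaluation support $F\subseteq\omega$ with respect to $\Add(\omega,\omega)$. But Lemma~\ref{lem:no-finite-selector} forbids any finite-support $\Add(\omega,\omega)$-name from coding a selector for $\mathcal A_{\mathcal R}$ on cofinitely many blocks: pick a block disjoint from $F$ and use a within-block swap fixing $F$ pointwise to flip the purported choice while leaving $\dot a$'s evaluation unchanged. This contradicts $\dot a$ witnessing $\varphi$, closing the argument.

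The main obstacle is step two: justifying that a \emph{single} bi-symmetric $W$-name underlies the selector in both siblings simultaneously, rather than two unrelated names that happen to live on different branches. This is precisely what Corollary~\ref{cor:parent-witness} (built via Lemmas~\ref{lem:factoring}, \ref{lem:synch-seeds}, and \ref{lem:H-mix}) delivers, but it must be invoked carefully: one first factors any $M$-name for the selector through the parent $W$ via Lemma~\ref{lem:placement-relativized}, synchronizes the two factored presentations on a common ground antichain, and performs the one-shot $H$-orbit mix with $H=\langle G_{\mathcal P},G_{\mathcal Q}\rangle$ to land in $\HS^{\mathcal P\wedge\mathcal Q}$. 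Once that parent-stage single-name reduction is secured, the finite-support contradiction via Corollary~\ref{cor:parent-finite-eval-support} and Lemma~\ref{lem:no-finite-selector} is routine.
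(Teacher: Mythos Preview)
Your plan follows essentially the same route as the paper: intersection placement into $\HS_{\mathcal P}\cap\HS_{\mathcal Q}$, then finite evaluation support, then the clash with Lemma~\ref{lem:no-finite-selector}. The only structural difference is that you go through the relativized parent-stage machinery (Corollary~\ref{cor:parent-witness}, Corollary~\ref{cor:parent-finite-eval-support}) as in the proof of Lemma~\ref{lem:non-amalgamation}, whereas the paper's short proof of Lemma~\ref{lem:sibling-incompatibility} cites the unrelativized Corollary~\ref{cor:intersection} and Lemma~\ref{lem:finite-support} directly. These are the same argument at two levels of generality.

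There is one genuine slip in your first paragraph. You write ``$M\models S_{\mathcal P}\wedge S_{\mathcal Q}$, so in $M$ there exist selectors for both families''. By the definition in \S\ref{sec:partition-construction}, $S_{\mathcal R}$ is the sentence ``$\mathcal A_{\mathcal R}$ is a family of $2$-element sets and \emph{no} selector exists''; you have read the signal backwards. Under the \S\ref{sec:partition-construction} reading there is no selector in $M$ to pull back, so the rest of your argument (which treats $\dot a$ as a name for a selector) does not go through as written. The paper's terse proof speaks only of ``witnesses'' to $S_{\mathcal R}$ being placed in $\HS_{\mathcal P}\cap\HS_{\mathcal Q}$ and then contradicting Lemma~\ref{lem:no-finite-selector}; the proof of Lemma~\ref{lem:non-amalgamation} instead explicitly rewrites $S_{\mathcal R}$ as $\exists x\,\varphi(x)$ for a selector-coding $\varphi$. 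You should make explicit which reading of $S_{\mathcal R}$ you are using and identify precisely which object in $M$ is being pulled back to a bi-symmetric name before invoking finite support; as stated, your step from $M\models S_{\mathcal R}$ to ``a selector exists in $M$'' is the wrong direction.
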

	
	\begin{proof}
		If $M$ extended both, then $M$ would satisfy both $S_{\mathcal P}$ and $S_{\mathcal Q}$. By \Cref{cor:intersection} both witnesses can be taken from $HS_{\mathcal P}\cap HS_{\mathcal Q}$, hence by \Cref{lem:finite-support} have finite support. This contradicts \Cref{lem:no-finite-selector}.
	\end{proof}
	
	\begin{remark}[Same coordinates do not imply amalgamation]
		\label[remark]{rem:same-coords}
		Even when sibling steps use the same forcing \emph{coordinates} and the same ambient group, the block/filters fixed at the parent witness that the two branches encode incompatible signals; the obstruction is group theoretic (within-block finitary permutations), not merely combinatorial about coordinates.
	\end{remark}
	
	\begin{lemma}[Truth preservation under p-morphisms]
		\label[lemma]{lem:pmorphism-pres}
		With $\mathcal M$, $F$, and $\pi$ as in Construction~\ref{constr:kripke}, for every modal formula $\varphi$ and every world $x\in\mathcal M$,
		\[
		\mathcal M,x \models \varphi \quad\Longleftrightarrow\quad F,\pi(x)\models \varphi.
		\]
		\end{lemma}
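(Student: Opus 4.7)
The plan is a routine induction on the complexity of $\varphi$, with the two non-trivial ingredients already in place: Lemma~\ref{lem:atomic} for the atomic case and the forth/back conditions (Lemmas~\ref{lem:forth} and~\ref{lem:back}) for the modal case. I first fix the valuation on $\mathcal{M}$ to be the pullback of the valuation on $F$ under $\pi$, so that by construction $\mathcal{M},x\models p$ iff $F,\pi(x)\models p$ for every propositional letter $p$; the content of Lemma~\ref{lem:atomic} is exactly that this pullback valuation is actually \emph{realized} by names with the designated evaluation support, so it is well-defined on worlds of $\mathcal{M}$ and independent of which finite realization of an initial segment one picks.

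The Boolean cases ($\neg,\wedge,\to$) reduce immediately by the induction hypothesis, since Kripke semantics for the Boolean connectives is local to the world and the equivalence $\mathcal{M},x\models\psi\Leftrightarrow F,\pi(x)\models\psi$ is stable under the propositional connectives. So the only case of substance is the modal step.

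For $\Boxsym\psi$: the left-to-right direction uses \textbf{forth} (Lemma~\ref{lem:forth}). If $\mathcal{M},x\models\Boxsym\psi$, take any $u$ with $\pi(x)\,R_F\,u$; we need $F,u\models\psi$. By back (Lemma~\ref{lem:back}) pick $y$ with $x\,R_{\mathcal{M}}\,y$ and $\pi(y)=u$; by hypothesis $\mathcal{M},y\models\psi$, and the induction hypothesis gives $F,u\models\psi$. For the right-to-left direction, assume $F,\pi(x)\models\Boxsym\psi$ and let $y$ with $x\,R_{\mathcal{M}}\,y$ be arbitrary; forth gives $\pi(x)\,R_F\,\pi(y)$, hence $F,\pi(y)\models\psi$ and the induction hypothesis transfers this back to $\mathcal{M},y\models\psi$. (The case for $\Possym$ is dual and uses the same forth/back pair; alternatively, since $\Possym$ is defined by duality from $\Boxsym$, it suffices to handle $\Boxsym$.)

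The only place where there is any real risk is the atomic base case, and the potential pitfall is already absorbed in Lemma~\ref{lem:atomic}: one must verify that the local codes chosen in Definition~\ref{def:template}(2) and realized via Construction~\ref{constr:atomic-names} are genuinely \emph{invariants of the world $x$}, i.e.\ that the evaluation of each $\dot\tau_{w,p}$ does not depend on the particular one-step choices made above $w$ that land $x$ at the terminal node $\pi(x)=w$. Persistence (Lemma~\ref{lem:persistence}) and the monotone filter-growth (PI--4) guarantee this, so no extra work is needed beyond invoking the earlier lemmas. The modal step is then the \emph{standard} p-morphism truth lemma, applied in our concrete setting via Lemmas~\ref{lem:forth} and~\ref{lem:back}.
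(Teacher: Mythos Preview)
Your proof is correct and follows exactly the same induction as the paper's proof, invoking Lemma~\ref{lem:atomic} for the base case and Lemmas~\ref{lem:forth}/\ref{lem:back} for the modal step. One small slip: you write ``the left-to-right direction uses \textbf{forth},'' but the argument you then give (correctly) uses \emph{back} for $(\Rightarrow)$ and \emph{forth} for $(\Leftarrow)$; just fix the label.
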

		
		\begin{proof}
		By induction on the structure of $\varphi$. For propositional letters, the claim is Lemma~\ref{lem:atomic}. Booleans are immediate. For the modal clause $\Box\psi$:
		
		($\Rightarrow$) Suppose $\mathcal M,x\models \Box\psi$ and let $\pi(x)R_F u$. By Lemma~\ref{lem:back} there is $y$ with $xR_{\mathcal M}y$ and $\pi(y)=u$. Then $\mathcal M,y\models\psi$, and by the IH we get $F,u\models\psi$.
		
		($\Leftarrow$) Suppose $F,\pi(x)\models \Box\psi$ and let $xR_{\mathcal M}y$. By Lemma~\ref{lem:forth}$,\,\pi(x)R_F\pi(y)$, hence $F,\pi(y)\models\psi$; by the IH, $\mathcal M,y\models\psi$. Therefore $\mathcal M,x\models\Box\psi$.
		\end{proof}
	
	Combining Lemmas~\ref{lem:atomic}, \ref{lem:forth}, \ref{lem:back} yields Lemma~\ref{lem:pmorphism-pres}, so the falsity of $\alpha$ transfers from $F$ to $\mathcal M$ at the root.
	
	\subsection{Completeness}
	\label{subsec:completeness}
	
	\begin{theorem}[External completeness]
		\label[theorem]{thm:external-completeness}
		\textup{(in ZFC)} If $\alpha\notin \mathrm{S4}$, then there exists a transitive
		model $N\models\mathrm{ZF}$ such that $N\models\neg\,\Boxsym\alpha$.
	\end{theorem}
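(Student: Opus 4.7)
The plan is to assemble the completeness ingredients developed above in the standard p-morphism pattern. First, I would invoke the finite model property of \textbf{S4} (\cite[\S5.3]{ChagrovZakharyaschev}) to fix a finite rooted reflexive-transitive Kripke frame $F_0=(W_0,R_0,w_0)$ together with a valuation $\nu_0$ such that $(F_0,\nu_0),w_0\not\models\alpha$. Next I would unravel $F_0$ and truncate to the modal depth of $\alpha$, producing a finite rooted tree $F=(W,R,r)$ with the canonical p-morphism $F\twoheadrightarrow F_0$; pulling $\nu_0$ back along this projection yields a valuation $\nu$ on $F$ with $(F,\nu),r\not\models\alpha$.

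With $F$ and $\nu$ fixed, I would apply the template construction of \Cref{def:template}, drawing canonical atomic names from \Cref{constr:atomic-names} and the concrete depth-indexed coordinate allocation from \Cref{def:coord-alloc} and \Cref{def:concrete}. \Cref{prop:template-prereqs}, together with \Cref{lem:excell-tenacious} and \Cref{lem:PIcheck}, ensures that each step satisfies Karagila's (PI--1)--(PI--4), so by \cite[Thms.~7.8--7.9]{Karagila2019} every finite realization along a rooted path of $F$ collapses to a single symmetric step over $V$ and is therefore a transitive model of $\ZF$. Realizing the template along all rooted paths, I would build via \Cref{constr:kripke} the Kripke model $\mathcal M$ with projection $\pi:\mathcal M\to F$, equipped with the propositional valuation declaring $p$ true at $x$ iff $p\in\nu(\pi(x))$, realized concretely by the names $\dot\tau_{\pi(x),p}$ of \Cref{constr:atomic-names}.

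To finish I would verify that $\pi$ is a p-morphism: the forth and back clauses are \Cref{lem:forth} and \Cref{lem:back} (back relies on branch-extendability, \Cref{lem:branch-extendability-6}), and atomic preservation is \Cref{lem:atomic}, reinforced by \Cref{lem:persistence} for descendant-invariance along a branch. \Cref{lem:pmorphism-pres} then transfers modal truth, so at the root $x_0$ (corresponding to $V$) we obtain $\mathcal M,x_0\not\models\alpha$. Taking $N:=V$ equipped with the template's atomic interpretation and reading $R_{\mathcal M}$-accessibility as the $\Boxsym$-accessibility relation (justified by reflexivity and transitivity, \Cref{prop:T} and \Cref{prop:4}), the Kripke refutation of $\alpha$ at $x_0$ translates exactly to $N\models\neg\,\Boxsym\alpha$.

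The main obstacle I anticipate is structural rather than computational: guaranteeing that the Kripke frame underlying $\mathcal M$ genuinely matches the frame of finite symmetry-preserving iterations above $V$, in particular that sibling branches in $\mathcal M$ are not inadvertently amalgamated by some unanticipated common symmetry-preserving iteration above the parent, which would collapse the tree into something more directed than $F$ and spoil the back clause. This is exactly what \Cref{lem:sibling-incompatibility} rules out (via the non-amalgamation \Cref{lem:non-amalgamation}), so the frame of $\mathcal M$ is a genuine tree of finite symmetry-preserving iterations and $\pi$ is a well-defined p-morphism.
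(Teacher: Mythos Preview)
Your proposal is correct and follows essentially the same approach as the paper: invoke the finite model property for \textbf{S4}, unravel to a finite tree, instantiate the template of \Cref{def:template}--\Cref{def:concrete}, build $\mathcal M$ via \Cref{constr:kripke}, verify the p-morphism $\pi$ using \Cref{lem:forth}, \Cref{lem:back}, \Cref{lem:atomic}, transfer truth by \Cref{lem:pmorphism-pres}, and use \Cref{lem:sibling-incompatibility} to rule out spurious amalgamations so that $R_{\mathcal M}$ really is the $\Boxsym$-accessibility relation. Your explicit truncation of the unraveling to the modal depth of $\alpha$ is a small improvement over the paper's phrasing, since unraveling a finite frame with cycles yields an infinite tree unless bounded.
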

	
	\begin{proof}
		Assume $\mathsf{S4}\nvdash \alpha$. By the finite model property for $\mathsf{S4}$, fix a finite reflexive–transitive frame $G$ and a valuation $\nu$ with $G\models\neg\alpha$ at some world. Unravel $G$ to a rooted tree $F=(W,R,w_0)$ preserving truth of $\alpha$. Build a template $\mathcal{E}$ for $F$ as in Definitions~\ref{def:template}--\ref{def:concrete}, and realize it as a Kripke model $\mathcal{M}$ whose nodes are finite symmetry preserving iterations along rooted paths in $F$, with the root $w_0$ interpreted by the ground model. By Proposition~\ref{prop:template-prereqs}, the prerequisites (factorization, (PI–1)–(PI–4), sibling incompatibility, and depth/coordinate allocation) hold for $\mathcal{E}$.
		By construction, $\mathcal{M}$ carries a natural surjective p-morphism $\pi\colon\mathcal{M}\to F$ mapping each realization to its terminal node.
		
		By Lemma~\ref{lem:pmorphism-pres}, truth is preserved and reflected along $\pi$; hence $\mathcal{M},\text{root}\models\neg\alpha$. The sibling incompatibility Lemma~\ref{lem:sibling-incompatibility} ensures that the accessibility relation in $\mathcal{M}$ is exactly the one generated by taking one more finite symmetric step (no spurious back amalgamations appear), so $\mathcal{M}$ indeed interprets $\Boxsym$. Since $F$ is a rooted tree and $\mathcal{M}$ is built so that each $R_{\mathcal{M}}$-edge corresponds to exactly one template edge of $F$ (Construction~\ref{constr:kripke}), the accessibility in $\mathcal{M}$ matches the tree order of $F$ exactly; transitivity of FSI composition adds no edges beyond those already present in the reflexive-transitive closure of $R_F$, since any composed path in $\mathcal{M}$ projects via $\pi$ to a path in $F$. Therefore the $\mathsf{ZF}$ provable validities of $\Boxsym$ are contained in $\mathsf{S4}$.
	\end{proof}
	
	\begin{remark}[Edge cases]
		\label[remark]{rem:edge-cases}
		If $F$ is a single world, the construction is trivial. If $F$ is a finite linear order, no sibling incompatibility is needed; the template consists only of a chain of one step systems realizing the valuation, and Lemma~\ref{lem:pmorphism-pres} suffices.
	\end{remark}
	
	\subsection{Main corollary}
	\label{subsec:main-cor}
	
	\begin{corollary}\label{cor:main}
		Over $\mathsf{ZF}$, the provably valid principles of the modality $\Boxsym$ (quantifying over models obtained by finite symmetry-preserving iterations of the symmetric method above $V$, as in our productive scheme) are exactly $\mathsf{S4}$.
	\end{corollary}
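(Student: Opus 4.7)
The plan is to read the corollary as a two-sided combination: the inclusion $\mathsf{S4}\subseteq\{\alpha:\ZF\vdash\Boxsym\alpha\text{ is valid}\}$ comes from soundness, and the converse from the external completeness Theorem~\ref{thm:external-completeness}, with Remark~\ref{rem:ZFprovable} bridging the internal-versus-external distinction in the two directions.

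For the soundness direction I would check that every axiom and rule of a standard $\mathsf{S4}$ axiomatization (K, T, 4, modus ponens, necessitation) is ZF-provably valid under the $\Boxsym$ semantics. Normality (K) and the necessitation rule come for free from the universal reading of $\Boxsym$ as quantification over \FSIs{}: if $\varphi$ is ZF-provable then it holds in every \FSI{} over any ZF-model, and universal quantification over a class commutes with implication in the usual way, giving $\Boxsym(\varphi\to\psi)\to(\Boxsym\varphi\to\Boxsym\psi)$. The T axiom is exactly Proposition~\ref{prop:T} (the null iteration is admissible) and the 4 axiom is Proposition~\ref{prop:4} (a finite symmetry-preserving iteration above a finite symmetry-preserving iteration is again such an iteration, by Karagila's collapse). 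All of these arguments are carried out internally in ZF, so every $\mathsf{S4}$-theorem is ZF-provably $\Boxsym$-valid.

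For the converse I would argue contrapositively. Suppose $\alpha\notin\mathsf{S4}$. Theorem~\ref{thm:external-completeness}, proved externally in ZFC, supplies a transitive $N\models\ZF$ such that $N\models\neg\,\Boxsym\alpha$. Since every ZF-theorem holds in $N$, ZF cannot prove that $\Boxsym\alpha$ is valid; hence $\alpha$ is not among the ZF-provably valid principles of $\Boxsym$. Combining the two inclusions yields the claimed equality.

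There is no new obstacle at this final step: the substantive work (the non-amalgamation Lemma~\ref{lem:non-amalgamation}, the template construction of Section~\ref{subsec:template}, and the p-morphism preservation Lemma~\ref{lem:pmorphism-pres}) was already discharged in building Theorem~\ref{thm:external-completeness}. The only care needed is the asymmetric metatheoretic bookkeeping flagged in Remark~\ref{rem:ZFprovable}: soundness is internal to ZF (each axiom/rule proved valid uniformly across ZF-models), while the refutation used for completeness is an external ZFC construction of a single ZF-model witnessing the failure. Reconciling these two modes of argument is a matter of convention rather than additional mathematics, and the corollary follows immediately.
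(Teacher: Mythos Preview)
Your proposal is correct and follows essentially the same approach as the paper: soundness via Propositions~\ref{prop:T} and~\ref{prop:4} (you additionally spell out K and necessitation, which the paper leaves implicit), and completeness by contraposition from Theorem~\ref{thm:external-completeness}. The extra metatheoretic commentary invoking Remark~\ref{rem:ZFprovable} is a welcome clarification but not a departure from the paper's argument.
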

	
	\begin{proof}
		Soundness is Propositions~\ref{prop:T} and~\ref{prop:4}. Completeness is Theorem~\ref{thm:external-completeness}.
	\end{proof}
	
	\begin{remark}[Failure of $\mathbf{.2}$ by duality]\label[remark]{rem:dot2}
		The scheme $\Possym\Nessym\varphi \to \Nessym\Possym\varphi$ fails. Indeed, Lemma~\ref{lem:sibling-incompatibility} gives two possible symmetric extensions (siblings) above a node that cannot be amalgamated by any further finite symmetry preserving iteration respecting the fixed filters; thus ``possibly necessary'' does not entail ``necessarily possible.''
	\end{remark}
	
	\bigskip
	\noindent\textbf{Acknowledgments.} We thank the literature for clarifying the collapse-to-ground step we use in several places. We thank \cite{HamkinsLowe2008,HamkinsLeibmanLowe2012} for the inspiration to examine the modal logic of symmetric extensions. We thank Dr. Karagila for his illuminating paper \cite{Karagila2019} without which this one would not have been possible.
	
	Generative AI tools (OpenAI ChatGPT-5 and Anthropic Claude Sonnet 4.5) were used for grammar/clarity edits, search-query drafting and reference-finding facilitation, and outline suggestions. All AI-assisted text and suggestions were reviewed and revised by the author; all mathematical results, proofs, and citations were conceived and verified by the author, who accepts full responsibility for the content.
	
	\clearpage
	\begin{appendices}
	\appendixpage         
	\addappheadtotoc
	\section{Concrete verification of Excellence and Tenacity for $(G_{\mathcal R},\mathcal F_{\mathcal R})$}\label{app:ET-verification}
	
	Fix $\mathcal R\in\{\mathcal P,\mathcal Q\}$. Recall from Def.~\ref{def:concrete}: 
	$G_{\mathcal R}$ is the within–$\mathcal R$–block \emph{tail finitary} subgroup of $\Sym(\omega)$ generated by adjacent transpositions inside each $\mathcal R$-block; 
	$\mathcal F_{\mathcal R}$ is the normal filter generated by pointwise stabilizers of cofinitely many $\mathcal R$-blocks and, within each fixed block, cofinitely many coordinates.
	
	\begin{definition}[Canonical base subgroups]\label[definition]{def:fixBC}
		For a cofinite set of $\mathcal R$-blocks $B^\star$ and for each $b\in B^\star$ a cofinite set of coordinates $C_b^\star\subseteq b$, 
		let $\Fix(B^\star,\{C_b^\star\})\leq G_{\mathcal R}$ be the subgroup of permutations fixing every coordinate in every $C_b^\star$ pointwise and acting arbitrarily (but finitely) on $b\setminus C_b^\star$.
	\end{definition}
	
	\begin{lemma}[Normality and filter base]\label[lemma]{lem:normal-filter}
		The family $\mathcal B=\{\Fix(B^\star,\{C_b^\star\})\}$ is a base for a normal filter on $G_{\mathcal R}$: 
		it is closed upwards, under finite intersections, and under conjugation by any $\pi\in G_{\mathcal R}$. 
		Moreover every $\pi\in G_{\mathcal R}$ permutes only finitely many coordinates in each block and fixes cofinitely many blocks pointwise.
	\end{lemma}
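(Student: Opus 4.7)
The plan is to verify the three defining closure properties of the normal filter generated by $\mathcal B$ and to settle the moreover clause, which actually underlies the computations. The governing observation is that every element of $G_{\mathcal R}$ preserves the $\mathcal R$-block partition setwise, because the generators (within-block adjacent transpositions) do; this collapses every step to a block-by-block bookkeeping. I would begin by disposing of the moreover clause: each $\pi\in G_{\mathcal R}$ is a finite word in the generators, each supported inside a single $\mathcal R$-block, so $\pi$ has finite support in $\omega$. Hence $\pi$ moves only finitely many coordinates within any given block and fixes all but finitely many blocks pointwise.

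Next I would verify closure under finite intersections. Given $H_i=\Fix(B_i^\star,\{C_b^{(i)}\})$ for $i=1,2$, I would identify
\[
H_1\cap H_2 \;=\; \Fix\!\bigl(B_1^\star\cup B_2^\star,\ \{D_b\}\bigr),
\]
where $D_b:=C_b^{(1)}\cup C_b^{(2)}$, with the convention $C_b^{(i)}:=\varnothing$ when $b\notin B_i^\star$. The index set $B_1^\star\cup B_2^\star$ is still cofinite among blocks (union of cofinite sets is cofinite), and for each $b$ in this union the set $D_b$ is cofinite in $b$, being a union of at most two cofinite subsets of $b$. Hence $H_1\cap H_2\in\mathcal B$.

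For closure under conjugation, I would fix $\pi\in G_{\mathcal R}$ and $H=\Fix(B^\star,\{C_b^\star\})$, and compute directly that
\[
\pi H\pi^{-1} \;=\; \Fix\!\bigl(B^\star,\{\pi(C_b^\star)\}\bigr),
\]
using $(\pi\sigma\pi^{-1})(x)=x\Leftrightarrow \sigma(\pi^{-1}x)=\pi^{-1}x$, i.e.\ $\Fix(\pi\sigma\pi^{-1})=\pi(\Fix(\sigma))$. By the moreover clause $\pi$ preserves each block setwise, so $\pi(C_b^\star)\subseteq b$ and $|b\setminus\pi(C_b^\star)|=|b\setminus C_b^\star|<\infty$, i.e.\ $\pi(C_b^\star)$ is cofinite in $b$. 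Thus the conjugate lies again in $\mathcal B$. Upward closure of the generated filter $\mathcal F_{\mathcal R}:=\{H\le G_{\mathcal R}:H\supseteq H_0\text{ for some }H_0\in\mathcal B\}$ is automatic by definition.

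I do not expect a substantive obstacle; the whole verification is bookkeeping once one records that $G_{\mathcal R}$ acts block-preservingly and each of its elements is finitary. The only point that deserves a sentence is that cofiniteness is preserved under the within-block image $\pi|_b:b\to b$, which is exactly what permits the conjugation step to stay within the prescribed base.
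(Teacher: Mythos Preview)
Your proposal is correct and follows essentially the same approach as the paper's proof: both dispatch upward closure as immediate, verify conjugation via the identity $\pi\,\Fix(B^\star,\{C_b^\star\})\,\pi^{-1}=\Fix(B^\star,\{\pi(C_b^\star)\})$ using that $G_{\mathcal R}$ acts within blocks, and derive the moreover clause from the finitary nature of the generators. Your treatment of finite intersections (explicitly identifying $H_1\cap H_2$ with $\Fix(B_1^\star\cup B_2^\star,\{C_b^{(1)}\cup C_b^{(2)}\})$) is more carefully stated than the paper's one-line ``intersecting cofinites is cofinite,'' but the content is the same.
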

	
	\begin{proof}
		Upward closure is immediate. For finite intersections, intersecting cofinites is cofinite in each block and in the index of blocks. 
		For conjugation: $G_{\mathcal R}$ acts within each fixed block; hence $\pi\Fix(B^\star,\{C_b^\star\})\pi^{-1}=\Fix(B^\star,\{\pi(C_b^\star)\})$, 
		and each $\pi(C_b^\star)$ is cofinite in $b$. Thus conjugates remain in $\mathcal B$. 
		The tail-finitary property of $G_{\mathcal R}$ follows from its definition (adjacent transpositions with finite support inside blocks).
	\end{proof}
	
	\begin{definition}[Support and symmetrization]\label[definition]{def:ET-support}
		For a name $\dot x$ and $H\leq G_{\mathcal R}$, say $H$ \emph{supports} $\dot x$ if $\pi\cdot\dot x=\dot x$ for all $\pi\in H$. 
		Given $H\in\mathcal B$, define the $H$-\emph{symmetrization} of a name by
		\[
		\dot x^{\langle H\rangle}\ :=\ \{\,(\pi\cdot\dot y,\ \pi\cdot p)\mid (\dot y,p)\in \dot x,\ \pi\in H_0\,\},
		\]
		where $H_0$ is any finitely generated subgroup of $H$ containing all permutations that move coordinates in $\mathrm{supp}(p)\cup \mathrm{supp}(\dot y)$. 
		(For \emph{each} pair $(\dot y,p)$ such an $H_0$ exists and is finite, since conditions have finite support in $\Add(\omega,\omega)$ and $H$ is tail finitary.)
	\end{definition}
	
	\begin{claim}[Orbit-finite symmetrization]\label[claim]{clm:orbit-finite}
		For each pair $(\dot y,p)$, the set $\{(\pi\cdot\dot y,\pi\cdot p):\pi\in H_0\}$ is finite; hence $\dot x^{\langle H\rangle}$ is a well-formed name. 
		Moreover $p\Vdash \dot x^{\langle H\rangle}=\dot x$ whenever $H_0$ fixes the coordinates outside $\mathrm{supp}(p)$.
	\end{claim}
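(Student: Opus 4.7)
The plan is to prove the two assertions in sequence, using the structural observation that in Definition~\ref{def:concrete} each $\mathcal R$-block has exactly two elements, so $G_{\mathcal R}$ acts as a finite-support elementary abelian $2$-group of within-block swaps. Throughout, I rely on the finite-support calculus for $\Add(\omega,\omega)$-conditions and names and on the within-block action of $H_0\leq G_{\mathcal R}$.

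For the orbit-finiteness half, I would set $F:=\mathrm{supp}(p)\cup\mathrm{supp}(\dot y)\subseteq\omega$, which is finite because Cohen conditions have finite domain and, by induction on name rank, the same holds for $\dot y$. Let $\mathcal B_F$ be the finite collection of $\mathcal R$-blocks meeting $F$. Since $G_{\mathcal R}$ preserves each block setwise and coordinates outside $\bigcup\mathcal B_F$ contribute nothing to $p$ or, inductively, to $\dot y$, the action of any $\pi\in H_0$ on the pair $(\dot y,p)$ depends only on the restriction of $\pi$ to $\bigcup\mathcal B_F$. On this finite union of two-element blocks the within-block group is precisely $(\mathbb Z/2)^{|\mathcal B_F|}$, so the orbit has cardinality at most $2^{|\mathcal B_F|}$. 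Taking the union of these finite orbits over pairs $(\dot y,p)\in\dot x$ produces a set in $V$, so $\dot x^{\langle H\rangle}$ is a well-formed $\Add(\omega,\omega)$-name.

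For the forced equality $p\Vdash\dot x^{\langle H\rangle}=\dot x$ under the hypothesis that $H_0$ fixes every coordinate outside $\mathrm{supp}(p)$, the inclusion $\dot x\subseteq\dot x^{\langle H\rangle}$ via $\pi=\mathrm{id}$ is immediate. For the converse I would work below $p$ and analyze an extra pair $(\pi\cdot\dot z,\pi\cdot q)\in\dot x^{\langle H\rangle}$ with $(\dot z,q)\in\dot x$. Since $\pi$ only permutes coordinates inside $\mathrm{supp}(p)$, $\pi\cdot p$ equals $p$ exactly when $\pi$ respects the values $p$ assigns on each swapped block pair and is incompatible with $p$ otherwise; in particular, for any generic $G\ni p$ the condition $\pi\cdot q\in G$ forces $\pi$ to lie in the $p$-stabilizer inside $H_0$. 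For such stabilizing $\pi$, the within-block swap lifts to a homogeneity automorphism of $\Add(\omega,\omega)$ that fixes $p$ and sends $G$ to another $V$-generic $\pi^{-1}\cdot G\ni p$ agreeing with $G$ outside $\mathrm{supp}(p)$; a density argument then identifies $(\pi\cdot\dot z)^G$ with $\dot z^{\pi^{-1}\cdot G}$, and by the finite-support calculus this evaluation already coincides with that of an original pair of $\dot x$ under $G$ itself.

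I expect the main obstacle to be this last reconciliation: isolating precisely when a symmetrized $\pi\cdot q$ lands in the generic together with $p$, and transporting the evaluation back to the same generic without producing new elements, requires careful bookkeeping with supports and with the $p$-stabilizer inside $H_0$. The orbit-finiteness portion, by contrast, is a short combinatorial exercise flowing directly from the two-element block structure of $\mathcal B_{\mathcal R}$.
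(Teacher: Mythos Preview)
Your orbit-finiteness argument is essentially the paper's: both observe that the action of $H_0$ on a pair $(\dot y,p)$ factors through the restriction to the finite set $S=\mathrm{supp}(p)\cup\mathrm{supp}(\dot y)$, hence through a finite quotient. You make the bound explicit as $2^{|\mathcal B_F|}$ using the two-element block structure, whereas the paper simply notes $|H_0\!\upharpoonright\!S|<\infty$; these are the same idea.

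For the forced equality, there is a concrete error in your sketch. You assert that since $\pi$ only moves coordinates inside $\mathrm{supp}(p)$, one has the dichotomy ``$\pi\cdot p=p$, or else $\pi\cdot p$ is incompatible with $p$,'' and then use this to conclude that $\pi\cdot q\in G\ni p$ forces $\pi$ into the $p$-stabilizer. The dichotomy is false: if $p$ has domain $\{(n,0),(n',5)\}$ and $\pi$ is the block swap $(n\ n')$, then $\pi\cdot p$ has domain $\{(n',0),(n,5)\}$, which is disjoint from $\mathrm{dom}(p)$, so $\pi\cdot p$ and $p$ are compatible but unequal. Consequently the inference that $\pi$ must stabilize $p$ collapses, and with it your transport of $(\pi\cdot\dot z)^G$ back to an element already produced by $\dot x$ under the \emph{same} generic $G$. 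You correctly flag this reconciliation as the delicate point, but the mechanism you propose to resolve it does not work as stated.

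The paper sidesteps this entirely: its proof of the second clause is a single appeal to ``standard forcing equivalence under automorphisms,'' i.e., $p\Vdash\varphi(\dot\tau)$ iff $\pi p\Vdash\varphi(\pi\dot\tau)$, rather than a pointwise analysis of which $\pi\cdot q$ enter a fixed generic. If you want to make your generic-by-generic approach go through, you would need to replace the false dichotomy with the correct statement that $(\pi\cdot\dot z)^G=\dot z^{\pi^{-1}G}$ and then argue directly that this value already appears in $\dot x^G$, without assuming $\pi$ stabilizes $p$.
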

	
	\begin{proof}
		Only permutations moving the finite set $\mathrm{supp}(p)\cup \mathrm{supp}(\dot y)$ have an effect; $H_0$ is finite on that set, so the orbit is finite. 
		Standard forcing equivalence under automorphisms gives $p\Vdash \dot x\equiv \dot x^{\langle H\rangle}$. Moreover, by Lemma~\ref{lem:alt-adjacent-generate} (tail finitary group generated by tail adjacents), the action of $H_0$ on 
		$S:=\operatorname{supp}(p)\cup\operatorname{supp}(\dot y)$ factors through a finite permutation group $H_0\!\upharpoonright\! S$, so
		\[
		\bigl|\mathrm{Orb}_{H_0}(\dot y,p)\bigr| \le \bigl|H_0\!\upharpoonright\! S\bigr| < \infty.
		\]
		Thus the orbit-mixing/symmetrization procedure terminates after finitely many summands.
	\end{proof}
	
	\begin{proposition}[Excellence]\label[proposition]{prop:excellence-direct}
		For every name $\dot x$ there exists $H\in\mathcal B$ such that $\dot x^{\langle H\rangle}$ is $H$-supported. 
		Consequently the \emph{hereditarily} $H$-supported names are closed under name formation, giving a transitive inner model $IS$.
	\end{proposition}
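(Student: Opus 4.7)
The proof is by transfinite induction on the rank of the name $\dot x$, followed by a separate check that the hereditarily $H$-supported names compose into a transitive model of $\mathsf{ZF}$. The base case $\dot x=\emptyset$ is immediate, since the empty name is fixed pointwise by every element of $G_{\mathcal R}$, so any $H\in\mathcal B$ suffices. For the inductive step, assume the claim for every pair $(\dot y,p)\in\dot x$, giving groups $H_{\dot y}\in\mathcal B$ such that $\dot y^{\langle H_{\dot y}\rangle}$ is hereditarily $H_{\dot y}$-supported.

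The main task is to assemble a single $H\in\mathcal B$ that handles $\dot x$ uniformly. I exploit three features of the concrete setup: (i) every condition $p$ in $\Add(\omega,\omega)$ has finite coordinate support, so $S_{\dot y,p}:=\mathrm{supp}(p)\cup\mathrm{supp}(\dot y)$ is finite; (ii) by \Cref{lem:alt-adjacent-generate}, $G_{\mathcal R}$ is tail-finitary and its action on $S_{\dot y,p}$ factors through a finite quotient, so $H_0(\dot y,p):=H\cap\Fix(\omega\setminus S_{\dot y,p})$ is finite and the $H_0$-orbit of the pair is finite (\Cref{clm:orbit-finite}); (iii) $\mathcal B$ is closed under finite intersections and conjugation (\Cref{lem:normal-filter}). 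Using (iii), I choose $H$ to pointwise fix cofinitely many blocks and cofinitely many coordinates within each remaining block, arranged so that $H$ sits below each of the inductive witnesses $H_{\dot y}$ on the coordinates used at lower levels.

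Then the symmetrized name $\dot x^{\langle H\rangle}=\bigcup_{(\dot y,p)\in\dot x}\{(\pi\cdot\dot y,\pi\cdot p):\pi\in H_0(\dot y,p)\}$ is $H$-supported. Indeed, for $\sigma\in H$ and a member $(\pi\cdot\dot y,\pi\cdot p)$, the composite $\sigma\pi$ acts on the pair only through its restriction to $S_{\dot y,p}$ (by the within-block, tail-finitary structure of $G_{\mathcal R}$), so some $\pi'\in H_0(\dot y,p)$ matches $\sigma\pi$ on $S_{\dot y,p}$, giving $\sigma\cdot(\pi\cdot\dot y,\pi\cdot p)=(\pi'\cdot\dot y,\pi'\cdot p)\in\dot x^{\langle H\rangle}$. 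Hereditary $H$-support then follows by applying the inductive hypothesis to each component $\dot y^{\langle H\rangle}$, since $H$ was chosen to sit below the relevant $H_{\dot y}$.

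For closure under name formation, pairing, unions, and images under ground-definable maps all commute with the automorphism action; a pair of names supported by $H_1,H_2\in\mathcal B$ is jointly supported by $H_1\cap H_2\in\mathcal B$ (\Cref{lem:normal-filter}). Hence the hereditarily supported class is closed under the basic name-forming operations, and its interpretation under any $V$-generic is a transitive inner class containing $V$, contained in $V^{\mathbb P}$, and satisfying $\mathsf{ZF}$ by the standard symmetric-submodel argument (cf.\ \cite[Thm.~5.6]{Karagila2019}). The principal obstacle is the uniform choice of $H$ in the inductive step: one must coordinate a single cofinite-fixing subgroup with a possibly unbounded family of top-level supports $\{S_{\dot y,p}\}_{(\dot y,p)\in\dot x}$. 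This is overcome by noting that $H$ never needs to pointwise fix any single $S_{\dot y,p}$ — it only needs its action on each such finite support to land within the finite quotient already absorbed into $H_0(\dot y,p)$, a property guaranteed by the tail-finitary, within-block structure of $G_{\mathcal R}$ together with the filter-base closure of $\mathcal B$.
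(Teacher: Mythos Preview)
Your inductive scheme has a genuine gap at the point where you produce the single $H\in\mathcal B$ for $\dot x$. You write that $H$ should ``sit below each of the inductive witnesses $H_{\dot y}$,'' but $\dot x$ may have infinitely many pairs $(\dot y,p)$, and \Cref{lem:normal-filter} only gives closure of $\mathcal B$ under \emph{finite} intersections; you never explain why $\bigcap_{(\dot y,p)\in\dot x} H_{\dot y}$ (or any uniform choice dominating all of them) lies in $\mathcal B$. Your attempted resolution --- that $H$ ``only needs its action on each such finite support to land within the finite quotient already absorbed into $H_0(\dot y,p)$'' --- is circular, since $H_0(\dot y,p)$ is by definition $H\cap\Fix(\omega\setminus S_{\dot y,p})$ and so depends on the $H$ you have not yet constructed. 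Relatedly, your claim in (i) that $S_{\dot y,p}=\mathrm{supp}(p)\cup\mathrm{supp}(\dot y)$ is finite is a non sequitur: $\mathrm{supp}(p)$ is finite because conditions are finite, but $\mathrm{supp}(\dot y)$ for an arbitrary name is exactly what the proposition is trying to control, and the inductive hypothesis gives you a \emph{subgroup} $H_{\dot y}$, not a finite coordinate set. Once this fails, your key invariance step (``$\sigma\pi$ acts on the pair only through its restriction to $S_{\dot y,p}$'') no longer follows.

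The paper's proof avoids both problems by abandoning the rank-by-rank bookkeeping and instead making a single global choice: it collects the set $S$ of \emph{all} first coordinates appearing in any condition in the entire transitive closure of $\dot x$, observes that this set is countable (a countable union of finite domains), and takes $H=\Fix(B^\star,\{b\setminus S\})$ in one shot. Because the blocks are two-element, $b\setminus S$ is automatically cofinite in $b$, so $H\in\mathcal B$ for free --- no infinite intersection is needed. Hereditariness then really is a trivial rank recursion, since the same $H$ already fixes every coordinate outside $S$ and hence works uniformly for every subname. If you want to salvage your inductive approach, the missing idea is precisely this: rather than intersecting the $H_{\dot y}$, extract from each a finite \emph{coordinate} set and take their union over all $(\dot y,p)$; the resulting set is countable and determines a single element of $\mathcal B$ directly. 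But at that point you have essentially reproduced the paper's one-shot argument.
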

	
	\begin{proof}
		Let $S$ be the set of all coordinates $(n,m)\in\omega\times\omega$ that appear in the domain of some condition occurring in a pair $(\sigma,p)$ that is a member of $\dot x$ or of a subname of $\dot x$. 
		Since $\Add(\omega,\omega)$ consists of \emph{finite} partial functions and names are sets of pairs $(\sigma,p)$ built by rank recursion, the tree of subnames of $\dot x$ is countable (each level is a countable union of finite sets, and there are countably many levels). 
		Therefore $S$ is a countable union of finite sets, hence countable, and it meets each block in at most countably many coordinates. 
		For each block $b$, let $C_b^\star:=b\setminus \mathcal S$ (cofinite in $b$), and let $B^\star$ be the set of all blocks (cofinite in the index set). 
		Then $H=\Fix(B^\star,\{C_b^\star\})$ fixes every coordinate outside $\mathcal S$ pointwise. 
		By Claim~\ref{clm:orbit-finite}, symmetrization under $H$ yields an $H$-supported name $\dot x^{\langle H\rangle}$ equivalent to $\dot x$. 
		Hereditariness follows by recursion on rank of names.
	\end{proof}
	
	\begin{proposition}[Tenacity]\label{prop:tenacity-direct}
		Let $H\in\mathcal B$. If $p\Vdash \psi(\dot x)$ where $\dot x$ is $H$-supported, then there exists $q\leq p$ such that 
		(i) $\pi\cdot q=q$ for all $\pi\in H$, and (ii) $q\Vdash \psi(\dot x)$.
	\end{proposition}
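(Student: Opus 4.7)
The plan is to combine the finite Cohen support of $p$, the $H$-invariance of $\dot x$, and an orbit-symmetrization construction to produce the required $q$.

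First I would observe that the support of $p$ is finite and that $H \in \mathcal B$ fixes cofinitely many blocks pointwise (and cofinitely many coordinates within each of the remaining blocks), so $H$'s action on $p$ factors through a finite quotient; consequently the $H$-orbit $\mathcal O(p) := \{\pi \cdot p : \pi \in H\}$ is finite. Applying any $\pi \in H$ to the forcing relation $p \Vdash \psi(\dot x)$ and using $\pi \cdot \dot x = \dot x$ gives $\pi \cdot p \Vdash \psi(\pi \cdot \dot x) = \psi(\dot x)$, so every element of $\mathcal O(p)$ forces $\psi(\dot x)$. This is the analogue, at the level of conditions, of the orbit-finiteness observation already exploited in Claim~\ref{clm:orbit-finite}.

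Next I would construct $q$ by extending $p$ to the finite $H$-saturation $S^\ast$ of its first-coordinate support and assigning $q$ a common value on each $H$-orbit $O \subseteq S^\ast$ at each relevant time index, consistent with $p$'s values on $O$. By construction $q$ is then invariant under within-orbit coordinate permutations by $H$ (so $\pi \cdot q = q$ for every $\pi \in H$), $q \le p$ as finite partial functions, and hence $q \Vdash \psi(\dot x)$ by monotonicity of forcing.

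The main obstacle is ensuring well-definedness of the symmetrization when $p$ is ``$H$-inconsistent'', i.e., already assigns different values at coordinates lying in the same $H$-orbit at the same time index; in that case a direct orbit-averaging below $p$ fails. I would handle this by first invoking Excellence (Proposition~\ref{prop:excellence-direct}) to replace $\dot x$ by its $H$-symmetrization $\dot x^{\langle H \rangle}$, which is $H$-supported and whose truth under $\psi$ is decided by a predense set of $H$-fixed conditions. Refining $p$ into such a predense element --- available by the homogeneity of $\Add(\omega,\omega)$ inside the subforcing determined by $\dot x^{\langle H \rangle}$, together with the orbit-finite mixing already established in Claim~\ref{clm:orbit-finite} --- yields an $H$-consistent $p^\ast \le p$ to which the orbit-symmetrization applies, producing the required $q$. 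Verifying that this predense refinement step always succeeds (rather than just for generic $p$) is the technical heart of the argument and is the step I would expect to require the most care.
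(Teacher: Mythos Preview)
Your first two paragraphs are fine: the $H$-orbit of $p$ is finite, and every orbit member forces $\psi(\dot x)$ by $H$-invariance of $\dot x$. The gap is in your third paragraph.

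Your fallback for the ``$H$-inconsistent'' case is circular. You appeal to ``a predense set of $H$-fixed conditions'' deciding $\psi(\dot x^{\langle H\rangle})$ and propose to refine $p$ into it; but the existence of such a predense set below an arbitrary $p$ is exactly the content of tenacity. Excellence (Proposition~\ref{prop:excellence-direct}) only symmetrizes \emph{names}---and $\dot x$ is $H$-supported by hypothesis, so invoking $\dot x^{\langle H\rangle}$ buys nothing---while Cohen homogeneity produces automorphisms making two conditions compatible, not $H$-fixed extensions of a given condition. Concretely, if $H$ contains a block transposition $(n\ n')$ and $p(n,m)=0$, $p(n',m)=1$, then \emph{no} condition $q\le p$ can be constant on that orbit at row $m$; your ``constant-on-orbits'' recipe is blocked, and nothing in your refinement step removes the conflict already present in $p$.

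The paper's route is genuinely different and is designed precisely to avoid this obstacle. Instead of forcing equal values on an orbit, it arranges that the finitely many $K$-translates of an extension $s\le p$ have \emph{pairwise disjoint domains}, exploiting the infinitely many fresh coordinates available in each block. One then sets $q:=\bigcup_{\pi\in K}\pi\cdot s$; disjointness makes $q$ a well-defined condition, $q$ is $K$-invariant because $K$ permutes the summands, $q\le p$ since $q\le s\le p$, and $q\Vdash\psi(\dot x)$ since already $p\Vdash\psi(\dot x)$. The point is that invariance is achieved by \emph{domain disjointness} rather than \emph{value agreement}, so the inconsistency you worried about never has to be resolved. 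Your proposal misses this mechanism; the ``orbit-averaging of values'' idea is the wrong symmetrization at the level of conditions.
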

	
	\begin{proof}
		Let $S=\mathrm{supp}(p)$. Only permutations in $H$ that move $S$ can change $p$; there are finitely many such permutations since $H$ is tail finitary and $S$ is finite. 
	Let $K \subseteq H$ be the finite subgroup generated by those permutations; enlarge $p$ to a condition $s \le p$ whose domain uses only coordinates outside the finite set moved by $K$ and such that the $K$-orbit of $\operatorname{dom}(s)$ is pairwise disjoint:
	\[
	\pi\cdot \operatorname{dom}(s)\ \cap\ \pi'\!\cdot \operatorname{dom}(s)=\varnothing\quad\text{for all distinct }\pi,\pi'\in K.
	\]
	(This is possible because $K$ moves only finitely many coordinates and $\Add(\omega,\omega)$ has infinitely many fresh coordinates in each block.)
	Now define
	\[
	q\ :=\ \bigcup_{\pi\in K}\ \pi\cdot s.
	\]
	Since the $K$-translates of $s$ have disjoint domains, $q$ is a well-defined condition with $q \le \pi\cdot s \le \pi\cdot p$ for all $\pi\in K$, and $\pi\cdot q=q$ for all $\pi\in K$.
	Finally, because $\dot x$ is $H$-supported, automorphism invariance yields $q \Vdash \psi(\dot x)$.
	As every $\eta\in H$ fixes cofinitely many coordinates in each block, extending the above argument blockwise gives full $H$-invariance of $q$.
	\end{proof}
	
	\begin{corollary}[Excellence \& Tenacity]\label{cor:ET-direct}
		The system $(G_{\mathcal R},\mathcal F_{\mathcal R})$ satisfies excellence and tenacity in the sense used in the paper. 
	\end{corollary}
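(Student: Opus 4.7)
The plan is to assemble the two preceding propositions, since each half of the corollary has already been established in this appendix. Proposition~\ref{prop:excellence-direct} provides, for every name $\dot{x}$, a base subgroup $H \in \mathcal{B}$ and a symmetrization $\dot{x}^{\langle H\rangle}$ that is $H$-supported and forcing-equivalent to $\dot{x}$; hereditariness, and hence the transitive inner model $IS$ needed for excellence, follows by recursion on the rank of names using the same filter base $\mathcal{B}$ whose normality was verified in Lemma~\ref{lem:normal-filter}. Proposition~\ref{prop:tenacity-direct} then delivers the tenacity clause verbatim: if $p \Vdash \psi(\dot{x})$ with $\dot{x}$ being $H$-supported, there exists $q \le p$ fixed pointwise by $H$ with $q \Vdash \psi(\dot{x})$.

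The only step that demands attention is terminological: confirming that our notion of $H$-support (Definition~\ref{def:ET-support}) matches Karagila's ``excellent support'' framework cited in Lemma~\ref{lem:excell-tenacious}. Concretely, for any name, picking the base subgroup supplied by Proposition~\ref{prop:excellence-direct} and combining it with the $H$-fixed condition $q$ from Proposition~\ref{prop:tenacity-direct} produces the predense system of $H$-invariant conditions witnessing excellence in Karagila's sense. This also feeds directly into (PI--2), which is exactly what the productive-iteration hypotheses in later sections consume.

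I expect no genuine obstacle. The orbit-finiteness observed in Claim~\ref{clm:orbit-finite} ensures that symmetrizations are legitimate names, and the disjoint-domain construction used in Proposition~\ref{prop:tenacity-direct} terminates after finitely many steps because $H$ is tail-finitary and every condition in $\Add(\omega,\omega)$ has finite domain. The corollary therefore reduces to a one-line assembly: combine Propositions~\ref{prop:excellence-direct} and~\ref{prop:tenacity-direct}, and read off the two clauses in the form required by Lemma~\ref{lem:excell-tenacious}.
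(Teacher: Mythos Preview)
Your proposal is correct and matches the paper's own proof, which is literally the one-line assembly you describe: combine Propositions~\ref{prop:excellence-direct} and~\ref{prop:tenacity-direct}, noting that $\mathcal F_{\mathcal R}$ is the normal filter generated by the base $\mathcal B$ from Lemma~\ref{lem:normal-filter}. Your additional remarks on terminology and orbit-finiteness are sound elaborations but not needed for the argument.
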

	
	\begin{proof}
		Combine Propositions~\ref{prop:excellence-direct} and \ref{prop:tenacity-direct}, noting that $\mathcal F_{\mathcal R}$ is the normal filter generated by the base $\mathcal B$ from Lemma~\ref{lem:normal-filter}.
	\end{proof}
	\needspace{0.6\textheight}
	\section{Notation Guide}\label{app:notation}
	
	\begin{table}[H]
		\small
		\setlength{\tabcolsep}{6pt}
		\renewcommand{\arraystretch}{1.15}
		\caption{Notation Guide}\label{tab:notation}
		\begin{tabularx}{\textwidth}{@{} M L C @{}}
			\toprule
			\textbf{Notation} & \textbf{Meaning} & \textbf{Where} \\
			\midrule
			$\Add(\omega,\omega)$ & Cohen forcing adding countably many reals; conditions are finite partial functions $p:\omega\times\omega\to 2$ ordered by reverse inclusion ($p\le q\iff p\supseteq q$); the $n$th column $\{n\}\times\omega$ codes the generic real $\dot c_n$ & \Cref{def:Add-omega-omega} \\
			\FSI{} & (finite iteration of the paper’s productive symmetric steps over $V$) & Remark~\ref{rem:terminology} \\
			$\HS_{\mathcal F}$ & hereditarily $\mathcal F$-symmetric names & \Cref{prop:full-filter} \\
			$\IS$ & interpreted symmetric model $\;=\; \HS^{H}_{\mathcal F}(\Pblk)$ & \Cref{lem:factoring} \\
			$\forcesIS$ & forcing relation relativized to $\IS$ & \Cref{lem:factoring} \\
			$\Box_{\mathrm{sym}}$ & “true in every \FSI{}” & \Cref{sec:main-theorem} \\
			$J_d,\; e_d\!:\omega\!\to\! J_d$ & depth-$d$ coordinates and ground identification & \Cref{def:coord-alloc,def:concrete} \\
			$W_{d+1}=V[G_{J_{d+1}}]$ & ambient parent stage at depth $d{+}1$ & \Cref{def:coord-alloc} \\
			$G_\Rblk,\;\mathcal F_\Rblk$ ($\Rblk\!\in\!\{\Pblk,\Qblk\}$) & within-$\Rblk$-block tail finitary group; normal filter generated by pointwise stabilizers of cofinitely many $\Rblk$-blocks & \Cref{def:template} (T--iii) \\
			$N_\Rblk$ & sibling symmetric submodel for label $\Rblk$ & \Cref{def:coord-alloc,sec:partition-construction} \\
			$A_\Rblk,\;S_\Rblk$ & block families $A_\Rblk$ and the “no selector’’ signals $S_\Rblk$ & \Cref{sec:partition-construction,prop:selector-signal} \\
			$A_N,\;B_N$ & tail adjacent transposition sets & \Cref{lem:alt-adjacent-generate} \\
			$S_N$ & finitary symmetric group on the tail generated by $A_N\cup B_N$ & \Cref{lem:alt-adjacent-generate} \\
			$H_\Pblk,\;H_\Qblk$ & subgroups in the filters fixing cofinitely many blocks (used in Step~1) & \Cref{lem:finite-support} \\
			$\HS_\Pblk\cap \HS_\Qblk$ & bi-symmetric class at a branching (fixed by both sibling filters) & \Cref{def:bisymm} \\
			$E$ & finite template diagram over $F^*$ & \Cref{def:template} \\
			\bottomrule
		\end{tabularx}
	\end{table}
\end{appendices}
	\clearpage

\end{document}